\newcommand{\mdeljoin}[1]{#1_{\overline{\Delta}}^{*2}}
\newcommand{\longJFQ}[1]{\mdeljoin{\overline{#1}}/(\Z/2\Z)}
\newcommand{\longJF}[1]{\mdeljoin{\overline{#1}}}
\DeclareMathOperator{\lk}{lk}
\DeclareMathOperator{\st}{st}
\DeclareMathOperator{\supp}{supp}
\newtheorem{theorem}{Theorem}
\newtheorem{lemma}[theorem]{Lemma}
\newtheorem{conjecture}[theorem]{Conjecture}
\newtheorem{question}[theorem]{Question}
\newtheorem{corollary}[theorem]{Corollary}
\newtheorem{definition}[theorem]{Definition}
\newtheorem{claim}[theorem]{Claim}
\newcommand{\Z}{\mathbb{Z}}
\newcommand{\N}{\mathbb{N}}
\newcommand{\R}{\mathbb{R}}
\newcommand{\Q}{\mathbb{Q}}
\renewcommand{\epsilon}{\varepsilon}
\title{Random complexes with free involution}
\author{Florian Frick}
\address[FF]{Dept.\ Math.\ Sciences, Carnegie Mellon University, Pittsburgh, PA 15213, USA}
\email{frick@cmu.edu} 
\author{Andrew Newman}
\address[AN]{Dept.\ Math.\ Sciences, Carnegie Mellon University, Pittsburgh, PA 15213, USA}
\email{anewman@andrew.cmu.edu} 
\thanks{FF was supported by NSF grant DMS 1855591, NSF CAREER Grant DMS 2042428, and a Sloan Research Fellowship.}
\date{\today}
\begin{document}

\maketitle
\begin{abstract}
\small
    We introduce a new model for random simplicial complexes which with high probability generates a complex that has a simply-connected double cover. Hence we develop a model for random simplicial complexes with fundamental group $\Z/2\Z$. We establish results about the typical asymptotic topology of these complexes. As a consequence we give bounds for the dimension~$d$ such that $\Z/2\Z$-equivariant maps from the double cover to $\R^d$ have zeros with high probability, thus establishing a random Borsuk--Ulam theorem. We apply this to derive a structural result for pairs of non-adjacent cliques in Erd\H os--R\'enyi random graphs.
\end{abstract}

\section{Introduction}
Probabilistic methods are used across numerous mathematical subfields to address questions about the typical properties of an object and to show the existence of objects with certain properties that are difficult to attain with a constructive approach. A constructive way of describing an object will necessarily introduce structure into the object, while sampling from a suitable probability space of objects will produce a representative that only has the amount of structure imposed by general Ramsey-theoretic results. Between the extremes of giving a short constructive description of an object and sampling from a space of all objects, is the problem of how to effectively sample from a space of objects that have some specific, desired structure. Here we address this question for random simplicial complexes that have precisely one non-trivial symmetry. 

The standard probability measures on simplicial complexes, such as the Linial--Meshulam--Wallach model, the random clique complex model, and the multiparameter model that interpolates between them, produce with high probability complexes without symmetry. Specifically \cite{BHK}, \cite{CostaFarberHorak}, and \cite{CFLargeComplexes2} respectively show that above a certain probability threshold each of these models produce complexes with trivial fundamental group. These complexes do not appear as quotients of symmetric complexes, while below the threshold the fundamental group behaves erratically, so that a desired symmetry for the universal cover cannot be imposed with probability bounded away from zero.

Our model is easily explained: For a (finite, simple) graph $G$, let $Z(G)$ be the simplicial complex on the same vertex set as $G$ with complete 1-skeleton (that is, include every edge) with higher-dimensional faces included according to the following rules:
\begin{itemize}
    \item A triangle $\sigma$ is included in $Z(G)$ if and only if an odd number of the edges of $\sigma$ belong to~$G$.
    \item A $d$-simplex $\sigma$ for $d \geq 2$ is included in $Z(G)$ if and only if all triangles of $\sigma$ belong to $Z(G)$.
\end{itemize}

Equivalently, the faces of $Z(G)$ correspond to subsets of the vertices of~$G$ that can be partitioned into two non-adjacent cliques. For example, if $G$ is a $5$-cycle, $Z(G)$ is the minimal triangulation of a M\"obius strip, whereas if $G$ is the complement of a $5$-cycle in the complete graph on six vertices, then $Z(G)$ is the minimal triangulation of the real projective plane~$\R P^2$. Our main results show that the twist that occurs in both of these examples is in fact a general phenomenon.

To this end we consider this construction as a new model for random simplicial complexes by taking $G$ to be an Erd\H{o}s--R\'{e}nyi random graph~$G(n,p)$, that is, a simple graph on $n$ vertices, where each possible edge appears independently with probability~$p$. 
In contrast to other random complex models, in our model we show that rather than having simple connectivity for sufficiently large~$p$, $Z(G)$ for $G \sim G(n, p)$ and $p$ large enough (but not too large) has fundamental group $\Z/2\Z$. 

We additionally study the homology groups of $Z(G)$ within this model and establish both vanishing and non-vanishing results for the different homology groups. We write $Z \sim Z(n,p)$ for the random model $Z(G)$ for $G \sim G(n,p)$. Our main result is the following:

\begin{theorem}\label{maintheorem}
For $d \geq 1$ and $1/(d + 1) < \alpha < 1/d$ with high probability $Z \sim Z(n, n^{-\alpha})$ has all of the following properties:
\begin{enumerate}
    \item $\pi_1(Z) = \Z/2\Z.$
    \item $\pi_k(Z) = 0$ for $2 \leq k \leq d$.
    \item $\pi_k(Z) \otimes \Q = 0$ for $d + 1 \leq k \leq 2d -2.$
    \item $H_{2d + 1}(Z) \neq 0$.
    \item $H_k(Z) = 0$ for all $k \geq 2d + 2.$
\end{enumerate}
\end{theorem}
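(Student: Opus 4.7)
The plan is to work with the natural double cover $\tilde Z$ of $Z(G)$ on vertex set $V(G) \times \{0,1\}$, whose faces are ordered pairs $(A,B)$ of disjoint cliques of $G$ with no $G$-edge between them; this is the complex $\longJF{G}$ in the paper's notation, and the $\Z/2\Z$-action swapping the two factors is free on $\tilde Z$, so $\tilde Z \to Z$ is a genuine topological double cover. Equivalently $\tilde Z$ is the flag complex on the graph on $V(G)\times\{0,1\}$ whose edges are the pairs $(v,i)(u,j)$ with $(i=j) \Leftrightarrow (vu \in G)$. Almost all the work will be in establishing the corresponding topological statements about $\tilde Z$ and then transferring.

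Parts (1) and (2) reduce to showing $\tilde Z$ is $d$-connected with high probability: for $k \geq 2$ covering space theory gives $\pi_k(\tilde Z) \cong \pi_k(Z)$, and $\pi_1(Z) \cong \Z/2\Z$ then follows from the deck group once $\tilde Z$ is simply connected. I would show simple connectivity by proving that every short cycle in the $1$-skeleton of $\tilde Z$ is filled by triangles, using that in the regime $p = n^{-\alpha}$ every edge of $\tilde Z$ whp lies in many triangles. Higher connectivity $\pi_k(\tilde Z) = 0$ for $2 \leq k \leq d$ would follow via Hurewicz together with a vanishing of integer homology in this range; a Garland-style argument based on the links $\lk_{\tilde Z}((v,i))$ — each of which is again a complex of the same form, built from $G \setminus \{v\}$ — combined with induction on $k$ should suffice in this threshold range.

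For (3), I would apply the transfer map for the free double cover to get $H_*(Z;\Q) \cong H_*(\tilde Z;\Q)^{\Z/2\Z}$ and show $\tilde Z$ is rationally $(2d-2)$-connected. This is more subtle because many pairs $(A,B)$ contribute potential cycles in this middle range; the cancellation needed for rational vanishing is likely established via a discrete Morse matching on the face poset of $\tilde Z$. For (4), I would produce a nontrivial $(2d+1)$-cycle from an induced odd cycle in $\bar G$: if $S \subseteq V(G)$ has $|S| = 2d+3$ and $\bar G[S]$ is an odd cycle, then $\bar G[S']$ is bipartite for every proper $S' \subsetneq S$ but not for $S$ itself, so the subcomplex of $Z$ on $S$ is exactly $\partial \Delta^{2d+2}$, a simplicial $(2d+1)$-sphere. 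Standard second-moment computations show such $S$ exist whp in the regime $1/(d+1) < \alpha < 1/d$, and non-triviality in $H_{2d+1}(Z)$ is argued by ruling out, with high probability, the existence of $(2d+2)$-chains in $Z$ bounding it.

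For (5), the key structural observation is that any face $A \cup B$ of $Z$ with $A, B$ nonempty cliques and no $G$-edges between satisfies: every subset of $A \cup B$ is again a face of $Z$ (one takes its own bipartition), so the full simplex on $A \cup B$ lies in $Z$. Writing the portion of $Z$ above dimension $2d+1$ as a union of such large simplices and controlling their intersection pattern via a nerve or spectral sequence argument should give $H_k(Z) = 0$ for $k \geq 2d+2$. The main obstacle I anticipate is (3): in the range $d+1 \leq k \leq 2d-2$ the expected number of potential cycles and the expected number of potential fillings are both large, and a careful matching must be constructed to produce the rational vanishing; (4) will also require genuine care since odd induced cycles of length $2d+3$ must not only exist but also be \emph{unbounded} by $(2d+2)$-chains, necessitating a dedicated probabilistic argument beyond expectation bounds.
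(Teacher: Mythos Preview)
Your overall architecture---work with the double cover $\widetilde Z$, establish its connectivity, and transfer---matches the paper. But several of your proposed mechanisms would not go through as written, and in a couple of places the paper's route is both different and substantially simpler.

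\textbf{Parts (1)--(2).} The critical gap is that Garland's method only yields \emph{rational} homology vanishing, so it cannot give $\pi_k(\widetilde Z)=0$ for $2\le k\le d$ via Hurewicz. The paper obtains genuine $d$-connectivity of $\widetilde Z$ by a nerve-lemma argument: cover $\widetilde Z$ by vertex stars and apply Bj\"orner's homotopy nerve theorem, checking inductively that $t$-fold intersections of stars are $(d-t+1)$-connected. The key input is that for any $2d$ vertices without an antipodal pair the common link is path-connected (a statement about the auxiliary random graph $H$), and that any $2d+1$ such vertices have a common neighbor; Meshulam's clique-complex connectivity result handles the non-bipartite intersections. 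Your claim that $\lk_{\widetilde Z}((v,i))$ is ``again a complex of the same form, built from $G\setminus\{v\}$'' is also not correct: the plus and minus sides of the link have different vertex sets (neighbors of $v$ on one side, non-neighbors on the other), so the self-similar induction you envision does not hold.

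\textbf{Part (3).} The paper does \emph{not} use a discrete Morse matching; it uses Garland's method here, but the substance lies in proving spectral-gap bounds for the random \emph{bipartite} graphs that arise as links of $(2d-1)$-faces in $\Pi_{d+1,d+1}$. This requires a Bilu--Linial discrepancy-to-spectrum conversion together with overwhelming-probability tail bounds, and a separate reduction (Lemma~\ref{GluingStep}) showing every $2d$-cycle is rationally homologous to one supported on $\Pi_{d+1,d+1}$. Your proposal leaves this entirely unspecified.

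\textbf{Parts (4)--(5).} Your odd-induced-cycle construction for (4) is correct in spirit and would produce a $(2d+1)$-sphere in $Z$, but proving it is not a boundary is exactly the hard step you flag, and the paper avoids it entirely: it simply uses $\beta_{2d+1}\ge f_{2d+1}-f_{2d}-f_{2d+2}$ and shows by face counting that $f_{2d+1}$ dominates both neighbors in this regime. For (5) the paper does not use a nerve or spectral sequence; it invokes Malen's result that $X(n,n^{-\alpha})$ is $(d+1)$-collapsible for $\alpha>1/(d+1)$ and shows (Lemma~\ref{LiftCollapsing}) that each elementary collapse in $X$ lifts to a sequence of collapses in $\widetilde Z$, so $\widetilde Z$ (and hence $Z$) collapses to a $(2d+1)$-dimensional complex.
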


Here homology groups are taken with integer coefficients. Theorem~\ref{maintheorem} fits in with an existing paradigm for random complexes in the sparse regime. For comparison we present the following result of Kahle about the random clique (or flag) complex model $X(n, p)$.
\begin{theorem}[{Kahle~\cite{KahleRandomClique, KahleRational}}]
\label{thm:kahle}
For $d \geq 3$ and $1/(d + 1) < \alpha < 1/d$ with high probability $X \sim X(n, n^{-\alpha})$ has all of the following properties:
\begin{enumerate}
    \item $\pi_1(X) = 0$.
    \item $\pi_k(X) = 0$ for $2 \leq k \leq \lfloor \frac{d}{2} \rfloor - 1$.
    \item $\pi_k(X) \otimes \Q = 0$ for $\lfloor \frac{d}{2} \rfloor \leq k \leq d - 1$.
    \item $H_{d}(X) \neq 0$.
    \item $H_{k}(X) = 0$ for all $k \geq d+1$.
\end{enumerate}
\end{theorem}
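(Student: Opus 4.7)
The plan is to establish the five properties in the order (5), (4), (1), (3), (2), combining first- and second-moment face counts, Garland's spectral-gap method, and the rational and integral Hurewicz theorems. The critical dimension $d$ arises because in the range $1/(d+1) < \alpha < 1/d$ the expected number of $(d+1)$-cliques in $G \sim G(n, n^{-\alpha})$ grows polynomially while larger cliques become sufficiently sparse that $d$ is the typical top dimension of the homotopy type of $X$.

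For part (5), the first moment shows that cliques of size exceeding $d+1$ are sparse enough that a local discrete-Morse / collapsing argument lets $X$ deformation retract onto a complex of dimension at most $d$, giving $H_k(X)=0$ for $k \geq d+1$. For (4), a second-moment calculation applied to isolated empty $d$-spheres --- boundaries of $(d+1)$-cliques that do not extend to $(d+2)$-cliques and whose link structure is controlled --- produces many linearly independent classes in $H_d(X)$, with the threshold $\alpha > 1/(d+1)$ being precisely where such spheres appear in abundance while being unlikely to bound. For (1), simple connectivity, every edge in the complete $1$-skeleton lies in many triangles once $p \gg n^{-1/2}$, which holds since $\alpha < 1/d \leq 1/3$ when $d \geq 3$; a local filling argument then contracts any $1$-cycle.

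For (3), I would apply Garland's method: the link of every $k$-face of $X$ is the clique complex of a smaller random graph whose edge density is close to the appropriate power of $p$, and with high probability its normalized Laplacian has spectral gap exceeding the Garland threshold $k/(k+1)$. A union bound over the $O(n^{k+1})$ faces of each fixed dimension yields $H^k(X;\Q) = 0$ for $2 \leq k \leq d-1$, and combining this with $\pi_1(X) = 0$ and the rational Hurewicz theorem, applied iteratively in $k$, gives $\pi_k(X) \otimes \Q = 0$ in the same range. For the stronger integral statement (2), one produces an explicit acyclic matching on the face poset of $X$ exhausting the low-dimensional part of the complex, showing that $X$ is integrally $(\lfloor d/2 \rfloor - 1)$-connected; integral Hurewicz then gives $\pi_k(X) = 0$ for $2 \leq k \leq \lfloor d/2 \rfloor - 1$. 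The narrower integral range reflects the point beyond which the discrete-Morse construction begins to leave critical cells that could contribute torsion, something the rational argument can ignore.

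The main obstacle I expect is the Garland spectral-gap step. One needs that each low-dimensional link, after appropriate normalization, has spectral gap exceeding $k/(k+1)$ with failure probability small enough that a union bound over all $O(n^{k+1})$ such links still succeeds. This requires uniform spectral concentration for Erd\H{o}s--R\'{e}nyi random graphs close to the sharp threshold at which such a spectral gap first appears, and the conditional moment computations underlying it, performed over links conditioned on the presence of a fixed face, are the most delicate part of the argument.
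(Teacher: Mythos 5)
Your outline for parts (5), (3), and (1) is consistent with Kahle's approach (as mirrored by Theorem~\ref{DZtheorem} in this paper): collapsibility onto a complex of dimension at most $d$, Garland's method for rational cohomology vanishing followed by iterated rational Hurewicz, and simple connectivity well past the $\alpha = 1/3$ threshold. But parts (4) and (2) contain genuine errors.

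For (4), the cycles you propose do not exist. In a clique complex every clique of the underlying graph is a filled simplex, so the boundary sphere of a $(d+1)$-clique (that is, of the simplex it spans) bounds that very face and is null-homologous, regardless of whether the clique extends to a larger one. There are no ``empty'' simplices or empty spheres in a flag complex, unlike in the Linial--Meshulam model. Kahle's actual nonvanishing argument, reproduced in this paper as Theorem~\ref{FvectorCounts}, is a Morse-inequality argument: for any complex $\beta_d \geq f_d - f_{d-1} - f_{d+1}$, and in the window $1/(d+1) < \alpha < 1/d$ one computes $\E f_i = \Theta\bigl(n^{\,i+1}\,p^{\binom{i+1}{2}}\bigr)$ and checks that the exponent is strictly unimodal in $i$ with peak exactly at $i = d$. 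Concentration for subgraph counts then gives $f_d \gg f_{d-1} + f_{d+1}$ with high probability, hence $\beta_d > 0$, without exhibiting any explicit cycle.

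For (2), the proof is not by discrete Morse theory. Kahle proves integral connectivity via Bj\"orner's Nerve Lemma together with Meshulam's theorem on clique complexes (Lemma~4.2 of \cite{KahleRandomClique}, whose equivariant analogue in this paper is Lemma~\ref{symmetricNerveLemma}): cover $X$ by closed vertex stars, show inductively that $t$-fold intersections of stars have the required connectivity, and transfer connectivity from the nerve, which is a simplex skeleton. This yields homotopical $(\lfloor d/2\rfloor - 1)$-connectivity directly, with no pass through Hurewicz or an acyclic matching. Your explanation of why the integral range stops near $\lfloor d/2 \rfloor$ is also wrong: it is not that a Morse matching starts producing torsion-bearing critical cells, but that the Nerve Lemma argument needs every set of roughly $2k$ vertices to have a nonempty, connected common neighborhood, which requires $\alpha < 1/(2k)$, hence $k \leq \lfloor d/2\rfloor - 1$ when $\alpha < 1/d$. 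Garland's method needs only spectral expansion of codimension-two links, so the constraint is $\alpha < 1/k$ and the rational vanishing reaches $k = d-1$. The factor-of-two gap is in the required codimension of the local condition, not a rational-versus-integral artifact of a Morse complex.
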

Theorem~\ref{thm:kahle}, therefore, establishes that there is some dimension $d$ depending on $\alpha$ so that all of the \emph{rational} homology lives in dimension $d$. This dimension is part of the \emph{critical dimension} phenomenon described for the multiparameter model in \cite{CFLargeComplexes3}. In the clique complex we also have that integer homology vanishes up to half of the critical dimension. Between half the critical dimension and the critical dimension itself, it is an important open problem to strengthen the rational homology vanishing statement to an integer homology vanishing statement. The stronger statement with integer homology groups vanishing is Kahle's Bouquet of Spheres conjecture.  

We prove our results by giving a model $\widetilde Z(2n,p)$ for random simplicial complexes $Z$ on $2n$ vertices that have a free involution, that is, a simplicial map $\iota\colon Z\to Z$ with $\iota\circ \iota$ is the identity and no point of the geometric realization of $Z$ is fixed by~$\iota$. The quotient by this free involution then yields a random complex sampled according to~$Z(n,p)$. Thus with high probability a complex sampled according to $\widetilde Z(2n,p)$ is the universal cover of~$Z(n,p)$ in the studied probability regime $p = n^{-\alpha}$, $0 < \alpha < 1$.

Borsuk--Ulam-type results give conditions for a simplicial complex $X$ with a free involution $\iota \colon X\to X$ such that for any continuous map $f\colon X\to\R^d$ there is a point $x\in |X|$ with $f(x) = f(\iota(x))$. The classical Borsuk--Ulam theorem states that any continuous map $f\colon S^d \to \R^d$ identifies antipodal points $x$ and $-x$ in~ the $d$-sphere~$S^d$. The Borsuk--Ulam theorem and its variants have found numerous applications across mathematics in combinatorics, discrete geometry, geometric topology, and functional analysis among others. 
With the random model $\widetilde Z(2n,p)$ for a complex with free involution, we can now ask whether a random space satisfies the Borsuk--Ulam theorem. We show:

\begin{theorem}
\label{thm:random-bu}
    For $d \geq 1$ and $1/(d + 1) < \alpha < 1/d$ with high probability a complex $\widetilde Z \sim \widetilde Z(2n, n^{-\alpha})$ with free involution $\iota\colon \widetilde Z\to \widetilde Z$ satisfies that for any continuous map $f\colon \widetilde Z \to \R^{d+1}$ there is an $x \in |\widetilde Z|$ with $f(x) = f(\iota(x))$.
\end{theorem}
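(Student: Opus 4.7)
The plan is to deduce the random Borsuk--Ulam statement from the topological information about $\widetilde Z$ supplied by Theorem~\ref{maintheorem}, combined with a standard equivariant obstruction-theoretic argument and the classical Borsuk--Ulam theorem. The strategy is to show that with high probability the $\Z/2\Z$-coindex of $\widetilde Z$ is at least $d+1$, i.e.\ that there is an equivariant map $\varphi\colon S^{d+1}\to \widetilde Z$ from the antipodal sphere, and then reduce to the classical theorem.

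First I would observe that $\widetilde Z$ is $d$-connected with high probability. By Theorem~\ref{maintheorem}(1), $\pi_1(Z) = \Z/2\Z$ whp, so $\widetilde Z$ is the universal cover and hence simply-connected. The covering projection $\widetilde Z\to Z$ then induces isomorphisms $\pi_k(\widetilde Z)\cong \pi_k(Z)$ for $k\geq 2$, and Theorem~\ref{maintheorem}(2) gives $\pi_k(Z)=0$ for $2\leq k\leq d$. Hence $\pi_k(\widetilde Z)=0$ for all $0\leq k\leq d$.

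Next I would build an equivariant $\varphi\colon S^{d+1}\to \widetilde Z$ by inducting up the antipodal CW structure on $S^{d+1}$ (one pair of antipodal cells in each dimension $0,1,\dots,d+1$). Fix any vertex $v_0$ of $\widetilde Z$ and send $\pm 1\in S^0$ to $v_0,\iota(v_0)$; given an equivariant map from $S^k$ to $\widetilde Z$ for $k\leq d$, extend it to a closed hemisphere $D^{k+1}\subset S^{k+1}$ (the obstruction lies in $\pi_k(\widetilde Z)=0$) and propagate to the antipodal hemisphere via $\iota$. Since $\widetilde Z$ is a simplicial complex with free simplicial involution, the required equivariant cellular framework is automatic.

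Finally I would close the argument by contradiction. Suppose $f\colon \widetilde Z\to \R^{d+1}$ satisfies $f(x)\neq f(\iota(x))$ for all $x\in|\widetilde Z|$. Then
\[
g\colon \widetilde Z\to S^d,\qquad g(x) \;=\; \frac{f(x)-f(\iota(x))}{\|f(x)-f(\iota(x))\|}
\]
is a continuous $\Z/2\Z$-equivariant map. Composing with $\varphi$ yields an equivariant map $S^{d+1}\to S^d$, contradicting the classical Borsuk--Ulam theorem. The main obstacle is really Theorem~\ref{maintheorem} itself; once the connectivity of $\widetilde Z$ is in hand, the Borsuk--Ulam consequence is a routine obstruction-theoretic exercise.
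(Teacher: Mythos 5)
Your proof is correct and takes essentially the same route as the paper: establish that $\widetilde Z$ is $d$-connected with high probability, deduce the existence of a $\Z/2\Z$-equivariant map $S^{d+1}\to\widetilde Z$, and then invoke the classical Borsuk--Ulam theorem. The paper cites Theorem~\ref{DZtheorem}(1) for the $d$-connectedness directly rather than rederiving it from Theorem~\ref{maintheorem} via the covering map, and it applies Borsuk--Ulam to $f\circ h\colon S^{d+1}\to\R^{d+1}$ rather than to the normalized difference map $g\circ\varphi\colon S^{d+1}\to S^d$, but these are only cosmetic variations of the same argument.
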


This follows easily from our main result, since in this probability regime $\widetilde Z \sim \widetilde Z(2n, p)$ is at least $d$-connected with high probability. In fact, the connectivity over the rationals of such a $Z$ is about twice as large, so we conjecture the following:

\begin{conjecture}
\label{conj:bu}
    For $d \geq 1$ and $1/(d + 1) < \alpha < 1/d$ with high probability a complex $\widetilde Z \sim \widetilde Z(2n, n^{-\alpha})$ with free involution $\iota\colon \widetilde Z\to \widetilde Z$ satisfies that for any continuous map $f\colon \widetilde Z \to \R^{2d+1}$ there is an $x \in |\widetilde Z|$ with $f(x) = f(\iota(x))$.
\end{conjecture}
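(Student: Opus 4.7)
The plan is to reformulate Conjecture~\ref{conj:bu} cohomologically and reduce it to a mod-$2$ strengthening of Theorem~\ref{maintheorem}(3). The statement that every continuous $f\colon \widetilde Z \to \R^{2d+1}$ admits an $x$ with $f(x) = f(\iota(x))$ is equivalent to the non-existence of a $\Z/2\Z$-equivariant map $\widetilde Z \to S^{2d}$, and this is implied by the non-vanishing of $w^{2d+1}$ in $H^{2d+1}(Z; \mathbb{F}_2)$, where $w \in H^1(Z; \mathbb{F}_2)$ is the pullback under the classifying map $c\colon Z \to B\Z/2\Z = \R P^\infty$ of the generator (this is the standard cohomological formulation of the $\Z/2\Z$-index, after Conner and Floyd). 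So the goal becomes to show $w^{2d+1} \neq 0$ with high probability.

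The argument for non-vanishing of $w^{2d+1}$ would proceed through the Serre spectral sequence of the fibration $\widetilde Z \to Z \to B\Z/2\Z$ (the Borel construction is equivalent to $Z$ because $\iota$ is free): $E_2^{p,q} = H^p(\R P^\infty; H^q(\widetilde Z; \mathbb{F}_2)) \Rightarrow H^{p+q}(Z; \mathbb{F}_2)$, with the bottom-row class $w$ at $E_2^{1,0}$. If with high probability $H^q(\widetilde Z; \mathbb{F}_2) = 0$ for $1 \leq q \leq 2d$, then any differential hitting $E_r^{2d+1, 0} = \mathbb{F}_2 \cdot w^{2d+1}$ would have to originate at $E_r^{2d+1-r, r-1}$ with $r-1 \geq 2d+1$, which forces $p = 2d+1-r \leq -1$. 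No such differential exists, so $w^{2d+1}$ survives to $E_\infty$ and the conjecture follows.

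The main obstacle is therefore to upgrade the rational $(2d-2)$-connectivity of $\widetilde Z$ guaranteed by Theorem~\ref{maintheorem}(3) to mod-$2$ vanishing of $H_k(\widetilde Z; \Z)$ for $d+1 \leq k \leq 2d$. This window precisely captures potential $2$-torsion in $H_k(\widetilde Z; \Z)$ for $d+1 \leq k \leq 2d-2$ as well as the two additional degrees $k = 2d-1$ and $k = 2d$ not addressed by Theorem~\ref{maintheorem}(3). This mirrors the Bouquet of Spheres conjecture for the random clique complex, where the analogous rational-to-integer upgrade is a central open problem. A first step is to inspect the proof of Theorem~\ref{maintheorem}(3): rational vanishing obtained through discrete Morse theory or an explicit collapsing scheme typically yields vanishing over every coefficient ring, which would make the mod-$2$ strengthening immediate. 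Otherwise one would need a finer combinatorial count controlling the potential mod-$2$ cycles generated by the specific structure of $\widetilde Z(2n,p)$ in the problematic range of degrees.

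As a more geometric alternative, one could try to exhibit with high probability a subcomplex $Y \subseteq Z$ combinatorially isomorphic to a triangulation of $\R P^{2d+1}$ such that the inclusion $Y \hookrightarrow Z$ sends the non-trivial element of $\pi_1(Y) = \Z/2\Z$ to the non-trivial element of $\pi_1(Z)$. The preimage of $Y$ in $\widetilde Z$ would then be an equivariant copy of $S^{2d+1}$, and any hypothetical $\Z/2\Z$-equivariant map $\widetilde Z \to S^{2d}$ would restrict to one $S^{2d+1} \to S^{2d}$, contradicting the classical Borsuk--Ulam theorem. This reduces the conjecture to identifying, for each $d$, a fixed graph $G_0$ with $Z(G_0) \cong \R P^{2d+1}$ (a higher-dimensional analog of the complement of a $5$-cycle in $K_6$, which gives the minimal $\R P^2$) and running a standard second-moment argument to find many disjoint copies of $G_0$ in $G(n, n^{-\alpha})$ for $\alpha < 1/d$. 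Constructing such graphs $G_0$ for all $d$ is an attractive combinatorial challenge in its own right.
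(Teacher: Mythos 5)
This statement is labeled a conjecture in the paper, and the paper does not prove it; it only offers the heuristic that the rational connectivity of $\widetilde Z$ is roughly $2d$, which motivates but does not establish the claim. So there is no ``paper proof'' to compare against, and any attempt must be judged on its own merits.

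Your cohomological reformulation is correct: the Borsuk--Ulam conclusion for maps $\widetilde Z \to \R^{2d+1}$ is equivalent to the nonexistence of a $\Z/2\Z$-equivariant map $\widetilde Z \to S^{2d}$, and $w^{2d+1} \ne 0$ in $H^{2d+1}(Z;\mathbb{F}_2)$ is a sufficient obstruction (since such an equivariant map would descend to $Z \to \R P^{2d}$, through which $w^{2d+1}$ would have to factor). The Serre spectral sequence bookkeeping is also right: if $H^q(\widetilde Z;\mathbb{F}_2) = 0$ for $1 \le q \le 2d$, then no differential can reach $E_r^{2d+1,0}$, and the edge-map interpretation of $E_\infty^{2d+1,0}$ gives $c^*(w^{2d+1}) \ne 0$ in $H^{2d+1}(Z;\mathbb{F}_2)$. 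So the conjecture would indeed follow from mod-$2$ acyclicity of $\widetilde Z$ in degrees $1$ through $2d$.

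However, this is precisely the open gap, and your hope that ``rational vanishing obtained through discrete Morse theory or an explicit collapsing scheme typically yields vanishing over every coefficient ring'' does not apply here: the paper's rational vanishing (Theorem~\ref{RationalHomologyVanishingTheorem}) is obtained via Garland's method, i.e.\ spectral expansion of links, which is inherently a real/rational argument and gives no torsion information. Moreover the range you need, $1 \le q \le 2d$, even rationally has a hole at $q = 2d-1$ (discussed in Section~\ref{MissingGroup}), so the mod-$2$ statement you need is strictly stronger than what is known on two counts: coefficients and the missing degree. The paper itself flags the integer/mod-$2$ upgrade as open and analogous to Kahle's Bouquet of Spheres conjecture. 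Your geometric alternative --- planting a copy of $G_0$ with $Z(G_0) \cong \R P^{2d+1}$ via a second-moment argument --- is an appealing and potentially independent route, but it requires both constructing such $G_0$ for all $d$ (open) and verifying that the injection on $\pi_1$ survives the ambient complex; neither is carried out. In short, your write-up is an accurate and well-organized reduction of the conjecture to two known hard subproblems, but it does not constitute a proof, and it does not identify any new leverage beyond what the paper already acknowledges as missing.
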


Given the wide applicability of the Borsuk--Ulam theorem, we hope that Theorem~\ref{thm:random-bu} will be useful when analyzing the typical behaviour in a wide array of contexts. for example. We derive the following consequence about the global structure of cliques in random graphs:

\begin{corollary}   
\label{cor:cliques}
    For $d \geq 1$, $1/(d + 1) < \alpha < 1/d$, and an Erd\H os--R\'enyi random graph $G \sim G(n, n^{-\alpha})$, with high probability for any map $f\colon V(G) \to \R^d$ from the vertices of $G$ to $\R^d$ there are two cliques $C_1, C_2$ in $G$ such that there is no edge from a vertex in $C_1$ to a vertex in $C_2$ and the convex hull of $f(C_1)$ intersects the convex hull of~$f(C_2)$.
\end{corollary}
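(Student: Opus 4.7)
The plan is to deduce the corollary from Theorem~\ref{thm:random-bu} applied to the double cover $\widetilde Z\sim\widetilde Z(2n,n^{-\alpha})$ of $Z(G)$. Identifying the vertex set of $\widetilde Z$ with $V(G)\times\{+,-\}$ so that $\iota$ swaps signs, one checks from the construction that a subset $S\subseteq V(G)\times\{+,-\}$ is a face of $\widetilde Z$ exactly when $S=(C_1\times\{+\})\cup(C_2\times\{-\})$ for disjoint cliques $C_1,C_2$ of $G$ with no edge of $G$ between them; this matches the description of faces of $Z(G)$ as disjoint unions of two non-adjacent cliques.

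Given $f\colon V(G)\to\R^d$, I would define a continuous map $F\colon |\widetilde Z|\to\R^{d+1}$ by
\[F(v,+)=(f(v),1),\qquad F(v,-)=(-f(v),-1),\]
extended affinely on each face. Then $F\circ\iota=-F$, so the coincidence condition $F(x)=F(\iota(x))$ guaranteed with high probability by Theorem~\ref{thm:random-bu} is equivalent to $F(x)=0$. Writing $x=\sum_{v\in C_1}t_v(v,+)+\sum_{v\in C_2}s_v(v,-)$ in barycentric coordinates on its carrying face and separating the last coordinate of $F(x)$ from the first $d$ gives
\[\sum_{v\in C_1}t_v f(v)=\sum_{v\in C_2}s_v f(v)\qquad\text{and}\qquad\sum_{v\in C_1}t_v=\sum_{v\in C_2}s_v.\]

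Combined with $\sum t_v+\sum s_v=1$, the second equation forces $\sum t_v=\sum s_v=\tfrac12$, so both $C_1$ and $C_2$ are nonempty, and rescaling each set of coefficients by a factor of $2$ exhibits the common value $\sum 2t_v f(v)=\sum 2s_v f(v)$ as a point of $\mathrm{conv}(f(C_1))\cap\mathrm{conv}(f(C_2))$; by the face description above, $C_1$ and $C_2$ are non-adjacent cliques of $G$. The main work is not so much an obstacle as a bookkeeping check: one must verify that the combinatorial model of $\widetilde Z$ really does have the stated faces and that $F$ is well-defined as a $\Z/2\Z$-antipodally equivariant piecewise affine map, at which point the random Borsuk--Ulam theorem closes the argument.
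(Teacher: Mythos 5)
Your proof is correct and takes essentially the same route as the paper: both build a map to $\R^{d+1}$ from $f$ and invoke Theorem~\ref{thm:random-bu} to produce a coincidence point lying in a face $(C_1\times\{+\})\cup(C_2\times\{-\})$, then read off a common point of $\mathrm{conv}(f(C_1))$ and $\mathrm{conv}(f(C_2))$ from the barycentric coordinates. The only difference is cosmetic: the paper uses the join-coordinate map $h(\lambda x+(1-\lambda)y)=(\lambda,\lambda F(x))$, while you use the symmetrized, antipodally equivariant map $F(v,\pm)=(\pm f(v),\pm 1)$, so that the coincidence condition becomes $F(x)=0$; the algebra reducing to $\sum t_v=\sum s_v=\tfrac12$ and $\sum 2t_vf(v)=\sum 2s_vf(v)$ is the same in both.
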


By Conjecture~\ref{conj:bu} this should hold more generally for maps $f\colon V(G) \to \R^{2d}$. We illustrate Corollary~\ref{cor:cliques} with an example: For $\alpha < \frac13$ a random graph $G \sim G(n, n^{-\alpha})$ has a $7$-clique with high probability; Corollary~\ref{cor:cliques} in this setting asserts that when we place the vertices of $G$ in~$\R^3$, there are two non-adjacent cliques whose convex hulls intersect. 

\subsection{Organization of the paper}
We collect notation and some recurring definitions and results in probability theory, topology, and spectral (hyper-)graph theory in Section~\ref{sec:Prelims}. In Section~\ref{sec:EquivalentForm} we show that our model $Z(n,p)$ can equivalently be defined as the simplicial complex of disjoint, nonadjacent cliques in a random graph. We show that homotopy groups of $\widetilde{Z}$ vanish up to dimension~$d$ in Section~\ref{sec:IntegerHomology} and use this to derive the random Borsuk--Ulam theorem and Corollary~\ref{cor:cliques} in Section~\ref{sec:random-bu}. Section~\ref{sec:RationalHomology} applies Garland's method to derive rational homology vanishing results on $\widetilde{Z}$ in dimensions up to $2d-2$ and dimension~$2d$. That rational homology does not vanish in dimension $2d+1$ but vanishes in higher dimensions is established in Section~\ref{sec:CritialDim}. Lastly, in Section~\ref{MissingGroup} we remark that new ideas are required to determine homology in dimension $2d-1$, which is not covered by our theorems.

\section{Preliminaries}\label{sec:Prelims}
We first recall some standard notions from topology and probability. As is typically the case in the study of random spaces our results are asymptotic. To this end we use Bachmann--Landau notation. For two function $f, g : \N \rightarrow \R$, we have
\begin{itemize}
    \item $f(n) = O(g(n))$ if there is some constant $C$ so that $f(n) \leq C g(n)$ for all $n$.
    \item $f(n) = \Omega(g(n))$ if there is some constant $c$ so that $f(n) \geq c g(n)$ for all $n$.
    \item $f(n) = \Theta(g(n))$ if $f(n) = O(g(n))$ and $f(n) = \Theta(g(n))$.
    \item $f(n) = o(g(n))$ if $\lim_{n \rightarrow \infty} \frac{f(n)}{g(n)} = 0$.
    \item $f(n) = \omega(g(n))$ if $\lim_{n \rightarrow \infty} \frac{f(n)}{g(n)} = \infty$.
\end{itemize}
For a probability space parametrized by $n$, for instance $G(n, p)$, $X(n, p)$, $Z(n, p)$, we say that a property $\mathcal{P}$ holds \emph{with high probability} if the probability that a random object in our probability space has property $\mathcal{P}$ tends to 1 as $n$ goes to infinity. It will be necessary here to distinguish this notion with the stronger notion of a property holding \emph{with overwhelming probability}. To say that $\mathcal{P}$ holds with overwhelming probability means that the probability that our random object has property $\mathcal{P}$ is at least $1 - n^{-\omega(1)}$, informally speaking the probability that $\mathcal{P}$ fails to hold is super-polynomially small. 

We will prove ``with overwhelming probability" statements throughout because many of our results make use of deterministic statement regarding links of faces in random complexes, and often we will take a union bound over polynomially many links of super-polynomially unlikely bad events. 

A \emph{simplicial complex} $X$ is a set of sets that is closed under taking subsets. A set $\sigma \in X$ is called a \emph{face} of~$X$; a face with one element is a \emph{vertex}, a face with two elements an \emph{edge}. For a simplicial complex $X$ and a face $\sigma$ of $X$ the \emph{link of $\sigma$}, denoted $\lk_X(\sigma)$ or just $\lk(\sigma)$ when $X$ is clear from context is defined by
\[\lk_X(\sigma) = \{\tau \in X \mid \tau \cap \sigma = \emptyset \text{ and } \tau \cup \sigma \in X\}.\]

The specific type of deterministic statements we prove regarding links most often involve \emph{Garland's method}. Garland's method is by now a standard technique to prove homology vanishing theorems for a simplicial complex based on expansion properties of graphs of faces links in the complex. Here expansion properties of a graph are stated in terms of the eigenvalues for the normalized Laplacian of the graph.

Given a graph $G$ without isolated vertices let $A$ denote the usual adjacency matrix of $G$ and $D$ be the degree matrix of $A$. The usual graph Laplacian is the matrix $L = D - A$, and the normalized Laplacian is $\mathcal{L} = D^{-1/2}LD^{-1/2} = I - D^{-1/2}AD^{-1/2}$. It is standard that all the eigenvalues of $\mathcal{L}$ for any graph are in the interval $[0, 2]$ and the multiplicity of the eigenvalue 0 is exactly the number of connected components of $G$; for any connected component $H$ of $G$ the vector $D^{1/2}\textbf{1}_H$ is easily seen to be in the kernel of $D^{-1/2}AD^{-1/2}$. Thus the smallest eigenvalue of $\mathcal{L}$ is always zero, and the \emph{spectral gap of $G$} is the second-largest eigenvalue of $\mathcal{L}$. Garland's method is based on the following formulation of an earlier result of Garland due to Ballman--\'{S}wi\c{a}tkowski. 
\begin{theorem}[Ballmann--\'{S}wi\c{a}tkowski \cite{BallmannSwiatkowski}, Garland \cite{Garland}]
If $X$ is a simplicial complex so that every face of dimension at most $d$ is contained in a $d$-dimensional face and for every $\sigma$ a $(d - 2)$-dimensional face, $\lk(\sigma)$ is connected and its graph has spectral gap $\lambda_2$ larger than $1 - 1/d$, then $H_{d - 1}(X; \Q) = 0$.
\end{theorem}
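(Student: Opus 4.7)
The plan is to invoke Hodge theory: realize $H_{d-1}(X;\Q)$ as the kernel of a combinatorial Laplacian on $(d-1)$-cochains, and then use Garland's local-to-global decomposition to turn the spectral gap hypothesis on links into positive definiteness of this Laplacian. First I would equip the rational cochain complex $C^*(X;\Q)$ with the weighted inner product in which each $k$-face $\sigma$ has weight equal to the number of $d$-faces of $X$ containing $\sigma$; the purity hypothesis ensures these weights are strictly positive. With this inner product, Hodge theory gives $H^{d-1}(X;\Q)\cong \ker\Delta_{d-1}$, where $\Delta_{d-1}=\delta^*\delta+\delta\delta^*$, and since we are over a field $H^{d-1}(X;\Q)$ is dual to $H_{d-1}(X;\Q)$, so it suffices to show that $\Delta_{d-1}$ has trivial kernel.

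Next I would establish Garland's local identity expressing $\langle \Delta_{d-1}\phi,\phi\rangle$ as a positively weighted sum, over the $(d-2)$-faces $\tau$, of the Dirichlet quadratic form of the normalized Laplacian $\mathcal{L}_{\lk(\tau)}$ applied to the local restriction $\phi_\tau$ of $\phi$ to $\lk(\tau)$, corrected by a multiple of $\|\phi\|^2$ coming from the ``constant'' contribution. Here $\phi_\tau$ is the function on vertices $v$ of $\lk(\tau)$ given by $\phi_\tau(v)=\pm\phi(\tau\cup\{v\})$ with the appropriate orientation sign, and a $d$-face of $X$ containing $\tau$ corresponds bijectively to an edge of $\lk(\tau)$. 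The derivation amounts to matching the up-Laplacian $\delta^*\delta$ and the down-Laplacian $\delta\delta^*$ on $C^{d-1}$ against the combinatorial structure of links. For each $\tau$ I would then orthogonally decompose $\phi_\tau=\phi_\tau^0+\phi_\tau^\perp$ with $\phi_\tau^0\in\ker\mathcal{L}_{\lk(\tau)}$, and apply the spectral gap hypothesis to get $\langle \mathcal{L}_{\lk(\tau)}\phi_\tau^\perp,\phi_\tau^\perp\rangle>(1-1/d)\|\phi_\tau^\perp\|^2$. Summing over $\tau$ and balancing against the constant-term contribution would yield $\langle\Delta_{d-1}\phi,\phi\rangle>0$ unless every $\phi_\tau$ lies in $\ker\mathcal{L}_{\lk(\tau)}$, a condition which combinatorially forces $\phi$ to be a coboundary, i.e.\ $\phi\in\operatorname{im}(\delta_{d-2})$. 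Since harmonic cochains are orthogonal to $\operatorname{im}(\delta_{d-2})$, this forces $\phi=0$.

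The main obstacle is establishing Garland's identity with the sharp constant $1-1/d$: getting the weights, signs, and the cancellation between the up and down Laplacian contributions exactly right so that the spectral gap threshold in the hypothesis matches the vanishing conclusion. This is bookkeeping-heavy but standard, and follows from an explicit formula for the adjoint $\delta^*$ in the weighted inner product. A useful sanity check is the extremal case where the links of $(d-2)$-faces are complete graphs on $m$ vertices: the normalized Laplacian then has nonzero eigenvalue $m/(m-1)$, which exceeds $1-1/d$ precisely when $m\ge d$, and this is the combinatorial threshold at which the $(d-1)$-homology of $X$ is forced to vanish. Once the identity is in hand, the remaining spectral argument is immediate.
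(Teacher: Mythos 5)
The paper does not prove this statement; it is quoted as a black box from Ballmann--\'Swi\c{a}tkowski and Garland and used directly in Sections~\ref{sec:RationalHomology} and~\ref{MissingGroup}, so there is no proof in the paper to compare against. What you have written is a reasonable synopsis of the standard proof in those references: pass to cohomology over $\Q$, weight the cochain groups by counts of top-dimensional cofaces (purity makes the weights positive), identify $H^{d-1}$ with harmonic cochains, and then use Garland's local-to-global identity to turn the link spectral gaps into positive-definiteness of the Laplacian quadratic form on $(d-1)$-cochains. You are correct that the genuine content is the identity itself, and that the surrounding Hodge-theoretic scaffolding is routine.

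Two substantive remarks on the sketch itself. First, your sanity check is wrong: for $\lk(\sigma)=K_m$ the normalized Laplacian has $\lambda_2=m/(m-1)>1$, which exceeds $1-1/d$ for \emph{every} $m\ge 2$ and $d\ge 1$, so there is no threshold at $m\ge d$; complete links always satisfy the hypothesis, consistent with the fact that $H^{d-1}$ of the $d$-skeleton of a large simplex vanishes. Second, the logical shape of your final step is a little off. In the standard argument one does not conclude that ``every $\phi_\tau\in\ker\mathcal L_{\lk(\tau)}$ forces $\phi$ to be a coboundary''; rather, the strict inequality $\lambda_2>1-1/d$, fed into Garland's identity for a harmonic $\phi$, gives a strict inequality of the form $0=\langle\Delta\phi,\phi\rangle\ge c\|\phi\|^2$ with $c>0$, hence $\phi=0$ outright. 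The detour through coboundaries is not needed and, as stated, the asserted implication is not justified. Neither issue touches the core structure, but both would need to be repaired in a complete write-up, and of course the derivation of the identity with the correct constant is precisely what is being deferred.
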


Every simplicial complex has a geometric realization~$|X|$, where each set in $X$ is a simplex, glued according to set inclusion. We will only work with finite simplicial complexes on vertex set $[n] = \{1,2,\dots,n\}$. In this situation we can define the geometric realization $|X|$ of $X$ as a union of convex sets in $\R^n$ as follows:
\[
    |X| = \bigcup_{\sigma \in X} \mathrm{conv} \{e_i \ : \ i \in \sigma\} \subset \R^n.
\]

Here $\mathrm{conv} A$ denotes the convex hull of a set $A\subset \R^n$, and $e_1, \dots, e_n$ is the standard basis of~$\R^n$. Our notation will not distinguish between a simplicial complex $X$ and its geometric realization~$|X|$, unless this distinction is necessary. For example, we write $\pi_k(X)$ for the $k$th homotopy group of~$|X|$ with any base point. In this case, it is implicit that $X$ is path-connected, or if $X$ is a random space that it is path-connected with high probability.

Let $X$ be a simplicial complex on~$V$, $Y$ a simplicial complex on~$W$, and $f\colon V \to W$. We call $f$ \emph{simplicial map} if for every $\sigma \in X$ the image $f(\sigma)$ is a face of~$Y$. In this case we write $f\colon X\to Y$. A simplicial map $f\colon X\to Y$ induces a continuous map $|X| \to |Y|$.

For a simplicial complex $X$ on $[n]$ the \emph{join} $X^{*2}$ is the simplicial complex on $[n] \times \{-1,+1\}$ with faces $(\sigma \times \{-1\}) \cup (\tau \times \{+1\})$ for $\sigma, \tau \in X$. The \emph{deleted join} $X^{*2}_\Delta$ of $X$ is the subcomplex of $X^{*2}$ with faces $(\sigma \times \{-1\}) \cup (\tau \times \{+1\})$ for disjoint faces $\sigma, \tau \in X$.

The geometric realization of the join $X^{*2}$ is the join of the geometric realization~$|X|$, where the \emph{join} of a topological space $Y$ with itself consists of all abstract convex combinations of two ordered points in~$Y$, that is, it is the quotient of $Y \times Y \times [0,1]$ by the equivalence relation $\sim$ given by $(y,z,0) \sim (y',z,0)$ and $(y,z,1) \sim (y,z',1)$ for $y,y',z,z'\in Y$.

We will need the \emph{separated deleted join}, denoted~$\mdeljoin{X}$, which is the subcomplex of $X^{*2}$ with faces $(\sigma \times \{-1\}) \cup (\tau \times \{+1\})$ for faces $\sigma, \tau \in X$ such that $\sigma \cap \tau = \emptyset$ and no vertex of~$\sigma$ are adjacent to any vertex of~$\tau$. We refer to vertices $(v,-1)$ of $X^{*2}$ as \emph{minus vertices} and similarly to vertices $(v,+1)$ as \emph{plus vertices}. 

The join $X^{*2}$ has an involution (i.e., a $\Z/2\Z$-action) given by swapping corresponding plus and minus vertices: $\iota \colon X^{*2} \to X^{*2}$ is a simplicial map with $\iota((v,-1)) = (v,+1)$ and $\iota((v,+1)) = (v,-1)$. This involution is free when restricted to the deleted join~$X^{*2}_\Delta$, that is, for $x \in |X^{*2}_\Delta|$ we have that $\iota(x) \ne x$. The quotient of $X^{*2}_\Delta$ by this $\Z/2\Z$-action is a simplicial complex, where each vertex $v$ is identified with $\iota(v)$ and faces are identified accordingly. This quotient has the drawback that it is not homeomorphic to the $\Z/2\Z$-quotient of $|X^{*2}_\Delta|$ as a topological space; this is since there are many paths of the form $v, w, \iota(v)$, that in the topological quotient yield loops, but in the simplicial complex collapse to a single edge. For the purpose of taking quotients the separated deleted join is well-behaved: Each vertex $v$ is at distance at least three edges from $\iota(v)$, thus the geometric realization of the $\Z/2\Z$-quotient of~$\mdeljoin{X}$ is naturally homeomorphic to the $\Z/2\Z$-quotient of~$|\mdeljoin{X}|$. In this case we denote the quotient by $\mdeljoin{X}/(\Z/2\Z)$.


We will need two basic facts about homotopy groups, which we collect below.

\begin{theorem}[Hurewicz' theorem c.f. \cite{Hatcher} Theorems 4.32 and 2A.1]
\label{thm:hurewicz}
    Let $n \ge 2$, and let $X$ be an $(n-1)$-connected space. Then $\pi_n(X)$ is isomorphic to~$H_n(X, \Z)$. For a path-connected space $X$ first homology $H_1(X,\Z)$ is the abelianization of~$\pi_1(X)$.
\end{theorem}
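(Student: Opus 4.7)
The plan is to define the Hurewicz homomorphism $h\colon \pi_n(X) \to H_n(X, \Z)$ by sending the class of a pointed map $f\colon (S^n,*)\to (X,*)$ to $f_*(\mu_n)$, where $\mu_n$ is a fixed generator of $H_n(S^n, \Z) \cong \Z$. First I would verify that $h$ is well-defined (homotopic maps induce the same map on homology) and that $h$ is a group homomorphism, which follows from naturality of $H_n$ applied to the pinch map $S^n \to S^n \vee S^n$ that models the group operation in $\pi_n$.

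For the case $n \geq 2$, the approach is to pass to a cellular model. Using CW approximation together with the hypothesis that $X$ is $(n-1)$-connected, I would replace $X$ up to weak equivalence by a CW complex with a single $0$-cell and no cells in dimensions $1$ through $n-1$. In such a model the $n$-skeleton $X^{(n)}$ is a wedge of $n$-spheres indexed by the $n$-cells, and both $\pi_n(X^{(n)})$ and $H_n(X^{(n)}, \Z)$ naturally identify with the free abelian group on those cells. Attaching the $(n+1)$-cells kills elements on each side: cellular approximation identifies the subgroup killed in $\pi_n$ (generated by the attaching maps) with the image of the cellular boundary operator in $H_n$. A further cellular approximation argument shows that attaching cells of dimension at least $n+2$ changes neither $\pi_n$ nor $H_n$, so both groups coincide with the same cokernel and $h$ is precisely this identification.

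For $n = 1$, since $H_1(X,\Z)$ is abelian, $h$ automatically factors through the abelianization $\pi_1(X)^{\mathrm{ab}}$. To show that the induced map $\pi_1(X)^{\mathrm{ab}} \to H_1(X, \Z)$ is an isomorphism, I would construct an explicit inverse using singular $1$-chains: after fixing a system of paths from the basepoint to every point of $X$, any singular $1$-simplex can be converted into a based loop, and one verifies that this assignment is well-defined modulo boundaries of $2$-simplices (which yield relations that become trivial after abelianization) and that the composition with $h$ is the identity on both sides.

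The main obstacle is the cellular-approximation step for $n \geq 2$: carefully identifying the image of the $(n+1)$-cell attaching maps in $\pi_n(X^{(n)})$ with the image of the cellular boundary operator in $H_n(X^{(n)}, \Z)$ is the crux of the argument. A clean way to handle this is via the long exact sequences of the pair $(X^{(n+1)}, X^{(n)})$ simultaneously for $\pi_\ast$ and $H_\ast$, exploiting that the relative groups for this pair both compute the same free abelian group on the set of $(n+1)$-cells and that the two connecting maps agree by definition of the cellular boundary.
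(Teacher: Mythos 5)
The paper does not prove this statement; it is quoted as a standard reference from Hatcher (Theorems 4.32 and 2A.1) and used as a black box to pass between homotopy and homology groups. So there is no proof in the paper to compare against. Your sketch is a correct outline of Hatcher's own argument: the Hurewicz map defined by pushing forward the fundamental class, CW approximation with no cells in dimensions $1$ through $n-1$, identification of $\pi_n$ and $H_n$ of the $n$-skeleton as the free abelian group on $n$-cells, and a comparison of the two long exact sequences of the pair $(X^{(n+1)}, X^{(n)})$ to see that the same subgroup is killed; the $n=1$ case by factoring through the abelianization and building an explicit inverse is Hatcher's Theorem 2A.1. The only place you gloss is in asserting that $\pi_{n+1}(X^{(n+1)},X^{(n)})$ is free abelian on the $(n+1)$-cells with boundary the attaching maps --- that step itself needs homotopy excision (Blakers--Massey) or an inductive proof of the relative Hurewicz theorem running in parallel, which is how Hatcher actually organizes the argument. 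This is a fine level of detail for a sketch, but worth flagging if you were to write it out in full.
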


\begin{theorem}
\label{thm:iso-hg}[c.f. Proposition 4.1 of \cite{Hatcher}]
    Let $p\colon \widetilde X \to X$ be the universal covering space of~$X$. Then for $n\ge 2$ we have that $\pi_n(\widetilde X)$ is isomorphic to $\pi_n(X)$. 
\end{theorem}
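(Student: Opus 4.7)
The plan is to show that the covering map $p\colon \widetilde X \to X$ induces an isomorphism $p_*\colon \pi_n(\widetilde X) \to \pi_n(X)$ for every $n \ge 2$, using the lifting properties of covering spaces. Throughout, I would fix a basepoint $\tilde x_0 \in \widetilde X$, set $x_0 = p(\tilde x_0)$, and work with based maps $(S^n, s_0) \to (X, x_0)$ representing elements of $\pi_n(X, x_0)$.

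First I would show surjectivity of $p_*$. Given any based map $f\colon (S^n, s_0) \to (X, x_0)$, the general lifting criterion for covering spaces guarantees a lift $\tilde f\colon (S^n, s_0) \to (\widetilde X, \tilde x_0)$ with $p \circ \tilde f = f$ provided $f_*(\pi_1(S^n, s_0)) \subseteq p_*(\pi_1(\widetilde X, \tilde x_0))$. Since $n \ge 2$, the sphere $S^n$ is simply connected, so this containment is automatic, the lift exists, and $p_*[\tilde f] = [f]$.

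For injectivity I would use the same criterion applied to homotopies. Suppose $\tilde f, \tilde g\colon (S^n, s_0) \to (\widetilde X, \tilde x_0)$ satisfy $p \circ \tilde f \simeq p \circ \tilde g$ via a basepoint-preserving homotopy $H\colon S^n \times I \to X$. Because $S^n \times I$ is also simply connected for $n \ge 2$, the lifting criterion produces a lift $\tilde H\colon S^n \times I \to \widetilde X$ of $H$ with $\tilde H(s_0, 0) = \tilde x_0$. Uniqueness of lifts at the basepoint forces $\tilde H(\,\cdot\,, 0) = \tilde f$; then $\tilde H(\,\cdot\,, 1)$ is a lift of $p \circ \tilde g$ that agrees with $\tilde g$ at the basepoint, so a second application of uniqueness yields $\tilde H(\,\cdot\,, 1) = \tilde g$. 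Hence $[\tilde f] = [\tilde g]$ in $\pi_n(\widetilde X, \tilde x_0)$.

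The main technical obstacle is entirely bookkeeping: one must track basepoints carefully and repeatedly invoke uniqueness of lifts to ensure the constructed lifts of maps and homotopies begin and end at the prescribed points. There is no deeper conceptual difficulty, and as indicated in the statement this is the standard argument from Proposition 4.1 of Hatcher's textbook, which is why a citation suffices in the paper itself.
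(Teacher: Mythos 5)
Your proof is correct and is precisely the standard covering-space argument from Hatcher's Proposition 4.1, which is the reference the paper cites in lieu of giving its own proof. The use of the lifting criterion (valid since $S^n$ and $S^n\times I$ are simply connected for $n\ge 2$) together with uniqueness of lifts is exactly the intended argument.
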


\section{Equivalent form of the random model}\label{sec:EquivalentForm}







We explain our model $\widetilde Z(2n,p)$ for a random complex with a free involution. Let $G$ be a (finite, simple) graph. We denote its \emph{flag complex} by $\overline G$, that is, $\overline G$ is the simplicial complex with ground set the vertex set of~$G$ and a face for every clique (i.e., complete subgraph) in~$G$. Now if $G \sim G(n,p)$ is an Erd\H os--R\'enyi random graph on $n$ vertices then we get a simplicial complex $\widetilde Z$ by taking the separated deleted join of the flag complex of~$G$, that is, $\widetilde Z = \mdeljoin{\overline G}$. In general, we denote by $\widetilde Z(G)$ the complex $\mdeljoin{\overline G}$ for any given graph~$G$. The complex $\widetilde Z$ has $2n$ vertices and a free $\Z/2\Z$-action that swaps each vertex with its copy. The quotient by this action is a simplicial complex; we will show that this is the model~$Z(n,p)$.


\begin{theorem}\label{DZtheorem}
For $d \geq 1$ and $1/(d + 1) < \alpha < 1/d$ with high probability $\widetilde Z \sim \widetilde Z(2n, n^{-\alpha})$ has all of the following properties:
\begin{enumerate}
    \item $\pi_k(\widetilde Z) = 0$ for $1 \leq k \leq d$.
    \item $\pi_k(\widetilde Z) \otimes \Q = 0$ for $d + 1 \leq k \leq 2d-2.$
    \item $H_{2d}(\widetilde Z; \Q) = 0.$
    \item $H_{2d + 1}(\widetilde Z) \neq 0$.
    \item $H_k(\widetilde Z) = 0$ for all $k \geq 2d + 2.$
\end{enumerate}
\end{theorem}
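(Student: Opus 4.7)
The plan is to prove each of the five parts of Theorem~\ref{DZtheorem} separately, exploiting that a $k$-face of $\widetilde Z = \mdeljoin{\overline G}$ is a pair $(\sigma, \tau)$ of vertex-disjoint, mutually non-adjacent cliques of~$G$ with $|\sigma| + |\tau| = k + 1$. Throughout I will use the standard fact that in the probability regime $p = n^{-\alpha}$ with $1/(d+1) < \alpha < 1/d$, the clique number of $G \sim G(n, p)$ is $d + 1$ with overwhelming probability. This immediately yields part~(5): with high probability $\widetilde Z$ has no simplex of dimension exceeding $2(d+1) - 1 = 2d + 1$, so $H_k(\widetilde Z) = 0$ for $k \geq 2d + 2$.

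For part~(1), I would show that $\widetilde Z$ is $d$-connected by applying the nerve lemma to a cover of $\widetilde Z$ by the closed stars of its maximal faces, each of which is contractible. The task then reduces to showing that, with overwhelming probability, the nerve of this cover is $d$-connected; equivalently, one verifies directly that every small clique pair in $G$ admits enough common extensions to $(d+1)$-sized non-adjacent clique pairs for every continuous map $S^k \to |\widetilde Z|$ with $k \le d$ to be nullhomotopic. A first-moment calculation on the probability that a fixed small vertex set fails to have the required extensions, together with a union bound over the polynomially many relevant configurations, should make this rigorous.

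For parts~(2) and (3), once part~(1) is in place, iterated application of Hurewicz's theorem (Theorem~\ref{thm:hurewicz}) reduces rational homotopy vanishing to rational homology vanishing in dimensions $d+1 \le k \le 2d-2$ and in $k = 2d$. I would then apply Garland's method via the Ballmann--\'{S}wi\c{a}tkowski theorem stated in Section~\ref{sec:Prelims}: for every $(k-2)$-face $\rho = (\sigma, \tau) \in \widetilde Z$, the link $\lk(\rho)$ is a ``mixed'' separated deleted join on vertex set $V_-(\rho) \sqcup V_+(\rho)$, where $V_-(\rho)$ consists of those vertices of $G$ adjacent to every vertex of $\sigma$ but neither contained in nor adjacent to $\tau$, and $V_+(\rho)$ is defined symmetrically; internal edges on each side come from $G$, and cross-edges from its complement. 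It suffices to show that with overwhelming probability the 1-skeleton of every such link has spectral gap exceeding $1 - 1/k + \epsilon$, and then to union-bound over the polynomially many links. In the relevant range the sizes $|V_\pm(\rho)| = \Theta(n^{1 - \alpha|\sigma|})$ remain positive powers of $n$, so standard spectral concentration for sparse random graphs and their bipartite-like analogues should supply the required estimates.

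For part~(4), since with high probability there are no simplices of dimension $\geq 2d+2$, one has $H_{2d+1}(\widetilde Z; \Z) = \ker \partial_{2d+1}$, whose rank is at least the number of $(2d+1)$-faces minus the number of $2d$-faces. A first-moment calculation gives that the former is $\Theta(n^{(d+1)(2 - \alpha d)})$ while the latter is $\Theta(n^{2d + 1 - \alpha d^2})$, a ratio of $n^{1 - \alpha d}$ that tends to infinity since $\alpha d < 1$; Chebyshev applied to both face counts then forces $H_{2d+1}(\widetilde Z) \neq 0$ with high probability. The hardest step will be the Garland argument underlying part~(3), the rational vanishing at the penultimate dimension $2d$: here the $(2d-2)$-faces $\rho$ are close to top-dimensional, so the residual vertex sets $V_\pm(\rho)$ each contain only $\Theta(n^{1 - \alpha d})$ vertices, and the average degree, spectral gap, and vertex count in the link become delicately balanced. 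One must carefully exploit the mixed deleted-join structure of the link---and not merely treat it as a random graph on $V_-(\rho) \sqcup V_+(\rho)$---to obtain the required bound $\lambda_2 > 1 - 1/(2d)$.
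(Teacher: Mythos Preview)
Your argument for part~(5) rests on a false premise: the clique number of $G \sim G(n, n^{-\alpha})$ with $1/(d+1) < \alpha < 1/d$ is \emph{not} $d+1$ but roughly $2/\alpha \approx 2d+1$ (the expected number of $k$-cliques is $\Theta(n^{k - \alpha\binom{k}{2}})$, which tends to infinity for $k-1 < 2/\alpha$). Hence $\widetilde Z$ does contain faces of dimension well above $2d+1$, and your dimension argument collapses. The paper instead invokes Malen's result that $X(n, n^{-\alpha})$ is $(d+1)$-collapsible for $\alpha > 1/(d+1)$, and shows (Lemma~\ref{LiftCollapsing}) that elementary collapses in $X$ lift to sequences of collapses in $\mdeljoin{X}$, so $\widetilde Z$ collapses to a $(2d+1)$-dimensional complex. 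The same error propagates into your part~(4): you cannot assume $f_{2d+2} = 0$, and must instead use $\beta_{2d+1} \ge f_{2d+1} - f_{2d} - f_{2d+2}$ together with the fact that $f_{2d+2} \ll f_{2d+1}$ (Theorem~\ref{FvectorCounts}); your face-count asymptotics are otherwise correct.

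For parts~(2) and~(3), your outline omits a genuine ingredient. Applying Garland directly to $\widetilde Z$ would require a spectral gap for the full mixed link graph $H(n, p_A, p_B, q)$, but the paper instead first reduces (Lemma~\ref{GluingStep}) every $2d$-cycle to one supported on the balanced subcomplex $\Pi_{d+1,d+1}$, where links of $(2d-2)$-faces are either purely bipartite or purely one-sided. Even then, ``standard spectral concentration'' does not suffice: the paper develops a new deterministic bound (Theorem~\ref{bipartiteexpansion}, via Bilu--Linial) converting vertex-expansion estimates into spectral gap bounds for bipartite graphs, precisely because the existing literature lacked the overwhelming-probability estimates needed to survive a union bound over polynomially many links. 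For part~(1), the paper covers by \emph{vertex} stars rather than stars of maximal faces, and the inductive structure (Lemma~\ref{symmetricNerveLemma}) hinges on a common-neighbor condition for $(2d+1)$-tuples and a path-connectedness condition for common links of $2d$-tuples; your maximal-face cover would require a separate analysis of intersections, which you have not supplied.
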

By Theorem~\ref{DZtheorem} for $\alpha < 1$, $\widetilde Z$ for $\widetilde Z \sim \widetilde Z(2n, n^{-\alpha})$ is simply connected with high probability and thus up to artefacts of vanishingly small probability we can think of $\widetilde Z \sim \widetilde Z(2n, n^{-\alpha})$ as the universal cover of $Z \sim Z(n, n^{-\alpha})$. So points 1, 2, and 3 of Theorem~\ref{maintheorem} follow immediately from points 1 and 2 of Theorem~\ref{DZtheorem}. Points 4 and 5 of Theorem~\ref{maintheorem} are not direct implications of Theorem~\ref{DZtheorem} but follow immediately from the proofs of points 4 and 5 of Theorem~\ref{DZtheorem}.

In the rest of this section we show that $\longJF{G}$ really is a double cover of $Z(G)$, justifying $\widetilde Z(G)$ as notation for $\longJF{G}$ since in the regime under consideration $\widetilde{Z}(G)$ is simply connected with overwhelming probability.




\begin{lemma}
\label{lem:cliques}
Let $G=(V,E)$ be a finite, simple graph. A subset $\tau \subset V$ is a face of $\mdeljoin{G}/(\Z/2\Z)$ if and only if the vertices of $\tau$ can be partitioned into two sets $\tau_1$ and $\tau_2$ so that $\tau_1 \cap \tau_2 = \emptyset$, $\tau_1$ and $\tau_2$ each induce a clique in~$G$, but no edges from $\tau_1$ to $\tau_2$ are present in~$G$.
\end{lemma}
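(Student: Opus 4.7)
My plan is to prove the equivalence by directly chasing the definitions of the separated deleted join and its $\Z/2\Z$-quotient. The preliminaries in Section~\ref{sec:Prelims} already guarantee that because each vertex of $\mdeljoin{\overline G}$ is at distance at least three from its image under $\iota$, the simplicial quotient $\mdeljoin{\overline G}/(\Z/2\Z)$ has vertex set canonically identified with $V$ by collapsing each pair $\{(v,-1),(v,+1)\}$ to $v$, and its faces are precisely the images of faces of $\mdeljoin{\overline G}$ under this collapse. Once this identification is in place, the lemma reduces to a pair of definition checks.

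For the direction $(\Leftarrow)$, suppose $\tau$ admits a partition $\tau=\tau_1\sqcup\tau_2$ into two non-adjacent cliques of $G$. Since $\overline G$ is the flag complex of $G$, both $\tau_1$ and $\tau_2$ are faces of $\overline G$. Because $\tau_1\cap\tau_2=\emptyset$ and no vertex of $\tau_1$ is adjacent in $G$ to any vertex of $\tau_2$, the simplex $(\tau_1\times\{-1\})\cup(\tau_2\times\{+1\})$ meets the defining conditions of $\mdeljoin{\overline G}$ and is therefore a face of it. Its image in $\mdeljoin{\overline G}/(\Z/2\Z)$ is $\tau_1\cup\tau_2=\tau$, exhibiting $\tau$ as a face of the quotient.

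For $(\Rightarrow)$, I would lift: if $\tau$ is a face of the quotient, then by the well-behavedness noted above there is a face $\sigma$ of $\mdeljoin{\overline G}$ whose image is $\tau$. By the definition of the separated deleted join, $\sigma=(\sigma_1\times\{-1\})\cup(\sigma_2\times\{+1\})$ for some $\sigma_1,\sigma_2\in \overline G$ with $\sigma_1\cap\sigma_2=\emptyset$ and no $G$-edge between them. Because $\overline G$ is the flag complex of $G$, each of $\sigma_1$ and $\sigma_2$ is a clique of $G$. Setting $\tau_1=\sigma_1$ and $\tau_2=\sigma_2$ gives the required partition of $\tau=\sigma_1\sqcup\sigma_2$.

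The statement is essentially a definition chase, so I do not expect any serious obstacle; the one point requiring care is the well-behavedness of the simplicial quotient, which is precisely why the separated (rather than the ordinary) deleted join is used here, as emphasized in Section~\ref{sec:Prelims}. This lemma then serves as the bridge identifying the abstract construction $\mdeljoin{\overline G}/(\Z/2\Z)$ with the concrete description of $Z(G)$ as the complex of unions of two disjoint, mutually non-adjacent cliques previewed in the introduction.
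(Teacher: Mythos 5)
Your proof is correct and follows essentially the same definition-chasing argument as the paper's: in both directions you lift $\tau$ to a face $(\sigma_1\times\{-1\})\cup(\sigma_2\times\{+1\})$ of $\mdeljoin{\overline G}$ (or construct such a lift), and read off the clique and non-adjacency conditions directly from the definition of the separated deleted join of a flag complex. The only cosmetic difference is that you spell out the well-behavedness of the simplicial quotient more explicitly, which the paper delegates to the discussion in Section~\ref{sec:Prelims}.
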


\begin{proof}
Suppose that $\tau \in \mdeljoin{G}/(\Z/2\Z)$. Then $\tau$ is the image under the quotient of a face $(\tau_1\times\{+1\}) \cup (\tau_2\times\{-1\})$ in $\mdeljoin{\overline{G}}$. By definition of $\mdeljoin{\overline{G}}$ all edges within $\tau_1$ and all edges within $\tau_2$ are present in~$G$, but all edges between $\tau_1$ and $\tau_2$ are absent from~$G$. 

Conversely, if $\tau_1$ and $\tau_2$ are cliques of~$G$ partitioning the vertices of $\tau \subset V$, then $(\tau_1\times\{+1\}) \cup (\tau_2\times\{-1\})$ is a face of $\mdeljoin{\overline{G}}$ that maps to $\tau$ when taking the quotient. 
\end{proof}

\begin{theorem}
For any graph $G$, $\longJFQ{G} = Z(G).$ Thus $\longJF{G}$ is a double cover of $Z(G)$.
\end{theorem}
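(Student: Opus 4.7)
The plan is to identify both $Z(G)$ and $\longJFQ{G}$ as the simplicial complex on $V(G)$ whose faces are exactly the subsets that can be partitioned into two cliques of $G$ with no edges between them. Lemma~\ref{lem:cliques} already gives this description for $\longJFQ{G}$, so the remaining task is to establish the same description for $Z(G)$ directly from its inductive definition via triangles. Once the two simplicial complexes agree as sets of faces, the double-cover conclusion follows from the preliminaries: the involution $\iota$ acts freely on $|\mdeljoin{\overline{G}}|$, and for a separated deleted join the geometric realization of the simplicial quotient coincides with the topological quotient, so $\longJF{G} \to \longJFQ{G} = Z(G)$ is a $2$-to-$1$ covering map.

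The characterization of $Z(G)$ reduces to the triangle case, since vertices and edges are trivially partitionable (an edge $\{u,v\}$ via $(\{u,v\},\emptyset)$ or $(\{u\},\{v\})$ depending on whether $\{u,v\} \in E(G)$), and higher-dimensional faces are governed by their triangles. For a triangle $\{a,b,c\}$, the number of $G$-edges is odd precisely when a clique partition exists: one edge $\{a,b\}$ yields the partition $(\{a,b\},\{c\})$ and three edges yield $(\{a,b,c\},\emptyset)$, while $0$ or $2$ edges admit no valid partition.

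For $\sigma$ with $|\sigma| \geq 4$, the forward implication is immediate: any partition $\sigma = \tau_1 \sqcup \tau_2$ forces each triangle of $\sigma$ to have either $3$ edges (all three vertices in one $\tau_i$) or exactly $1$ edge (two vertices in one $\tau_i$ and one in the other), both odd. Conversely, suppose every triangle of $\sigma$ lies in $Z(G)$. Then no triangle on $\sigma$ has exactly two $G$-edges, so the induced subgraph $G[\sigma]$ contains no induced $P_3$ and is therefore a disjoint union of cliques. If $G[\sigma]$ had three or more such clique components, one could select a vertex from each of three components to obtain a triangle with zero $G$-edges, an even count, contradicting the hypothesis. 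Hence $G[\sigma]$ is a union of at most two cliques, yielding the required partition. The main obstacle I anticipate is exactly this last step: ruling out induced $P_3$'s handles triangles with two edges, but eliminating the three-component configuration requires simultaneously exploiting the hypothesis against zero-edge triangles, so both ends of the parity window must be deployed in tandem to pin down the structure of $G[\sigma]$.
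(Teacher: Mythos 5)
Your proof is correct, and the overall plan coincides with the paper's: use Lemma~\ref{lem:cliques} to characterize $\longJFQ{G}$ as the complex of subsets that partition into two non-adjacent cliques, show $Z(G)$ has the same characterization, and deduce the covering statement from the separation property of the separated deleted join. The forward direction for $|\sigma|\geq 3$ is the same in both arguments.

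Where you genuinely depart from the paper is in the converse direction. The paper fixes an arbitrary edge $(v,w)$ of $G[\tau]$ (which exists because any triple must contain at least one edge), declares $\tau_1$ to be $\{v,w\}$ together with the common neighbors of $v$ and $w$ inside $\tau$, sets $\tau_2 = \tau\setminus\tau_1$, and then verifies clique and non-adjacency properties by repeatedly using the odd-edge-count condition on triples. You instead observe that forbidding two-edge triangles says $G[\sigma]$ is $P_3$-free and hence a disjoint union of cliques, then use the zero-edge prohibition to cap the number of components at two. Your route is slightly more conceptual and shorter because it invokes the standard cluster-graph characterization; the paper's is more self-contained in that it re-derives the partition by hand. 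Both arguments exploit exactly the same two parity constraints (no $0$-edge triples, no $2$-edge triples), just packaged differently, and the last point you flag as the ``main obstacle'' — that both ends of the parity window must be used — is indeed the crux in the paper's version as well.
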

\begin{proof}
    By Lemma~\ref{lem:cliques}, $\longJFQ{G}$ has complete $1$-skeleton: for two distinct vertices $v$ and~$w$, either $(v,w)$ is an edge of $G$ and thus a $2$-clique, or it is not present as an edge in~$G$, in which case $\{v\}$ and $\{w\}$ are non-adjacent $1$-cliques.
    
    Now let $\tau \in \longJFQ{G}$ with $|\tau| \ge 3$, say $\tau = \tau_1 \cup \tau_2$, where $\tau_1$ and $\tau_2$ are (the vertices of) disjoint, nonadjacent cliques in~$G$. Let $\sigma \subset \tau$ be a subset of size three. If $\sigma \subset \tau_i$, then all pairs of vertices in $\sigma$ are connected by edges, so $\sigma$ induces three edges in~$G$. If $\sigma$ is split between $\tau_1$ and~$\tau_2$, say $|\sigma \cap \tau_1| = 1$ and $|\sigma \cap \tau_2| = 2$, then $\sigma$ induces exactly one edge in~$G$: The two vertices in $\sigma\cap \tau_2$ are connected by an edge, while no vertex in $\tau_1$ is connected to a vertex in~$\tau_2$. This implies that $\tau \in Z(G)$.
    
    Conversely, let $\tau \in Z(G)$ with $|\tau| \ge 3$. Thus any $\sigma \subset \tau$ of size three induces either one or three edges in~$G$. We need to show that the induced graph $G[\tau]$ on $\tau$ is a union of at most two cliques. Fix an edge $(v,w)$ of~$G[\tau]$. Let $\tau_1 = \{v,w\} \cup \{u \in \tau \ : \ (u,v), (u,w) \in E(G)\}$ and $\tau_2 = \tau \setminus \tau_1$. 
    
    Let $u_1, u_2 \in \tau_1$. Then $(v,u_1), (v,u_2) \in E(G)$, and so $\{v,u_1,u_2\}$ contains at least two edges of~$G$; thus also $(u_1,u_2)$ must be an edge of~$G$. Thus $\tau_1$ is clique. 
    
    Let $u \in \tau_1$ and $u' \in \tau_2$. Then one of the tuples $(u',v)$ or $(u',w)$ is not an edge in~$G$, but since $(v,w)$ is an edge and $\{u',v,w\}$ induces either one or three edges, we have that both $(u',v)$ and $(u',w)$ are not edges of~$G$. Now $(u,v)$ is an edge of~$G$. Thus by the same reasoning $(u',u)$ is not an edge, and $\tau_1$ and $\tau_2$ are nonadjacent. 
    
    Lastly, we show that also $\tau_2$ is a clique. Let $u,u' \in \tau_2$. Since $(u,v)$ and $(u',v)$ are not present as edges in~$G$, and $\{u,u',v\}$ must induce on or three edges, we have that $(u,u')$ is indeed an edge of~$G$. Thus $\tau_2$ is a clique.
\end{proof}

\section{Vanishing of the homotopy groups}\label{sec:IntegerHomology}
We show that the separated deleted join $\widetilde Z$ is (homotopically) $d$-connected, thus part~(2) of Theorem~\ref{maintheorem} follows from part~(1) of Theorem~\ref{DZtheorem}. Key to this proof is the following deterministic statement which is a symmetric analogue of~\cite[Lemma 4.2]{KahleRandomClique}.
\begin{lemma}\label{symmetricNerveLemma}
Fix $d \geq 1$. Suppose $X$ is the separated deleted join of a flag complex so that
\begin{itemize}
    \item $X$ is on at least $2d + 4$ vertices
    \item every set of $2d + 1$ vertices that do not contain an antipodal pair have a common neighbor, and
    \item every set $\{v_1, ..., v_{2d}\}$ of vertices that do not contain an antipodal pair have $\lk_X(v_1) \cap \cdots \cap \lk_X(v_{2d})$ path connected,
\end{itemize}
then $X$ is $d$-connected.
\end{lemma}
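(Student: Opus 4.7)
The plan is to prove this by induction on $k$ for $0 \le k \le d$, showing $\pi_k(X) = 0$, following the structure of Kahle's proof of the analogous non-equivariant Lemma~4.2 in~\cite{KahleRandomClique}. A useful preliminary observation is that no simplex of the separated deleted join can contain an antipodal pair, since $(v,+1)$ and $(v,-1)$ fail the disjointness condition $\sigma \cap \tau = \emptyset$ defining $X$; consequently every image $f(\sigma)$ under any simplicial map into $X$ is automatically antipodal-free, and padding sets can always be chosen to avoid antipodes of vertices already present.

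For the base case $k = 0$, to join two non-antipodal vertices $u$ and $v$ in $X$, I would pad $\{u,v\}$ to an antipodal-free set of $2d+1$ vertices (using that $X$ is on sufficiently many vertices) and apply hypothesis~(2) to produce a common neighbor $w$, giving a path $u - w - v$. An antipodal pair $\{u,\iota(u)\}$ is then joined by bridging through any third vertex via two applications of the argument.

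For the inductive step, assume $X$ is $(k-1)$-connected with $1 \le k \le d$. By simplicial approximation it suffices to extend an arbitrary simplicial map $f \colon T \to X$ from a triangulation $T$ of $S^k$ to a simplicial map on some triangulation of $D^{k+1}$ with boundary $T$. I would construct the extension by descending through the skeleta of $T$. For each top-dimensional simplex $\sigma$, the image $f(\sigma)$ consists of at most $k+1 \le d+1$ antipodal-free vertices, so padding to an antipodal-free set of $2d+1$ vertices and invoking hypothesis~(2) yields a common neighbor $w_\sigma$, over which $\sigma$ can be coned off to fill in a $(k+1)$-cell. Adjacent top-dimensional fillings generally use distinct cone points; to reconcile them, hypothesis~(3) furnishes a path between the two cone points inside the intersection of the links of at most $2d$ antipodal-free vertices (the images of the involved simplices together with the appropriate padding), along which the two cones can be merged by subdivision. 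The inductive hypothesis then handles lower-dimensional discrepancies via $(k-1)$-connectivity, producing a globally consistent extension of $f$ over $D^{k+1}$.

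The main obstacle is organizing this iterated gluing into a well-defined simplicial map while verifying at each stage that the link intersections and padding sets involved remain antipodal-free, which requires careful combinatorial bookkeeping as one descends through the skeleta. The $2d+1$ and $2d$ appearing in the hypotheses (as opposed to $d+1$ and $d$ in the non-equivariant setting) reflect precisely the slack needed to accommodate antipodal exclusions at every step of this construction.
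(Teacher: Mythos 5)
Your proposal takes a genuinely different route from the paper. The paper inducts on $d$ (the parameter in the hypotheses, not the homotopy degree), covers $X$ by all closed vertex stars, and applies Bj\"orner's Nerve Lemma; verifying the connectivity hypotheses of the Nerve Lemma for $t$-fold star intersections is done by invoking Meshulam's theorem (for $t \geq 3$) and Kahle's Lemma~4.2 (for $t = 2$), and the nerve itself is shown $d$-connected via the separate claim that an induced subcomplex of the deleted join of a simplex on at least $2d+4$ vertices is $d$-connected. Your claim to be ``following the structure of Kahle's proof of the analogous non-equivariant Lemma~4.2'' is not accurate: Kahle, too, proves that lemma via a covering/nerve argument, not by a direct simplicial filling.

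Beyond the difference in route, there are genuine gaps in the proposed filling argument. First, the reconciliation step that you yourself flag as ``the main obstacle'' is not resolved: merging the cones $w_\sigma * \sigma$ and $w_{\sigma'} * \sigma'$ over a shared $(k-1)$-face $\tau$ via a path from $w_\sigma$ to $w_{\sigma'}$ in $\lk(f(\tau))$ produces a cylinder, not a filled $D^{k+1}$, and as you descend through the skeleta the discrepancies accumulate so that you need increasingly high connectivity of the relevant link intersections, not merely path-connectivity; this is precisely what the nerve-lemma machinery (via Meshulam and Kahle) is delivering in the paper, and ``the inductive hypothesis then handles lower-dimensional discrepancies'' is not a substitute. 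Second, hypothesis~(3) speaks only about $2d$-fold link intersections; it does not follow from it that $\lk(v_1) \cap \cdots \cap \lk(v_j)$ is path-connected for $j < 2d$, and padding does not fix this since any added vertex $u$ would cut the intersection down to $\lk(v_1) \cap \cdots \cap \lk(v_j) \cap \lk(u)$, which is not the set you need. The paper gets connectivity of the smaller intersections as a nontrivial consequence of Meshulam's theorem and Kahle's lemma applied to these intersections viewed as flag complexes in their own right. Finally, your induction is on the homotopy degree $k$ within a fixed $d$, but the hypotheses of the lemma scale with $d$, not $k$; the paper's induction on $d$ is what makes the hypotheses strong enough to pass to links and close the argument, and it is not clear how your induction on $k$ would close.
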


This proof makes use of the following version of the Nerve Lemma due to Bj\"orner. The \emph{nerve} of a collection of sets $U_1, \dots, U_n$ is the simplicial complex on vertex set~$[n]$ with $\sigma \subset [n]$ is a face if and only if $\bigcap_{i\in \sigma} U_i \ne \emptyset$.

\begin{theorem}[{\cite[Thm.~6]{Bjorner}}]
\label{BjornerTheorem}
Let $X$ be a connected regular $CW$ complex and $(X_i)_{i \in I}$ a family of subcomplexes that cover $X$. Suppose that for every finite nonempty intersection $X_{i_1} \cap \cdots \cap X_{i_t}$ is $(k - t + 1)$ connected for $t \geq 1$. Then $\pi_i(X)$ is isomorphic to $\pi_i(\mathcal{N})$, where $\mathcal{N}$ is the nerve of the cover, for all $i \leq k$. In particular if $\mathcal{N}$ is $k$-connected then so is $X$.
\end{theorem}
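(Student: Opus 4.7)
The plan is to apply Björner's Nerve Lemma (Theorem~\ref{BjornerTheorem}) to the cover $\mathcal{C} = \{\overline{\st}_X(v) : v \in V(X)\}$ of $X$ by closed stars of its vertices. Each closed star is a cone and hence contractible. A subset $\{v_1, \dots, v_t\} \subseteq V(X)$ spans a face of the nerve $\mathcal{N}$ precisely when the closed stars have nonempty geometric intersection, which, since $X$ is a flag complex, happens iff the subset has a common neighbor in $X$. A direct inspection of the separated-deleted-join structure shows that no vertex of $X$ is adjacent to both members of any antipodal pair $\{v^+, v^-\}$: adjacency to $v^+$ requires a vertex of the form $u^+$ with $u \in N_G(v)$ or $u^-$ with $u \notin N_G[v]$, whereas adjacency to $v^-$ forces the opposite. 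Hence no face of $\mathcal{N}$ contains an antipodal pair, and hypothesis~(2) guarantees that every subset of size at most $2d+1$ avoiding antipodal pairs lies in $\mathcal{N}$; equivalently, the $2d$-skeleton of $\mathcal{N}$ coincides with the $2d$-skeleton of $\partial O_n$, the boundary of the $n$-cross-polytope, where $n = |V(G)|$.

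A standard join-theoretic fact gives that the $k$-skeleton of $\partial O_n = (S^0)^{*n}$ is $(k-1)$-connected for $1 \le k \le n-1$; since $\pi_i$ for $i \le d$ depends only on the $(d+1)$-skeleton---which is contained in the $2d$-skeleton for $d \ge 1$---it follows that $\mathcal{N}$ is $d$-connected. It remains to verify Björner's intersection hypothesis at level $k=d$: every nonempty $t$-fold intersection must be $(d-t+1)$-connected. If $\{v_1, \dots, v_t\}$ spans a face of $X$, the intersection is a cone, hence contractible. Otherwise the flag condition forces the intersection to equal the common link $L_t := \bigcap_{i=1}^t \lk_X(v_i)$.

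The key observation is that $L_t$ is itself a flag complex whose vertex set contains no antipodal pair: if both $u^+, u^- \in L_t$, every $v_i$ would need to be adjacent to both, contradicting the remark above. Hence the original non-symmetric \cite[Lemma~4.2]{KahleRandomClique} applies to $L_t$: hypothesis~(2) in $X$ transfers to give every subset of at most $2d+1-t$ vertices of $L_t$ a common neighbor there, and hypothesis~(3) similarly gives every subset of at most $2d-t$ vertices of $L_t$ a path-connected common link. A short counting argument using hypothesis~(2) applied with $W \subseteq V(L_t)$ of maximal size also yields $|V(L_t)| \ge 2d-t+2$, ensuring the vertex-count hypothesis of Kahle's lemma is met. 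Kahle's lemma then gives that $L_t$ is $(2d-t)$-connected, which is at least $(d-t+1)$-connected for $d \ge 1$, meeting Björner's requirement. Björner's Nerve Lemma therefore gives $\pi_i(X) \cong \pi_i(\mathcal{N})$ for all $i \le d$, so the $d$-connectedness of $\mathcal{N}$ transfers to $X$. The main technical obstacle is the skeleton-connectivity claim for $\partial O_n$, which is standard but whose cleanest proof goes by induction on $n$ using the join-theoretic formula for connectivity of skeleta of joins of spheres.
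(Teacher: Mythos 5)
You were asked to prove Theorem~\ref{BjornerTheorem}, which is Bj\"orner's Nerve Lemma: the general statement that for a connected regular CW complex $X$ covered by subcomplexes whose $t$-fold intersections are $(k-t+1)$-connected, $\pi_i(X)\cong\pi_i(\mathcal N)$ for $i\le k$. In the paper this theorem is not proved; it is cited directly to \cite[Thm.~6]{Bjorner} and used as a black box. Your proposal does not prove it either: your very first sentence is ``apply Bj\"orner's Nerve Lemma (Theorem~\ref{BjornerTheorem}) to the cover,'' so you are assuming the statement you were asked to establish. What you have actually written is an argument for a \emph{different} result in the paper, namely Lemma~\ref{symmetricNerveLemma} (that the separated deleted join $X$ is $d$-connected under suitable common-neighbor and common-link hypotheses). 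A genuine proof of Theorem~\ref{BjornerTheorem} would have to work with homotopy colimits or an inductive Mayer--Vietoris/van~Kampen argument over the poset of nonempty intersections, none of which appears here.

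As a secondary remark, even read as a proposed proof of Lemma~\ref{symmetricNerveLemma}, your outline follows the same backbone as the paper's argument (cover by closed vertex stars, analyze $t$-fold intersections, identify them with common links, feed them to Kahle's Lemma~4.2 or Meshulam's bound) but contains a quantitative slip: you assert that $L_t=\bigcap_{i=1}^t\lk_X(v_i)$ is $(2d-t)$-connected. Transferring hypothesis (2) gives common neighbors for sets of $2d+1-t$ vertices in $L_t$, and Kahle's lemma then yields roughly $(d-\lceil t/2\rceil)$-connectivity for $L_t$, not $(2d-t)$-connectivity. That weaker bound still exceeds the $(d-t+1)$-connectivity Bj\"orner requires once $t\ge 2$ (with $t=1$ giving a contractible star), so the conclusion survives, but the inequality as you wrote it is false. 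The paper itself handles $t\ge 3$ by Meshulam's theorem and treats $t=2$ by a separate induction precisely because the naive bound for $t=2$ is not strong enough on its face; your direct appeal to Kahle's Lemma~4.2 is a reasonable alternative there, but it needs the corrected connectivity estimate to be stated accurately.
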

\begin{proof}[Proof of Lemma \ref{symmetricNerveLemma}]
We induct on $d$ and use Theorem \ref{BjornerTheorem}. For the base case we take $X$ on at least six vertices so that the common neighbors of any pair of non-antipodal vertices induce a connected graph and every set of three vertices which do not contain an antipodal pair have a common neighbor. We cover $X$ by (closed) vertex stars and apply Theorem \ref{BjornerTheorem}. Stars are contractible, so we can start by checking that 2-fold intersection of vertex stars are path-connected. If $u$ and $v$ are adjacent in $X$ then by the flag condition $\st(u) \cap \st(v) = \st(\{u, v\})$ and the star of a simplex is contractible. If $u$ and $v$ are not adjacent then $\st(u) \cap \st(v) = \lk(u) \cap \lk(v)$. If $u$ and $v$ are antipodal then $\lk(u) \cap \lk(v) = \emptyset$ and there is nothing to check as antipodal vertices in a separated deleted join have no common neighbors. Thus $u$ and $v$ are non-antipodal, non-adjacent vertices so $\lk(u) \cap \lk(v)$ is path connected by assumption. 

Any set of three vertices $\{u, v, w\}$ that have no antipodal pair have a common neighbor so $\st(u) \cap \st(v) \cap \st(w) \neq \emptyset$ in this case and so by Theorem \ref{BjornerTheorem}, it suffices to check that the nerve of this cover of $X$ is simply-connected. 

By the assumption on $X$ the nerve is an induced flag subcomplex of the (ordinary) deleted join of a simplex with $X$ having at least six vertices. This will be simply connected by the following claim, which we will also use in the inductive step.
\begin{claim}\label{SimplexDelJoinClaim}
If $X$ is an induced (flag) subcomplex of the deleted join of a simplex and $V(X) \geq 2d + 4$ then $X$ is $d$-connected.
\end{claim}
\begin{proof}
 If $X$ has at least $d + 2$ vertices on, without loss of generality, the plus side, then we cover $X$ by plus vertex stars. Since the plus side of $X$ forms a simplex the nerve of this cover is a simplex on at least $d + 2$ vertices, so it is $d$-connected. And by the flag condition $\st(v_1) \cap ... \cap \st(v_t) = \st(\{v_1 \cap \cdots \cap v_t\})$ is contractible for any $\{v_1, ..., v_t\}$ which are plus vertices since all simplices on the plus side are included and $X$ is flag.
\end{proof}

We now turn our attention to the inductive step. As before we apply Theorem \ref{BjornerTheorem} to the covering of $X$ by \emph{all} vertex stars. Let $v_1, ..., v_t$ be a collection of $k$ vertices so that $\st(v_1) \cap \cdots \cap \st(v_t)$ is nonempty. Then $v_1, ..., v_t$ contains no antipodal pair. If $v_1, ..., v_t$ form a clique in $X$ then $\st(v_1) \cap \cdots \cap \st(v_t) = \st(\{v_1, ..., v_t\})$ by the flag condition. In this case then $\st(v_1) \cap \cdots \cap \st(v_t)$ is contractible. Thus we assume that $v_1, ..., v_t$ do not induce a clique and therefore $\st(v_1) \cap \cdots \cap \st(v_t) = \lk(v_1) \cap \cdots \cap \lk(v_t)$. Within $\lk(v_1) \cap \cdots \cap \lk(v_t)$ we have that every set of $2d - t + 1$ vertices not containing an antipodal pair have a common neighbor and that every set of $2d - t$ vertices induce a connected graph among their common neighbors. However, $\lk(v_1) \cap \cdots \cap \lk(v_k)$ contains no antipodal pairs since antipodal pairs in the separated deleted join do not have any common neighbors. So $\lk(v_1) \cap \cdots \cap \lk(v_t)$ is just a clique complex in which every set of $2d - t + 1$ vertices have a common neighbor. Thus a result of Meshulam \cite{MeshulamNerve}, cited as Theorem 3.1 in \cite{KahleRandomClique}, $\lk(v_1) \cap \cdots \cap \lk(v_t)$ is $(d - \lceil \frac{t + 1}{2} \rceil)$-connected. We only need to check that it is $(d - t + 1)$-connected and this holds as long as $t - 1 \geq \lceil \frac{t+1}{2} \rceil$, i.e. as long as $t \geq 3$. For $t = 2$, we have that $\lk(v_1) \cap \lk(v_2)$ is a flag complex in which every $2(d - 1) + 1$ vertices have a common neighbor and every the common neighbors of every set of $2(d - 1)$ vertices induce a connected graph, so by \cite[Lemma 4.2]{KahleRandomClique}, $\lk(v_1) \cap \lk(v_2)$ is $(d - 1)$-connected which is what we need to apply Theorem \ref{BjornerTheorem}. All that remains to check is that the nerve given by covering by vertex stars is $d$-connected. The nerve is an induced subcomplex of the deleted join of a simplex, but by the condition that $X$ is on at least $2d + 4$ vertices we have that the nerve of $X$ is $d$-connected by Claim \ref{SimplexDelJoinClaim}. 
\end{proof}

We are now ready to prove part (1) of Theorem \ref{DZtheorem}. As we will apply Lemma \ref{symmetricNerveLemma}, we will ultimately want to understand the structure of graphs induced on common neighbors of vertices in the separated deleted join of a random flag complex. If $X$ is a flag complex and $\{+v_1, ..., +v_k, -u_1, ..., -u_{\ell}\}$ is a set of vertices in $\mdeljoin{X}$ then the graph induced on the common neighbors of $\{+v_1, ..., +v_k, -u_1, ..., -u_{\ell}\}$ is the following auxiliary graph obtained from the graph of $X$: Let $A$ be the set of common neighbors in $X$ of $v_1, ..., v_k$ other than $u_1,...,u_{\ell}$ and neighbors of (any of) $u_1, ..., u_{\ell}$ and $B$ be the common neighbors in $X$ of $u_1, ..., u_{\ell}$ other than $v_1, ..., v_k$ and the neighbors of $v_1, .., v_k$. The graph induced on the common neighbors of $\{+v_1, ..., +v_k, -u_1, ..., -u_{\ell}\}$ is the graph on vertex set $A \cup B$ with an edge between two vertices of $A$ if and only if that particular edge is present in $X$ and likewise for edges between vertices of $B$, but each possible edge between $A$ and $B$ is included if and only if that edge is \emph{excluded} from $X$.

In the case that $X$ is the flag complex of an Erd\H{o}s--R\'{e}nyi random graph then for a fixed set $\{+v_1, ..., +v_k, -u_1, ..., -u_{\ell}\}$ of vertices in $\mdeljoin{X}$, the graph induced on $\lk(+v_1) \cap \cdots \cap \lk(+v_k) \cap \lk(-u_1) \cap \dots \cap \lk(-u_{\ell})$ is a random graph sampled by the following model:

\begin{definition}
For $n \in \mathbb{N}$, $p_A, p_B, p_{eA}, p_{eB}, p_{eAB}$ probabilities, let $H(n, p_A, p_B, p_{eA}, p_{eB}, p_{eAB})$ be the probability space on graphs on at most $n$ vertices sampled by starting with ground set $[n]$ and first building two disjoint vertex sets $A$ and $B$ by including each vertex of $[n]$ in $A$ with probability~$p_A$, in $B$ with probability $p_B$, or excluded from the graph entirely with probability $1 - p_A - p_B$. Once $A$ and $B$ are set, each edge between vertices of $A$ is included independently with probability $p_{eA}$, each edge between vertices of $B$ is included independently with probability $p_{eB}$, and edge with one endpoint in $A$ and one endpoint in $B$ is included with probability $p_{eAB}.$

As it will be a common special case, for $p_A$, $p_B$ and $q$ we will use $H(n, p_A, p_B, q)$ to refer to $H(n, p_A, p_B, q, q, 1 - q)$.
\end{definition}

The proof of part (1) of Theorem \ref{DZtheorem} is almost immediate from Lemma \ref{symmetricNerveLemma} and the following:
\begin{lemma}\label{AuxGraphLemma1}
Fix $d \geq 1$, if $\alpha < 1/d$ then with overwhelming probability if $k + \ell = 2d + 1$, $H \sim H(n - (2d + 1), n^{-k\alpha}(1 - n^{-\alpha})^{\ell}, n^{-\ell \alpha}(1 - n^{-\alpha})^k, n^{-\alpha})$ is nonempty and if $k + \ell = 2d$ then $H \sim H(n - 2d, n^{-k\alpha}(1 - n^{-\alpha})^{\ell}, n^{-\ell \alpha}(1 - n^{-\alpha})^k, n^{-\alpha})$ is connected.
\end{lemma}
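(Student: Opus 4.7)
My plan is to deduce both assertions from Chernoff concentration of the random vertex sets $A, B$ together with standard Erd\H{o}s--R\'enyi connectivity estimates. Since $d$ is a fixed constant, $(1-n^{-\alpha})^{k}$ and $(1-n^{-\alpha})^{\ell}$ tend to $1$, and thus $\E[|A|] = \Theta(n^{1-k\alpha})$ and $\E[|B|] = \Theta(n^{1-\ell\alpha})$. By symmetry assume $k \le \ell$; since $k+\ell \le 2d+1$ this forces $k \le d$, so $k\alpha < 1$. A Chernoff bound then yields $|A| \ge \tfrac12 \E[|A|] = \Omega(n^{1-k\alpha})$ with failure probability $\exp(-\Omega(n^{1-k\alpha})) = n^{-\omega(1)}$. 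The nonemptiness case $k+\ell = 2d+1$ is immediate from this: $|A|\ge 1$ with overwhelming probability.

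For the connectedness case $k+\ell = 2d$, I would split according to whether $\ell$ exceeds $d$. In the \emph{unbalanced subcase} $k \le d-1$ (so $\ell \ge d+1$), we have $(k+1)\alpha < 1$, so $|A|n^{-\alpha} = \Omega(n^{1-(k+1)\alpha})$ grows polynomially and the intra-$A$ subgraph of $H$, distributed as $G(|A|, n^{-\alpha})$, is connected with overwhelming probability by the classical isolated-vertex-dominated connectivity argument. Each of the at most $n$ vertices $b\in B$ has probability $(n^{-\alpha})^{|A|} = n^{-\omega(1)}$ of having no cross-neighbour in $A$, so a union bound lets every $b\in B$ attach to the connected $A$-cluster through a cross-edge, giving connectivity of $H$.

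In the \emph{balanced subcase} $k = \ell = d$, both $|A|$ and $|B|$ are $\Omega(n^{1-d\alpha})$, but the intra-part probability $n^{-\alpha}$ may be below the connectivity threshold for the intra-part graph (as soon as $\alpha > 1/(d+1)$), so connectivity has to be extracted from the cross-edges, which are present independently with probability $1-n^{-\alpha}$. I would verify by union bounds with terms of the form $|A|\cdot (n^{-\alpha})^{|B|}$ and $\binom{|A|}{2}\cdot (2n^{-\alpha})^{|B|}$ (and their $A\leftrightarrow B$ symmetric counterparts), each $n^{-\omega(1)}$, that (a) every vertex has a cross-neighbour on the opposite side and (b) every pair of same-side vertices shares a common cross-neighbour. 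Together (a) and (b) imply that any two vertices of $H$ are joined by a path of length at most $3$, so $H$ is connected.

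The main obstacle is this balanced subcase: neither intra-part subgraph need be individually connected, so connectivity must be deduced entirely from the bipartite cross-edges. The redeeming feature is that the cross-edge probability $1-n^{-\alpha}$ is so close to $1$ that once $|A|,|B|$ have been concentrated polynomially, the union bounds above have super-polynomially small failure, yielding overwhelming probability for connectivity.
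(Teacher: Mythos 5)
Your proposal is correct and follows the same overall decomposition as the paper: handle nonemptiness for $k+\ell=2d+1$ by Chernoff on $|A|$ with $k\le d$, and for $k+\ell=2d$ split into the balanced case $k=\ell=d$ and the unbalanced case $k\le d-1$. The unbalanced case is treated essentially identically (intra-$A$ connectivity from the two-parameter Erd\H{o}s--R\'enyi threshold plus attaching each $b\in B$ through a cross-edge). The one place where you genuinely diverge is the balanced case: the paper just remarks that the dense random bipartite graph on $A,B$ with edge probability $1-n^{-\alpha}\to 1$ is ``straightforward to check'' to be connected, or defers to the much heavier spectral gap result (Lemma~\ref{SymmetricCase}). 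Your diameter-at-most-$3$ argument, verifying via union bounds that (a) every vertex has a cross-neighbour and (b) every same-side pair has a common cross-neighbour, is a clean, self-contained, and more elementary substitute: it costs only two routine union-bound estimates, does not rely on any spectral machinery, and makes the ``with overwhelming probability'' claim explicit rather than leaving it implicit. It is a legitimate and arguably preferable filling-in of the step the paper glosses over.
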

\begin{proof}
First suppose that $k + \ell = 2d + 1$ then without loss of generality $k \leq d$. In this case then $|A|$ in $H$ is binomially distributed with $n - (2d + 1)$ trials and success probability $\Theta(n^{-k\alpha})$ with $k\alpha < 1$. So the probability that $A$ is empty is $(1 - \Theta(n^{-k\alpha}))^{n - (2d + 1)} = \exp(\Omega(-n^{1 - k\alpha})$. Thus with overwhelming probability $H$ is nonempty. 

Now we suppose that $k + \ell = 2d$. If $k = \ell = d$ then we have that with overwhelming probability $A$ and $B$ both have size at least $\Theta(n^{1 - d\alpha}) \geq n^{\epsilon} \rightarrow \infty$. The edges going between $A$ and $B$ form a random bipartite graph with each edge included with probability $1 - n^{-\alpha} \rightarrow 1$. It is straightforward to check that such a random bipartite graph (where the edge probability goes to 1 and the size of the sets on each side go to infinity at the same rate) is connected. This fact also follows from Lemma~\ref{SymmetricCase}. This handles the case that $k = \ell$.  On the other hand suppose that without loss of generality that $k \leq d - 1.$ Then the graph induced on $A$ is a two parameter random graph with ground set $[n]$ each vertex included with probability $n^{-k\alpha}(1 - n^{-\alpha})^{\ell}$ and each edge included with probability $n^{-\alpha}$, as $n^{-k  \alpha}(1 - n^{-\alpha})^{\ell}n^{-\alpha} = \omega\left(\frac{\log n}{n}\right)$ we have that with overwhelming probability the graph induced on $A$ is connected. This follows from the Erd\H{o}s--R\'{e}nyi connectivity threshold, and is covered by Lemma \ref{twoparametergeneral}. Conditioned on $A$ inducing a connected graph on approximately its expected number of vertices, both of which are overwhelmingly likely, we observe that each vertex of $B$, if there are any, sends a binomially distributed number of edges to $A$ with $\Theta(n^{1 - k \alpha})$ trials and success probability $1 - n^{-\alpha} \rightarrow 1$. By ordinary large deviation inequalities then with overwhelming probability every vertex of $B$ sends an edge to the connected graph $A$ and hence $H$ is connected.
\end{proof}

\begin{proof}[Proof of part (1) of Theorem \ref{DZtheorem} and parts (1) and (2) of Theorem \ref{maintheorem}]
We combine Lemma \ref{symmetricNerveLemma} and Lemma~\ref{AuxGraphLemma1}. Suppose $\alpha < 1/d$ and let $X \sim X(n, n^{-\alpha})$ we show that $\widetilde Z = \mdeljoin{X}$ satisfies the assumptions of Lemma~\ref{symmetricNerveLemma}. If $S$ is a set of vertices in $\widetilde Z$ of size $2d + 1$ without an antipodal pair then the graph induced on the common neighbors of $S$ are distributed as 
$$H \sim H(n - (2d + 1), n^{-k\alpha}(1 - n^{-\alpha})^{\ell}, n^{-\ell \alpha}(1 - n^{-\alpha})^k, n^{-\alpha})$$ with $k$ the number of plus vertices and $\ell$ the number of minus vertices. Thus by Lemma~\ref{AuxGraphLemma1} with overwhelming probability the vertices of $S$ have a common neighbor. As this happens with overwhelming probability for each of the polynomially many choices of~$S$, each such $S$ has a common neighbor. Verifying the connectivity condition when $|S| = 2d$ follows from Lemma \ref{AuxGraphLemma1} in the same way. Therefore by Lemma \ref{symmetricNerveLemma}, $\widetilde Z$ is $d$-connected.

As $\widetilde Z$ is simply-connected and double covers~$Z$, we have that $\pi_1(Z) = \Z/2\Z$, so we have part~(1) of Theorem~\ref{maintheorem}. Part~(2) of Theorem~\ref{maintheorem} follows from the fact that maps from simply-connected spaces into $Z$ factor through $\widetilde Z$.
\end{proof}


\section{A random Borsuk--Ulam theorem}
\label{sec:random-bu}

Here we prove Theorem~\ref{thm:random-bu}, that is, that for $d \geq 1$ and $1/(d + 1) < \alpha < 1/d$ a complex $\widetilde Z \sim \widetilde Z(2n, n^{-\alpha})$ satisfies the Borsuk--Ulam theorem for maps to $\R^{d+1}$ with high probability. We have just established part~(1) of Theorem~\ref{DZtheorem} that $\widetilde Z$ is $d$-connected with high probability. If $X$ is a $d$-connected space with a free involution $\iota\colon X \to X$ then there is a map $h\colon S^{d+1} \to X$ such that $h(-x) = \iota(h(x))$ for all $x \in S^{d+1}$. Thus with high probability we have a map $h \colon S^{d+1} \to \widetilde Z$ with $h(-x) = \iota(h(x))$ for all~$x \in S^{d+1}$. Given any map $f\colon \widetilde Z \to \R^{d+1}$, the composition $f\circ h\colon S^{d+1} \to \R^{d+1}$ identifies two antipodal points, $f(h(-x)) = f(h(x))$ for some~$x$, by the Borsuk--Ulam theorem. Then $f(\iota(y)) = f(y)$ for $y =h(x)$.

To derive Corollary~\ref{cor:cliques} as a consequence, we use the standard proof scheme for topological Radon--type results; see Matou\v sek~\cite{Matousek} for an introduction. let $G \sim G(n, n^{-\alpha})$ be an Erd\H os--R\'enyi random graph, with $\alpha$ as above. Let $f\colon V(G) \to \R^d$ be a map. By linearly extending this map onto faces we obtain a map $F\colon |\overline G| \to \R^d$. This induces a map $h \colon |\longJF{G}| \to \R^{d+1}$ defined by $h(\lambda x + (1-\lambda) y) = (\lambda, \lambda F(x))$. Here we think of any point $z$ in the join $|\overline G^{*2}|$ as an abstract convex combination $\lambda x + (1-\lambda) y$ with $\lambda \in [0,1]$ and $x,y \in |\overline G|$.

Since $G \sim G(n, n^{-\alpha})$ we have that $\longJF{G} \sim \widetilde Z(2n, n^{-\alpha})$ and thus by Theorem~\ref{thm:random-bu} there is an $\lambda x + (1-\lambda) y \in |\longJF{G}|$ with $h(\lambda x + (1-\lambda) y) = h(\iota(\lambda x + (1-\lambda) y))$. This implies $\lambda = 1-\lambda$ and thus $\lambda = \frac12$, and that $\frac12 F(x) = \frac12 F(y)$ so $F(x) = F(y)$. The points $x$ and $y$ are in disjoint, nonadjacent faces $\sigma$ and $\tau$ of the clique complex $\overline{G}$. Thus $F(\sigma)$ and $F(\tau)$, which are the convex hulls of $f(\sigma)$ and $f(\tau)$, intersect in~$\R^d$.

\section{Vanishing of the rational homology groups}\label{sec:RationalHomology}
The goal of this section is to prove the following theorem which implies points (2) and (3) of Theorem \ref{DZtheorem} and therefore (3) of Theorem \ref{maintheorem}. 
\begin{theorem}\label{RationalHomologyVanishingTheorem}
For $d \geq 1$ if $\alpha < 1/d$ then with overwhelming probability $H_{2d}(\widetilde Z, \Q) = 0$ for $\widetilde Z \sim \widetilde Z(2n, n^{-\alpha})$ and if $\alpha < 1/(d+ 1)$ then with overwhelming probability $H_{2d - 1}(\widetilde Z; \Q) = 0$. 
\end{theorem}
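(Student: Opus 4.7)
The plan is to apply Garland's method in the form of the Ballmann--Świątkowski theorem stated in Section~\ref{sec:Prelims}. To obtain $H_{2d}(\widetilde Z;\Q) = 0$ I would set that theorem's dimension parameter to $2d+1$, and verify two things about $\widetilde Z\sim\widetilde Z(2n,n^{-\alpha})$: \emph{purity}, that every face of dimension at most $2d+1$ is contained in a $(2d+1)$-face, and \emph{a spectral gap bound}, that for every $(2d-1)$-face $\sigma$ the $1$-skeleton of $\lk_{\widetilde Z}(\sigma)$ is connected with $\lambda_2 > 1 - 1/(2d+1)$. For $H_{2d-1}(\widetilde Z;\Q)=0$ the analogous conditions with parameter $2d$ are required, namely purity up to dimension $2d$ and $\lambda_2 > 1 - 1/(2d)$ on links of $(2d-2)$-faces; it is here that the stronger hypothesis $\alpha<1/(d+1)$ enters, to keep the expected sizes of the relevant links large enough that the spectral estimate still goes through. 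Extending a face one vertex at a time via a common neighbor reduces both purity conditions to the existence of a common neighbor for every non-antipodal vertex set of size $2d+1$ (respectively $2d$), which is Lemma~\ref{AuxGraphLemma1} in the first case and follows by the same computation in the tighter regime in the second.

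The new and principal ingredient is a spectral gap estimate for the auxiliary random graph model $H(m,p_A,p_B,q,q,1-q)$, to be applied with $m = 2n-O(1)$, $q = n^{-\alpha}$, $p_A = n^{-k\alpha}(1-n^{-\alpha})^\ell$, $p_B = n^{-\ell\alpha}(1-n^{-\alpha})^k$, and $k+\ell \in \{2d-2,2d-1\}$. I would isolate a lemma showing that in these regimes the normalized Laplacian of $H$ has, with overwhelming probability, all nonzero eigenvalues in $(1-o(1),1+o(1))$; this is stronger than what Ballmann--Świątkowski needs. A union bound over the polynomially many choices of $\sigma$ then upgrades the individual overwhelming-probability statements to a single high-probability statement for $\widetilde Z$, giving Theorem~\ref{RationalHomologyVanishingTheorem}.

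To prove the spectral lemma I would exploit the fact that, because $q=n^{-\alpha}\to 0$, the graph $H$ is a sparse Erd\H os--R\'enyi perturbation of a dense random bipartite graph between $A$ and $B$ whose edge probability is $1-q$. The normalized Laplacian of $K_{|A|,|B|}$ has spectrum $\{0,1,\dots,1,2\}$, so the target eigenvalue cluster at $1$ is already exact in the limiting geometry and only a quantitative perturbation bound is missing. Three standard ingredients assemble it: Chernoff tails showing that every vertex degree concentrates around its mean, so that $D^{-1/2}$ is close to its deterministic analogue; a matrix Bernstein estimate on $\|D^{-1/2}(A-\E A)D^{-1/2}\|$ to control the fluctuation; and a direct calculation identifying $D^{-1/2}\,\E A\,D^{-1/2}$ as a rank-two perturbation of the identity with eigenvalues $\pm(1-o(1))$ and zeros otherwise. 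The main obstacle is the asymmetric case $k\ne\ell$, where $|A|$ and $|B|$ can differ by a polynomial factor or where one of them may be of only constant order; the normalization $D^{-1/2}$ is designed precisely to absorb this imbalance, but one must verify carefully that the within-side sparse perturbations, whose normalized entries are largest on the small side, do not produce a macroscopic outlier in the spectrum. It is exactly in controlling these contributions that the lower bounds on $\min(|A|,|B|)$ supplied by $\alpha<1/d$ (respectively $\alpha<1/(d+1)$) are used.
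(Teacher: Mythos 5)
Your plan diverges from the paper's proof at its structural core, and the divergence introduces a real gap. The paper does \emph{not} apply Garland's method directly to $\widetilde Z$. It first proves a reduction (Lemma~\ref{GluingStep}): every $2d$-cycle is rationally homologous to a cycle in the subcomplex $\Pi_{d+1,d+1}$, and every $(2d-1)$-cycle to one in $\Pi_{d,d+1}\cup\Pi_{d+1,d}$. This reduction is established by an inductive ``filling'' argument using Corollary~\ref{TwoParameterRationalConnectivity} on one-sided links, and it is essential: inside $\Pi_{d+1,d+1}$ the $1$-skeleton of the link of a $(d,d)$-face is \emph{purely bipartite} (adding two vertices on the same side would overflow that side), which is why the paper can model those links as $H(\cdot,p_A,p_B,0,0,1-q)$ and invoke the bipartite spectral machinery (Theorem~\ref{bipartiteexpansion}, built on Bilu--Linial). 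Garland is then applied to $\Pi_{d+1,d+1}$ and to $\Pi_{d,d+1}$ (plus a Mayer--Vietoris argument to glue), not to $\widetilde Z$.

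Your direct application of Ballmann--\'{S}wi\c{a}tkowski to $\widetilde Z$ forces you to control the link, \emph{inside} $\widetilde Z$, of every $(2d-1)$-face (resp.\ $(2d-2)$-face), so the link graph is the full model $H(n-O(1),p_A,p_B,q,q,1-q)$ with within-side edges, and $k+\ell$ ranges over all splits of $2d$ (resp.\ $2d-1$, not $\{2d-2,2d-1\}$ as you wrote). Two problems arise. First, the extreme splits (e.g.\ $k=2d,\ell=0$, a $2d$-clique on one side) yield links where one of $A,B$ is empty with high probability and the link is just a sparse Erd\H os--R\'enyi graph; no bipartite/perturbative picture applies and you would need a separate HKP-style argument there. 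Second — and this is the more serious flaw in the stated perturbation plan — even in the ``main'' asymmetric split the within-side edges are not a small perturbation of $K_{|A|,|B|}$. Take $k+\ell=2d-1$ with $(k,\ell)=(d,d-1)$, so $|A|\approx n^{1-d\alpha}$ and $|B|\approx n^{1-(d-1)\alpha}$. Each $B$-vertex has about $|A|=n^{1-d\alpha}$ neighbors across and also about $|B|\,n^{-\alpha}=n^{1-d\alpha}$ neighbors within $B$; the within-side contribution to the degree is of the \emph{same order} as the bipartite one, independently of how small $\alpha$ is. For the split $(d+1,d-1)$ occurring in the $H_{2d}$ case the within-$B$ degree even \emph{dominates} the cross degree by a factor $n^{\alpha}$. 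So $D^{-1/2}\,\E A\,D^{-1/2}$ is not a rank-two matrix plus something small, the eigenvalue cluster at $1$ is not ``already exact in the limiting geometry,'' and a matrix-Bernstein bound on $\|D^{-1/2}(A-\E A)D^{-1/2}\|$ alone cannot close the argument. The assertion that the lower bounds on $\min(|A|,|B|)$ coming from $\alpha<1/d$ or $\alpha<1/(d+1)$ control these within-side contributions is precisely what fails: the ratio of within-side to cross degree on the large side is $\Theta(1)$ or worse for all $\alpha$ in the relevant range. The paper's gluing step exists to remove these within-side edges from the links entirely, and without an analogue of it your approach does not go through as described.
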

The key tool to prove Theorem \ref{RationalHomologyVanishingTheorem} is Garland's method. The first paper to apply Garland's method to random complexes was a paper of Hoffman, Kahle, and Paquette \cite{HKP}. The main result of \cite{HKP} is a result about the concentration of the eigenvalues of an Erd\H{o}s--R\'{e}nyi random graph. While there is extensive literature on eigenvalues of random graphs, the key contribution of \cite{HKP} is a concentration of measure result for the eigenvalue of an Erd\H{o}s--R\'{e}nyi random graph. Specifically the authors of \cite{HKP} show that the nontrivial eigenvalues of an Erd\H{o}s--R\'{e}nyi random graph are concentrated around 1 with overwhelming probability. The following formulation is weaker than the full statement of the main theorem of \cite{HKP}, but is simpler to state and will be strong enough to prove the results that we need. The two parameter Erd\H{o}s--R\'enyi random graph $G(n; p_0, p_1)$ is sampled by starting with a ground set of possible vertices $[n]$, including each vertex independently  with probability $p_0$ and including each edge between existing vertices with probability $p_1$.
\begin{lemma}[Lemma 22 of \cite{DochtermannNewman}, immediate corollary of main result of \cite{HKP}]\label{twoparametergeneral}
Let $G \sim G(n; p_0, p_1)$ be a two parameter Erd\H{o}s--R\'{e}nyi random graph with the property that $p_0 n \rightarrow \infty$ as $n \rightarrow \infty$. Furthermore assume that $p_0$ and $p_1$ satisfy
\[p_0p_1 = \omega \left( \frac{\log n}{n} \right).\]
Then for any $M \geq 0$ and $\epsilon > 0$ we have the following:
\begin{itemize}
\item For $n$ large enough $\lambda_2(G) \geq 1 - \epsilon$ with probability at least $1 - (np_0)^{-M}$.
\item In particular $G$ is connected with high probability. 
\end{itemize}
\end{lemma}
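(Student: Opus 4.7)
The plan is to reduce the two-parameter model $G(n; p_0, p_1)$ to an ordinary Erd\H os--R\'enyi random graph of size $N \sim \Bin(n, p_0)$ and then invoke the spectral concentration result of \cite{HKP} as a black box. First, I would condition on which of the $n$ potential vertices are selected in the $p_0$-round, obtaining a random vertex set $V'$ with $N = |V'|$. Since $np_0 \to \infty$, a standard Chernoff bound yields $N \ge \tfrac12 np_0$ with failure probability $\exp(-\Omega(np_0))$, which decays faster than any polynomial in~$np_0$ and in particular is much smaller than $(np_0)^{-M}$ for $n$ large.

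Conditioned on $V'$, the graph $G$ is distributed exactly as $G(N, p_1)$ on vertex set~$V'$. The hypothesis $p_0 p_1 = \omega(\log n / n)$ gives
\[
    N p_1 \;\ge\; \tfrac12 n p_0 p_1 \;=\; \omega(\log n),
\]
and since $N \le n$ this also reads as $Np_1 = \omega(\log N)$, so the density hypothesis of the HKP theorem is satisfied. For any fixed $\epsilon > 0$ and any $M' > 0$, that theorem then gives a conditional bound $\Pr[\lambda_2(G) < 1 - \epsilon \mid V'] \le N^{-M'}$ once $N$ is sufficiently large; using $N \ge \tfrac12 np_0$ this is at most $2^{M'}(np_0)^{-M'}$. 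Taking $M' = M + 1$ and combining with the Chernoff bound via a union bound produces an unconditional failure probability of at most $(np_0)^{-M}$ for $n$ large. The connectivity conclusion is immediate, since $\lambda_2 > 0$ characterizes connectedness of $G$ on~$V'$.

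The main obstacle is essentially bookkeeping: one must verify carefully that the HKP density hypothesis $N p_1 = \omega(\log N)$ continues to hold after conditioning on the typical value of $N$, and that the conditional error bound $N^{-M'}$ can be translated into a bound of the required form $(np_0)^{-M}$ without losing the polynomial rate. No further probabilistic input is required — the substance of the statement lies entirely in the cited theorem of \cite{HKP}, and this lemma is a matter of packaging that theorem into a form convenient for the two-parameter model that arises from links in $\widetilde Z$.
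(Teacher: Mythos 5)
The paper itself does not prove this lemma; it cites it verbatim from Lemma~22 of \cite{DochtermannNewman} and notes that it follows from the main result of \cite{HKP}. Your reconstruction is essentially the intended one: condition on the Bernoulli($p_0$) vertex selection to reduce $G(n;p_0,p_1)$ to an ordinary $G(N,p_1)$ on the selected set, use a Chernoff bound to pin $N$ above $\tfrac12 np_0$ with failure probability $\exp(-\Omega(np_0))$, check that $Np_1 \ge \tfrac12 np_0 p_1 = \omega(\log n) = \omega(\log N)$ so the HKP density hypothesis holds, apply HKP conditionally with a slightly larger exponent $M' = M+1$, and absorb the constant and the Chernoff term into the $(np_0)^{-M}$ budget. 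The connectivity conclusion, via $\lambda_2 > 0$, is handled correctly. The bookkeeping you flag (that the conditional bound $N^{-M'}$ and the conditional density hypothesis must be stated on the high-probability event $N \ge \tfrac12 np_0$, and that the HKP threshold ``for $n$ large'' is met because $N \to \infty$) is exactly what a careful write-up needs, and you have addressed it. Nothing is missing.
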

A useful corollary to this is the following. This corollary is a ``two parameter" version of one of the main result of \cite{KahleRational}.

\begin{corollary}\label{TwoParameterRationalConnectivity}
Let $G \sim G(n; p_0, p_1)$ be a two parameter Erd\H{o}s--R\'{e}nyi random graph with the property that $p_0 n \rightarrow \infty$ as $n \rightarrow \infty$. Furthermore assume that 
$p_0$ and $p_1$ satisfy
\[p_0p_1 = \omega \left( \left( \frac{\log n}{n} \right)^{1/d} \right)\]
Then with overwhelming probability the flag complex of $G$ is $(d - 1)$-rationally homologically connected.
\end{corollary}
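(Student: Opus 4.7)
The plan is to apply the Ballmann--\'{S}wi\c{a}tkowski form of Garland's method inductively: for each $k = 1, 2, \dots, d$, I will verify its two hypotheses on $X = \overline G$ with overwhelming probability, thereby concluding $H_{k-1}(X; \Q) = 0$. The base case $k = 1$ is just connectivity of $\overline G$, which follows immediately from Lemma~\ref{twoparametergeneral} because for $d \geq 1$ we have $p_0 p_1 = \omega((\log n/n)^{1/d}) = \omega(\log n/n)$. For each $k \geq 2$ the two hypotheses to verify are: (i) every face of $\overline G$ of dimension at most $k$ lies in some $k$-face, and (ii) for every $(k-2)$-face $\sigma$ the link $\lk_{\overline G}(\sigma)$ is connected and its $1$-skeleton has spectral gap greater than $1 - 1/k$. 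In both cases I will establish the bound with probability $1 - n^{-\omega(1)}$ for each fixed small face and then union-bound over the polynomially many such faces.

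The central estimate driving both (i) and (ii) is that $p_0 p_1^k = \omega(\log n / n)$ for every $k \leq d$. This uses $p_0 \leq 1$, which gives $p_1 \geq p_0 p_1$, so
\[
    p_0 p_1^k \;=\; (p_0 p_1)\cdot p_1^{k-1} \;\geq\; (p_0 p_1)^k \;=\; \omega\!\left((\log n / n)^{k/d}\right) \;=\; \omega(\log n / n),
\]
the last equality because $k/d \leq 1$ and $\log n / n \to 0$. For purity (i), fix $k' \leq d$ and a specific $k'$-subset $C \subset [n]$: conditional on $C \subset V(G)$ and $C$ being a clique, the number of common neighbors of $C$ in $G$ is $\mathrm{Bin}(n - k',\ p_0 p_1^{k'})$, so the probability that $C$ has no common neighbor is at most $\exp(-\Omega(n p_0 p_1^{k'})) = n^{-\omega(1)}$; a union bound over $\leq n^{k'}$ cliques and iteration of common-neighbor extensions then forces every face of dimension $\leq d$ to lie in a $d$-face. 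For (ii), the link of a $(k-1)$-clique $\sigma$ in $\overline G$ is the flag complex of $G[N(\sigma)]$, which conditional on $\sigma$ is distributed as the two-parameter random graph $G(n - k + 1;\ p_0 p_1^{k-1},\ p_1)$. Lemma~\ref{twoparametergeneral} applied with any fixed $\epsilon < 1/k$ and arbitrarily large $M$, together with the verified bound $(p_0 p_1^{k-1})\cdot p_1 = p_0 p_1^k = \omega(\log n/n)$ and $n \cdot p_0 p_1^{k-1} \to \infty$, yields spectral gap $\geq 1 - \epsilon > 1 - 1/k$ for each fixed $\sigma$ with probability $1 - n^{-\omega(1)}$; union-bounding over the $\leq n^{k-1}$ many $(k-1)$-cliques preserves overwhelming probability.

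The main subtlety, rather than a genuine obstacle, is that at first glance the hypothesis $p_0 p_1 = \omega((\log n/n)^{1/d})$ looks too weak to drive Garland's method at the top level $k = d$, where the link calculation demands $p_0 p_1^d = \omega(\log n /n)$. The resolution is the elementary observation $p_1 \geq p_0 p_1$, available only because $p_0$ is a probability; this lets the lower bound on the \emph{product} $p_0 p_1$ absorb the extra factors of $p_1$ coming from conditioning on $(k-1)$-cliques. Beyond this calculation, the argument is standard bookkeeping using the ``overwhelming probability'' quantification built into Lemma~\ref{twoparametergeneral}, together with the fact that $H_0$, $H_1$, \dots, $H_{d-1}$ with $\Q$-coefficients all vanish once each level of the induction succeeds.
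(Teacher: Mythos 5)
Your proposal is correct and follows essentially the same route as the paper: apply the Ballmann--\'Swi\c{a}tkowski formulation of Garland's method degree by degree, with Lemma~\ref{twoparametergeneral} supplying the overwhelming-probability spectral gap for the link of each small face (which is again a two-parameter random graph) and a first-moment count handling purity, both fed by the key observation $p_1 \geq p_0 p_1$ so that $p_0 p_1^k \geq (p_0 p_1)^k = \omega(\log n/n)$ for all $k \leq d$. The only differences from the paper's write-up are cosmetic (index shift $k \mapsto k+1$, and you parametrize the link's vertex probability as $p_0 p_1^{k-1}$ rather than the paper's shorthand $(p_0 p_1)^k$, both of which lead to the same verification).
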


\begin{proof}
Fix $k \leq d - 1$. We apply Garland's method and Lemma \ref{twoparametergeneral} to show that $H_k(\overline{G}; \Q) = 0$ with overwhelming probability. If this holds for every $k \leq d - 1$ with overwhelming probability then it holds for \emph{all} such $k$ simultaneously with overwhelming probability. 

We must check the necessary spectral condition and that any face of dimension at most $d$ is contained in a face of dimension $d$ both hold with overwhelming probability. The pure-dimensionality condition follows from well-known results about random graphs. We give a proof here though for the sake of completeness. We show that for $G \sim G(n; p_0, p_1)$ as given, with overwhelming probability $G$ does not contain a maximal $\ell$-clique for $\ell$ smaller than $d + 1$. For a fixed set of $\ell$ vertices the probability that $G$ contains a maximal $\ell$ clique on those vertices is 
\[p_0^{\ell}p_1^{\binom{\ell}{2}}(1 - p_0p_1^k)^{n - \ell} \leq \exp(- p_0p_1^{\ell} n) \leq \exp(-\omega(\log n)). \]
By union bound of the polynomially many choices for a possible $\ell$-clique we have the pure-dimensionality condition. 

Now we consider the link of a $(k - 1)$-face in $\overline{G}$. We observe that this is a flag complex $\overline{H}$ for $H \sim G(n - k; (p_0p_1)^k, p_1)$. By Lemma \ref{twoparametergeneral}, it suffices to verify that \[p_0^k(p_1)^{k + 1} = \omega \left(\frac{\log n}{n} \right).\]
This follows by 
\[p_0^{k}(p_1)^{k + 1} \geq p_0^{k + 1}p_1^{k + 1} = \omega\left(\left( \frac{ \log n}{n} \right)^{(k + 1)/d} \right)\]
with $k + 1 \leq d.$
\end{proof}

For a graph $G$ the construction of $\widetilde Z(G) = \mdeljoin{\overline G}$ has that for any face $\sigma * \tau = (\sigma\times \{-1\}) \cup (\tau \times \{+1\})$, the link $\lk(\sigma * \tau)$ is the flag complex induced on the common neighbors in $\widetilde Z(G)$ of the vertices of $\sigma*\tau$, since $\widetilde Z(G)$ is a flag complex. We have already discussed the connection between common neighbors of sets of vertices in $\widetilde Z(G)$ for $G \sim G(n, p)$ and the random graph model introduced as $H(n, p_A, p_B, q)$. This auxiliary model will again be relevant here.

In order to prove Theorem \ref{RationalHomologyVanishingTheorem} using Garland's method alone we would have to show, for the vanishing homology statement about $H_{2d}$, that for every set of $2d$ vertices in $\widetilde Z$ without an antipodal pair, the common link is a good spectral expander. For a choice of $2d$ vertices with $k$ of them positive and $\ell$ of them negative, the common neighborhood in $\widetilde Z(n, n^{-\alpha})$ is distributed as 
\[H(n - 2d, n^{-k\alpha}(1 - n^{-\alpha})^{\ell}, n^{-\ell \alpha}(1 - n^{-\alpha})^k, n^{-\alpha}).\]
The structure of this graph can vary greatly depending on $k$ and $\ell$ even keeping the requirement that $k + \ell = 2d$. To discuss all these possible cases, we introduce the following notation:

\begin{definition}
For a graph $G$ and its separated deleted join $\widetilde Z = \widetilde Z(G)$ we denote by $\Pi_{k, \ell}(DZ)$, or just $\Pi_{k, \ell}$ if the complex is clear from context, the subcomplex of $\widetilde Z$ generated by all faces $\sigma*\tau$ where the cardinality of $\sigma$ is $k$ and the cardinality of $\tau$ is $\ell$.
\end{definition}

The $2d$th homology group of $\widetilde Z$ depends only on the $(2d + 1)$st skeleton of $\widetilde Z$, so it suffices to show that $H_{2d}(\bigcup_{k + \ell = 2d + 2} \Pi_{k, \ell}(\widetilde Z)) = 0$. We can similarly split up the $2d$th skeleton in order to study the $(2d - 1)$st homology group. The key to the argument will first be two reduce to studying only $\Pi_{d + 1, d + 1}$ (for $2d$ homology) and $\Pi_{d, d + 1} \cup \Pi_{d + 1, d}$ (for $(2d - 1)$-homology). This reduction is the following lemma.

\begin{lemma}\label{GluingStep}
For $d \geq 1$ if $\alpha < 1/d$ then with overwhelming probability every $2d$-cycle in $\widetilde Z$, for $\widetilde Z \sim \widetilde Z(2n, n^{-\alpha})$, is rationally homologous to a cycle in $\Pi_{d + 1, d + 1}$ and if $\alpha < 1/(d + 1)$ then with overwhelming probability every $(2d - 1)$-cycle is rationally homologous to a cycle in $\Pi_{d + 1, d} \cup \Pi_{d, d + 1}$. 
\end{lemma}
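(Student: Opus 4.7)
The plan is to show that the inclusion of the target subcomplex into $\widetilde Z$ induces a surjection on rational homology in the appropriate degree. This reduces to vanishing of the rational homology of the quotient chain complex, which I would establish via a spectral sequence argument powered by Corollary~\ref{TwoParameterRationalConnectivity}.

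For the $2d$-cycle case, let $\Pi = \{\sigma * \tau \in \widetilde Z : |\sigma|, |\tau| \leq d+1\}$, which visibly contains $\Pi_{d+1, d+1}$. A direct extension argument shows that $\Pi = \Pi_{d+1, d+1}$ with overwhelming probability: the expected number of vertices extending any such face to a $(d+1, d+1)$-face is $n^{\Omega(1)}$ in the regime $\alpha < 1/d$, and a union bound over the polynomially many faces finishes this step. Then from the short exact sequence $0 \to C_*(\Pi) \to C_*(\widetilde Z) \to Q_* \to 0$, surjectivity of $H_{2d}(\Pi; \Q) \to H_{2d}(\widetilde Z; \Q)$ reduces to $H_{2d}(Q_*; \Q) = 0$.

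A key dimensional observation is that any face of dimension at most $2d + 1$ has $|\sigma| + |\tau| \leq 2d + 2$, so it cannot have both $|\sigma| \geq d + 2$ and $|\tau| \geq d + 2$. Thus in this dimension range $Q_*$ splits as a direct sum $Q^\sigma_* \oplus Q^\tau_*$ (indexed by which of $|\sigma| \geq d + 2$ or $|\tau| \geq d + 2$ holds), compatibly with the boundary, and by the free involution $\iota$ on $\widetilde Z$ it suffices to show $H_{2d}(Q^\sigma_*; \Q) = 0$. This I would do via the spectral sequence filtered by $|\sigma|$, whose $E^1$ page is
\[
E^1_{p, q} = \bigoplus_{|\sigma| = p,\; p \geq d+2} H_q(L_\sigma; \Q),
\]
where $L_\sigma = \{\tau : \sigma * \tau \in \widetilde Z\}$ is a flag complex of a two-parameter random subgraph of $G$ on $\Theta(n)$ vertices. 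For bounded $p$, Corollary~\ref{TwoParameterRationalConnectivity} applies to every $L_\sigma$ with overwhelming probability (union-bounded over the polynomially many $\sigma$), giving $H_q(L_\sigma; \Q) = 0$ for $q \leq d - 1$. The dimensional constraints $p + q = 2d$ and $p \geq d + 2$ force $q \leq d - 2$, so every contributing $E^1$ entry vanishes and $H_{2d}(Q^\sigma_*; \Q) = 0$.

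The argument for $(2d-1)$-cycles is structurally identical, with target $\Pi_{d+1, d} \cup \Pi_{d, d+1}$. The only wrinkle is that the analogous quotient has an additional direct summand generated by $(d+1, d+1)$-chains, but those chains have trivial boundary in the quotient and contribute only to $H_{2d+1}$, not to $H_{2d-1}$. The stronger hypothesis $\alpha < 1/(d+1)$ gives the stronger vanishing $H_q(L_\sigma; \Q) = 0$ for $q \leq d$ from Corollary~\ref{TwoParameterRationalConnectivity}, which is precisely what the analogous spectral sequence requires in total degree $2d - 1$. The main subtlety I anticipate is the bookkeeping needed to ensure overwhelming (as opposed to merely high) probability holds uniformly across all the polynomially many links $L_\sigma$ entering the spectral sequence, and verifying the extension argument separately for each of the two symmetric halves $\Pi_{d+1, d}$ and $\Pi_{d, d+1}$.
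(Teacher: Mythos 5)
Your argument is essentially correct, though it proceeds by a route the paper does not take, and a couple of details deserve care. The paper's proof is a hands-on chain-level iteration: pick a face $\tau*\sigma$ of the cycle $C$ with $|\tau|$ maximal and $>d+1$, observe that $\Sigma_\tau=\{\sigma : \tau*\sigma\in C\}$ is a cycle of dimension $2d-|\tau|\le d-2$ in the flag complex of the two-parameter random graph obtained by deleting $\tau$ and its neighbors, fill it in using Corollary~\ref{TwoParameterRationalConnectivity}, and replace $\tau*\Sigma_\tau$ by $\partial\tau*\overline{\Sigma_\tau}$; iterate until the cycle lives in $\Pi_{d+1,d+1}$. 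Your argument packages this iteration into the long exact sequence of the pair $(\widetilde Z,\Pi)$ plus the spectral sequence of the filtration of the quotient by $|\sigma|$, and the identical input --- rational connectivity of the links $L_\sigma$ from Corollary~\ref{TwoParameterRationalConnectivity} --- kills the relevant $E^1$ entries. This is the spectral-sequence shadow of the paper's one-face-at-a-time reduction, so conceptually they are close; your version is more systematic but less explicit. Your version also inserts one additional step the paper avoids, namely the extension claim $\Pi=\Pi_{d+1,d+1}$, which the paper handles by invoking purity of the underlying flag complex only at the end.

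Two small points you should make explicit. First, in the associated graded of the filtration by $|\sigma|$, the $\tau=\emptyset$ entry is present, so the $E^1$ terms are \emph{reduced} homology groups $\tilde H_q(L_\sigma;\Q)$, and the range $p+q=2d$, $p\ge d+2$ makes $q$ as small as $-1$ (at $p=2d+1$) and as small as $0$ (at $p=2d$ for $d\ge 2$). These low-$q$ entries are not handled purely by ``$q\le d-2$'' unless you also note that $L_\sigma$ is nonempty and connected with overwhelming probability, which Corollary~\ref{TwoParameterRationalConnectivity} does give but should be invoked explicitly for $\tilde H_{-1}$ and $\tilde H_0$. Second, the splitting $Q_*\cong Q^\sigma_*\oplus Q^\tau_*$ and the compatibility of the filtration with the boundary only hold in the range of dimensions $\le 2d+1$; you should either restrict attention to those degrees (which is all that $H_{2d}$ sees, since the filtration is bounded by the maximum clique size of $G$) or take $Q^\sigma$ to be the subcomplex of $Q$ spanned by faces with $|\sigma|\ge d+2$ and $|\tau|\le d+1$. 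With these adjustments the proof goes through, and in fact your argument for the $(2d-1)$-case only needs $\alpha<1/d$, which is weaker than the $\alpha<1/(d+1)$ the lemma assumes.
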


\begin{proof}[Proof of Lemma \ref{GluingStep}]
Let $C$ be a $2d$-cycle in $\widetilde Z$ with $\tau*\sigma$ a face with $|\tau| > d + 1$ and so that there is no other face $\tau' * \sigma'$ of $C$ with $\tau \subseteq \tau'$ on the plus side of $\widetilde Z$. (If no such choice for $\tau*\sigma$ exists then try again with $\tau' * \sigma'$ until we get a $\tau$ that is maximal on the plus side.) For this choice of $\tau*\sigma$ let $\Sigma_{\tau} := \{\sigma \mid \tau*\sigma \in C\}$ on the minus side of $\widetilde Z$. We claim that $\Sigma_{\tau}$ forms a $(2d - |\tau|)$-cycle on the minus side of $\widetilde Z$. This can be seen from the maximality of $\tau$ giving a block structure to the boundary matrix $\partial_{2d}(\widetilde Z)$. By the maximality of $\tau$ the rows and columns of $\partial_{2d}(\widetilde Z)$ may be rearranged so that 
\[\partial_{2d}(\widetilde Z) = \begin{pmatrix} M_1 & 0 \\ M_3 & M_4 \end{pmatrix}\]
where the columns are indexed first by $2d$-faces with $\tau$ as their plus vertices and the rows indexed first by $(2d - 1)$-faces with $\tau$ as their plus vertices. By this block structure then the restriction of $C$ to $\tau*\Sigma_{\tau}$ is in the kernel of $M_1$. But also by the block structure $M_1$ is just the $(2d - |\tau|)$-boundary matrix of the flag complex of the underlying graph with the vertices of $\tau$ and the neighbors of the vertices of $\tau$ deleted. This is the flag complex of a two-parameter random graph $G(n - |\tau|, (1 - n^{-\alpha})^{|\tau|}, n^{-\alpha})$. As $\alpha < 1/d$, we have 
\[(1 - n^{-\alpha})^{|\tau|}n^{-\alpha} = \omega \left(\left(\frac{\log n}{n}\right)^{1/d} \right).\]
So with overwhelming probability the flag complex of the underlying graph with $\tau$ and the neighbors of $\tau$ deleted is rationally $(d - 1)$-connected by Corollary~\ref{TwoParameterRationalConnectivity}. Thus there is a rational $(2d - |\tau| + 1)$-chain $\overline{\Sigma_{\tau}}$ in $X$ so that $\partial(\overline{\Sigma_{\tau}}) = \Sigma_{\tau}$.

Now $\partial (\tau*\overline{\Sigma_{\tau}}) = \tau*\Sigma_{\tau} + \partial \tau * \overline{\Sigma_{\tau}}$. So $C$ is homologous to $C - \tau*\Sigma_{\tau} - \partial \tau * \overline{\Sigma_{\tau}}$ replacing $\tau*\Sigma_{\tau}$ in the support with $\partial \tau * \overline{\Sigma_{\tau}}$. Thus we have replaced faces with $\tau$ as their positive face with faces with facets of $\tau$ as their positive faces. So we can do this until we are left with faces $\tau * \sigma$ so that $|\tau| \leq d$ and $||\tau| - |\sigma|| \leq 1$. At that point we are left with $C$ belonging to $\Pi_{d + 1, d + 1}$ by the fact that the underlying flag complex has every face of dimension at most $d$ belonging to a face of dimension~$d$. (See the proof of Corollary \ref{TwoParameterRationalConnectivity} for the proof of this standard fact about maximal cliques in the Erd\H{o}s--R\'enyi random graph.) The argument for $(2d - 1)$-cycles works in the same way. 
\end{proof}

Now we apply Garland's method to show that $\Pi_{d + 1, d + 1}$ has no rational homology in degree $2d$ and that $\Pi_{d, d + 1} \cup \Pi_{d + 1, d}$ has no rational homology in degree $2d - 1$. For $\sigma*\tau$ of dimension $2d - 1$ in $\Pi_{d + 1, d + 1}$, we have two cases to consider, either $|\sigma| = |\tau| = d$ or (without loss of generality), $|\sigma| = d + 1$ and $|\tau| = d - 1$. In the first case the link within $\Pi_{d + 1, d + 1}$ is a particular random bipartite graph, and in the second case the link is an Erd\H{o}s--R\'enyi random graph. The second case is essentially handled by results of \cite{HKP}, but the first case requires establishing new bounds on the spectral gap for certain random bipartite graphs. Within the expander graph literature there are results on eigenvalues of random bipartite graphs. Such results in quite a general setting are found in work of Chung and Radcliffe \cite{ChungRadcliffe}. However, nothing that we found had the \emph{with overwhelming probability} type of estimates that we needed. The way that we go about establishing the spectral expansion property that we need with the right concentration of measure inequality uses the following result of Bilu and Linial \cite{BiluLinial}. 

\begin{theorem}[Lemma 3.3 of \cite{BiluLinial}]
Let $A$ be an $n \times n$ symmetric matrix so that the $\ell_1$-norm of each row in $A$ is at most $K$, and all diagonal entries of $A$ are, in absolute value, $O(\epsilon(\log(K/\epsilon) + 1))$. Assume that for any two vectors $u, v \in \{0, 1\}^n$ with $\supp(u) \cap \supp(v) = \emptyset:$
\[\frac{|u^tAv|}{||u||||v||} \leq \epsilon.\]
Then the spectral radius of $A$ is 
$O(\epsilon( \log(K/\epsilon) + 1)).$
\end{theorem}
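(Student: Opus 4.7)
Since $A$ is symmetric, its spectral radius equals $\max_{\|x\|_2 = 1} |x^T A x|$; fix a unit vector $x$ attaining this maximum. The plan is to upper bound $|x^T A x|$ by dyadically decomposing $x$ into $\{0,1\}$-indicator vectors so that the discrepancy hypothesis can be applied pair by pair.

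First I would write $x = x^+ - x^-$ with $x^\pm \ge 0$ supported on disjoint index sets, so that $x^T A x$ splits into four bilinear forms in $x^+$ and $x^-$. For each of $x^+$ and $x^-$ I would partition its support into dyadic level sets $B_j = \{i : 2^{-j-1} < |x_i| \le 2^{-j}\}$. The $\ell_1$ row bound implies $\|Ax\|_\infty \le K \|x\|_\infty \le K$, which together with $\|x\|_2 = 1$ lets one discard coordinates of magnitude below roughly $\epsilon/K$ at a cost of only $O(\epsilon)$ in the quadratic form. Hence only $O(\log(K/\epsilon))$ dyadic levels per sign class are relevant.

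Next I would expand each bilinear form as approximately $\sum_{j,k} 2^{-j-k}\, \mathbf{1}_{B_j}^T A\, \mathbf{1}_{B_k}$, absorbing the dyadic rounding error into a constant factor. Whenever $B_j$ and $B_k$ are disjoint --- which happens for all pairs drawn from opposite sign classes and for all distinct level pairs within the same sign class --- the hypothesis gives $|\mathbf{1}_{B_j}^T A\, \mathbf{1}_{B_k}| \le \epsilon \sqrt{|B_j|\,|B_k|}$. Applying Cauchy--Schwarz and using $\sum_j 2^{-2j}|B_j| = O(\|x\|_2^2) = O(1)$, the total contribution of these disjoint-support cross-terms sums to $O(\epsilon \log(K/\epsilon))$, the logarithmic factor arising from the number of relevant dyadic levels.

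The main obstacle is the same-bucket terms $\mathbf{1}_{B_j}^T A\, \mathbf{1}_{B_j}$, for which the discrepancy hypothesis does not apply directly since $B_j$ does not have support disjoint from itself. For these I would use a random bipartition $B_j = B_j' \sqcup B_j''$ placing each index independently on either side: in expectation each off-diagonal entry contributes a factor of $1/4$, so some deterministic split realises $|\sum_{i \ne i' \in B_j} A_{ii'}| = O(\epsilon\,|B_j|)$ by the hypothesis. The only remaining piece is the diagonal contribution $\sum_i A_{ii} x_i^2$, which is controlled by the given bound $|A_{ii}| = O(\epsilon(\log(K/\epsilon)+1))$. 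Summing these contributions yields the stated bound on the spectral radius; the calibration of the diagonal hypothesis is precisely what is needed to absorb the one term that cannot be reduced to a disjoint-support bilinear form.
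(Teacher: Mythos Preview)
The paper does not supply its own proof of this statement: it is quoted verbatim as Lemma~3.3 of Bilu--Linial and then used as a black box inside the proof of Theorem~\ref{bipartiteexpansion}. So there is no in-paper argument to compare your proposal against.

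That said, your sketch is essentially the original Bilu--Linial argument: dyadic level-set decomposition of a unit eigenvector, truncation of small coordinates using the $\ell_1$ row bound to restrict to $O(\log(K/\epsilon))$ levels, application of the discrepancy hypothesis to all disjoint-support pairs of buckets, and handling of the same-bucket quadratic terms by a random bipartition plus the diagonal assumption. One small imprecision: in the same-bucket step you write that ``some deterministic split realises'' the bound, but in fact the hypothesis already bounds $\mathbf{1}_{B_j'}^T A\,\mathbf{1}_{B_j''}$ for \emph{every} split, so one simply observes that the expectation over a uniformly random split equals $\tfrac14\sum_{i\ne i'}A_{ii'}$ and hence $|\sum_{i\ne i'\in B_j}A_{ii'}| \le 4\epsilon|B_j|$. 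With that correction the outline is sound.
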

This theorem will also allow us to establish results about the spectral expansion for certain random bipartite graphs from the vertex expansion. The main component to make this precise is the following theorem. As a reminder, the normalized Laplacian of a graph has an eigenvalue of 2 for exactly each bipartite component, so when considering bipartite connected graphs there are two trivial eigenvalues.
\begin{theorem}\label{bipartiteexpansion}
If $G$ is a connected bipartite graph with bipartition $X$, $Y$ with $\overline{d}$ as the average degree of vertices in $X$ and $\overline{c}$ as the average degree of vertices in $Y$. And for any $U \subseteq X$, $V \subseteq Y$, 
\[\left|e(U, V) - \frac{|U||V| \overline{c}}{|X|}\right| \leq \epsilon \sqrt{|U||V|} \]
Then there is some constant $C$ so that the spectral gap of the normalized Laplacian of $G$ is at least
\[1 - \frac{C\epsilon \log((\Delta(G) + \overline{d} + \overline{c})/\epsilon) + C}{\delta(G)} - \frac{2\sigma_X\sigma_Y}{\delta(X)\delta(Y)},\]
where $\sigma_X$ and $\sigma_Y$ are the standard deviations of the degree sequence of $X$ and $Y$ respectively, and $\Delta$, $\delta$ denote the maximum and minimum vertex degrees. 
\end{theorem}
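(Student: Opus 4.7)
The plan is to compare the normalized adjacency matrix $M := D^{-1/2}AD^{-1/2}$ to a rank-$2$ background matrix via Bilu--Linial's discrepancy-to-spectrum lemma. Since the spectral gap of $\mathcal{L} = I - M$ equals $1 - \mu_2(M)$, where $\mu_2(M)$ is the second-largest eigenvalue of $M$, and since for a connected bipartite graph the eigenvectors of $M$ at $\pm 1$ span the subspace $\mathrm{span}(D^{1/2}\mathbf{1}_X, D^{1/2}\mathbf{1}_Y)$, Courant--Fischer gives
\[\mu_2(M) \leq \max_{\substack{\|x\| = 1\\ x \perp D^{1/2}\mathbf{1}_X,\, D^{1/2}\mathbf{1}_Y}} x^T M x,\]
so it suffices to bound $x^T M x$ uniformly over such $x$.

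Next, I introduce the intermediate matrix $\overline{A}$ with $\overline{A}_{xy} = \overline{c}/|X|$ for $(x,y) \in X \times Y$ and zero within each side, and set $M_0 := D^{-1/2}\overline{A}D^{-1/2}$. The hypothesis is precisely a Bilu--Linial-style discrepancy bound on $A - \overline{A}$: for $0$--$1$ vectors $u, v$ with disjoint support, decomposing into their $X$- and $Y$-parts writes $u^T(A - \overline{A})v$ as a sum of two of the given discrepancies, and the inequality $\sqrt{ab} + \sqrt{cd} \leq \sqrt{(a+c)(b+d)}$ then collapses this to $|u^T(A - \overline{A})v| \leq \epsilon\|u\|\|v\|$. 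The row $\ell_1$-norm of $A - \overline{A}$ is at most $\Delta(G) + \overline{d} + \overline{c}$ and its diagonal vanishes, so Bilu--Linial yields $\|A - \overline{A}\|_{\mathrm{op}} = O(\epsilon\log((\Delta + \overline{d} + \overline{c})/\epsilon) + 1)$. Conjugating by $D^{-1/2}$ and using $\|D^{-1}\|_{\mathrm{op}} = 1/\delta(G)$ produces the first summand $(C\epsilon\log((\Delta + \overline{d} + \overline{c})/\epsilon) + C)/\delta(G)$ of the claimed bound.

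Then I need to control $x^T M_0 x$ on the orthogonal complement of $\mathrm{span}(D^{1/2}\mathbf{1}_X, D^{1/2}\mathbf{1}_Y)$. A direct calculation gives $M_0 = \frac{\overline{c}}{|X|}(uv^T + vu^T)$ with $u := D^{-1/2}\mathbf{1}_X$ and $v := D^{-1/2}\mathbf{1}_Y$, whence $x^T M_0 x = \frac{2\overline{c}}{|X|}(x^T u)(x^T v)$. Orthogonality of $x$ to $D^{1/2}\mathbf{1}_X$ lets me replace $u$ by $u - \pi u$ where $\pi$ is the projection onto $D^{1/2}\mathbf{1}_X$; the residual $u - \pi u$ has $X$-coordinates $(\overline{d} - d_x)/(\overline{d}\sqrt{d_x})$, of squared norm at most $|X|\sigma_X^2/(\overline{d}^2 \delta(X))$. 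Cauchy--Schwarz then yields $|x^T u| \leq \|x\|\sigma_X\sqrt{|X|}/(\overline{d}\sqrt{\delta(X)})$, and symmetrically $|x^T v| \leq \|x\|\sigma_Y\sqrt{|Y|}/(\overline{c}\sqrt{\delta(Y)})$. Multiplying these estimates, using the identity $|X|\overline{d} = |Y|\overline{c}$ and the bounds $\overline{d} \geq \delta(X)$ and $\overline{c} \geq \delta(Y)$, collapses the product to
\[|x^T M_0 x| \leq \frac{2\sigma_X\sigma_Y}{\delta(X)\delta(Y)}.\]

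Putting the two stages together, $\mu_2(M) \leq \|M - M_0\|_{\mathrm{op}} + \sup x^T M_0 x$ (supremum over admissible $x$) is bounded by the sum of the two estimates, which gives the claimed lower bound on $\lambda_1(\mathcal{L}) = 1 - \mu_2(M)$. The main obstacle is the bookkeeping in the third stage: the Cauchy--Schwarz factors naturally produce $\sqrt{\overline{c}/\overline{d}}$ and $\sqrt{\overline{d}/\overline{c}}$ that must cancel against $|X|\overline{d} = |Y|\overline{c}$, and the remaining $\sqrt{\overline{d}\overline{c}}$ in the denominator must then be further weakened to $\sqrt{\delta(X)\delta(Y)}$ via the average-versus-minimum inequality to land on the clean form $2\sigma_X\sigma_Y/(\delta(X)\delta(Y))$ stated in the theorem.
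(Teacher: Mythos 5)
Your proposal is correct and takes essentially the same route as the paper: both decompose $A$ into $\overline{A}$ plus a discrepancy part, apply the Bilu--Linial lemma to $A-\overline{A}$, and control the $\overline{A}$ term using orthogonality of test vectors to $D^{1/2}\mathbf{1}_X$ and $D^{1/2}\mathbf{1}_Y$ together with the degree-sequence standard deviations. Your presentation of the $\overline{A}$ step via the explicit rank-$2$ form $M_0=\frac{\overline{c}}{|X|}(uv^T+vu^T)$ and your $\sqrt{ab}+\sqrt{cd}\leq\sqrt{(a+c)(b+d)}$ step (giving $\epsilon$ rather than the paper's $2\epsilon$ in the discrepancy check) are minor cosmetic improvements over the paper's bookkeeping, but the argument is the same.
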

\begin{proof}
Let $A$ be the adjacency matrix of $G$ and $D$ be the degree matrix of $G$. We show that all but two eigenvalues of $D^{-1/2}AD^{-1/2}$ are small. As $G$ is a connected bipartite graph, $D^{-1/2}AD^{-1/2} = I - \mathcal{L}$ has the two trivial eigenvalues $1$ and $-1$. The eigenvector of $1$ is $D^{1/2}\textbf{1}$ where $\textbf{1}$ denotes the all 1's vector. The eigenvector of $-1$ is $D^{1/2} \hat{\textbf{1}}$ where $\hat{\textbf{1}}(v) = 1$ for $v \in X$ and $\hat{\textbf{1}}(v) = -1$ for $v \in Y$. Since $D^{-1/2}AD^{-1/2}$ is a symmetric matrix any nontrivial eigenvector $g$ of $D^{-1/2}AD^{-1/2}$ satisfies
\[\langle g, D^{1/2}\textbf{1}_X \rangle = \langle g, D^{1/2}\textbf{1}_Y \rangle = 0,\]
where $\textbf{1}_X$ and $\textbf{1}_Y$ are the indicator vectors for $X$ and $Y$. So we suppose $\langle g, D^{1/2}\textbf{1}_X \rangle = \langle g, D^{1/2}\textbf{1}_Y \rangle = 0$ and bound
\[\frac{|\langle AD^{-1/2} g, D^{-1/2}g \rangle|}{||g||^2} .\]
Let $\overline{A}$ be the \emph{expectation matrix} for $G$ (in using this we take inspiration from \cite{ChungRadcliffe}). That is for $x \in X$ and $y \in Y$, $\overline{A}_{xy} = \overline{A}_{yx} = \frac{\overline{c}}{|X|} = \frac{\overline{d}}{|Y|}$ and all other entries of $\overline{A}$ are zero. By the triangle inequality 
\[\frac{|\langle AD^{-1/2} g, D^{-1/2}g \rangle|}{||g||^2} \leq \frac{|\langle (A - \overline{A})D^{-1/2} g, D^{-1/2}g \rangle|}{||g||^2} + \frac{|\langle \overline{A}D^{-1/2} g, D^{-1/2}g \rangle|}{||g||^2}.\]

For the first summand we use Lemma 3.3 of \cite{BiluLinial}. The matrix $A - \overline{A}$ is symmetric, has that the $\ell_1$-norm of each column is at most $\Delta(G) + \overline{d} + \overline{c}$, and has all diagonal entries equal to zero. Now for any indicator vectors $u$ and $v$ for disjoint sets $U$ and $V$, 
\begin{eqnarray*}
\frac{|u^t(A - \overline{A})v|}{||u|| ||v||} &=& \frac{|e(U, V) - (|V \cap X||U \cap Y| + |V \cap Y||U \cap X|)(\overline{c}/|X|)|}{\sqrt{|U||V|}} \\
&=& \frac{|e(U \cap X, V \cap Y) + e(U \cap Y, V \cap X) - (|V \cap X||U \cap Y| + |V \cap Y||U \cap X|)(\overline{c}/|X|)|}{\sqrt{|U||V|}} \\ 
&\leq& \frac{|e(U \cap X, V \cap Y) - |U \cap X||V \cap Y|(\overline{c}/|X|)|}{\sqrt{|U \cap X||V \cap Y|}}\frac{\sqrt{|U \cap X||V \cap Y|}}{\sqrt{|U||V|}} \\
&& + \frac{|e(U \cap Y, V \cap X) - |U \cap Y||V \cap X| (\overline{c}/|X|)|}{\sqrt{|U \cap Y||V \cap X|}} \frac{\sqrt{|U \cap Y||V \cap X|}}{\sqrt{|U||V|}} \\
&\leq& \epsilon \frac{\sqrt{|U \cap X||V \cap Y|}}{\sqrt{|U||V|}} + \epsilon  \frac{\sqrt{|U \cap Y||V \cap X|}}{\sqrt{|U||V|}} \\
&\leq& 2\epsilon.
\end{eqnarray*}
Thus by Lemma 3.3 of \cite{BiluLinial} the spectral radius of $A - \overline{A}$ is at most $O(2\epsilon \log ((\Delta(G) + \overline{d} + \overline{c})/\epsilon) + 1)$. Therefore
\begin{eqnarray*}
\frac{|\langle (A - \overline{A})D^{-1/2} g, D^{-1/2} g \rangle|}{||g||^2} &\leq& \frac{|\langle (A - \overline{A})D^{-1/2} g, D^{-1/2} g \rangle|}{\delta(G)||D^{-1/2}g||^2} \\
&\leq& O\left(\frac{1}{\delta(G)}(2\epsilon \log ((\Delta(G) + \overline{d} + \overline{c})/\epsilon) + 1)\right).
\end{eqnarray*}
Next we turn our attention to bounding 
\[\frac{|\langle \overline{A}D^{-1/2}g , D^{-1/2} g \rangle|}{||g||^2}.\]
We note that by construction $\overline{A}$ is a rank 2 matrix, and thus has $n - 2$ linearly independent eigenvectors for the eigenvalue zero. Moreover it is easy to check that the two nontrivial eigenvectors of $\overline{A}$ are $f: [n] \rightarrow \R$ so that $f(v) = \sqrt{|Y|}$ if $v \in X$ and $f(v) = \sqrt{|X|}$ if $v \in Y$, for the eigenvalue $\sqrt{\overline{c} \overline{d}}$ and $\hat{f}$ defined by $\hat{f}(v) = f(v)$ for $v \in X$ and $\hat{f}(v) = -f(v)$ for $v \in Y$. Thus any vector orthogonal to both $\textbf{1}_X$ and $\textbf{1}_Y$ is in the kernel of $\overline{A}$. Thus we may write 
$$D^{-1/2} g = a \textbf{1}_X + b \textbf{1}_Y + \textbf{k}$$ 
where $\textbf{k}$ is a vector in the kernel of $\overline{A}$ orthogonal to both $\textbf{1}_X$ and $\textbf{1}_Y$.
Thus 
\begin{eqnarray*}
|\langle \overline{A}D^{-1/2} g, D^{-1/2}g \rangle| &=& |\langle a \overline{A} \textbf{1}_X + b \overline{A} \textbf{1}_Y, a\textbf{1}_X + b \textbf{1}_Y, + \textbf{k} \rangle|  \\
&=& |\langle a \overline{c} \textbf{1}_Y + b \overline{d} \textbf{1}_X, a \textbf{1}_X + b \textbf{1}_Y + \textbf{k} \rangle |\\
&=& |ab\overline{c}|Y| + ab \overline{d}|X|| = 2|ab||E(G)|  
\end{eqnarray*}
Here we have for $a$ and $b$: 
\[a = \langle g, D^{-1/2} \textbf{1}_X \rangle/|X|\]
and
\[b = \langle g, D^{-1/2} \textbf{1}_Y \rangle/|Y|. \]
So,
\begin{eqnarray*}
|a| &=& \frac{|\sum_{v \in X} g(v) (-\overline{d}/\sqrt{\deg(v)})|}{\overline{d}|X|} \\
&=& \frac{|\sum_{v \in X} g(v) ((-\deg(v) - \overline{d} + \deg(v))/\sqrt{\deg(v)}) |}{\overline{d}|X|} \\
&=& \frac{|-\sum_{v \in X} g(v)\sqrt{\deg(v)} + \sum_{v \in X} g(v) (\deg(v) - \overline{d})/\sqrt{\deg(v)}|}{\overline{d}|X|}.
\end{eqnarray*}
Recall that $\langle g, D^{1/2} \textbf{1}_X \rangle = 0$, and so we obtain
\begin{eqnarray*}
|a| &\leq& \frac{||g|| \sqrt{\sum_{v \in X} (\deg(v) - \overline{d})^2}}{\overline{d}|X| \sqrt{\delta(G)}}\\
&=& \frac{||g|| \sqrt{|X|}\sigma_X}{\overline{d}|X|\sqrt{\delta(X)}}.
\end{eqnarray*}
Similarly,
\[|b| \leq \frac{||g|| \sigma_Y}{\overline{c} \sqrt{|Y|} \sqrt{\delta(Y)}}.\]
So putting everything together,
\begin{eqnarray*}
\frac{|\langle\overline{A}D^{-1/2}g, D^{-1/2}g \rangle|}{||g||^2} &\leq& 2 \frac{\sigma_X \sigma_Y}{\overline{d}\overline{c} \sqrt{|X||Y|} \sqrt{\delta(X)\delta(Y)}} E(G) \\
&=& \frac{|X| \overline{d} + |Y| \overline{c}}{\overline{d}\overline{c} \sqrt{|X||Y|}} \frac{\sigma_X\sigma_Y}{\sqrt{\delta(X)\delta(Y)}} \\
&=& \frac{1}{\sqrt{dc}}\left(\left(\frac{\overline{d}|X|}{\overline{c}|Y|}\right)^{1/2} + \left(\frac{\overline{c}|Y|}{\overline{d}|X|}\right)^{1/2}\right) \frac{\sigma_X\sigma_Y}{\sqrt{\delta(X)\delta(Y)}} \\
&\leq& \frac{2\sigma_X\sigma_Y}{\delta(X)\delta(Y)}.
\end{eqnarray*}
\end{proof}
Turning our attention back to the overall strategy we observe that within $\Pi_{d + 1, d + 1}(\widetilde Z)$ $\lk(\tau*\sigma)$ with $|\tau| = |\sigma| = d$ and $\widetilde Z \sim \widetilde Z(2n, p)$ is the auxiliary random graph $$H \sim H(n - 2d, n^{-d \alpha}(1 - n^{-\alpha})^d, n^{-d \alpha}(1 - n^{-\alpha})^d, 0, 0, 1 - n^{-\alpha}).$$ We apply Theorem \ref{bipartiteexpansion} to a random graph in this model.

\begin{lemma}\label{SymmetricCase}
For $d \geq 1$ and $\alpha < 1/d$ and $\delta > 0$ with overwhelming probability $$H \sim H(n - 2d, n^{-d \alpha}(1 - n^{-\alpha})^d, n^{-d \alpha}(1 - n^{-\alpha})^d, 0, 0, 1 - n^{-\alpha})$$ has spectral gap of the normalized Laplacian larger than $1 - \delta$. 
\end{lemma}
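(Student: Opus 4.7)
The plan is to verify the hypotheses of Theorem~\ref{bipartiteexpansion} for the random bipartite graph $H$ with overwhelming probability. Write $N := n^{1-d\alpha}$ and $q := 1 - n^{-\alpha}$; since $\alpha < 1/d$, $N$ grows polynomially in $n$ while $q \to 1$. First, Chernoff bounds on the binomial random variables giving $|A|$ and $|B|$ show that $|A|, |B| = (1 \pm o(1))N$ with overwhelming probability. Conditioning on this, each vertex on a given side has degree distributed as $\operatorname{Bin}((1 \pm o(1))N, q)$, independently across that side, so a further Chernoff bound together with a union bound over the $O(N)$ vertices gives
$$\delta(H),\ \Delta(H),\ \overline{d},\ \overline{c} = (1 \pm o(1))\, qN$$
with overwhelming probability; and a direct Bernstein bound on $\sum_{v \in A}(\deg v - \overline d)^2$ yields $\sigma_X, \sigma_Y = O(\sqrt{q(1-q)N})$. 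Hence the second summand in Theorem~\ref{bipartiteexpansion} is
$$\frac{2\sigma_X \sigma_Y}{\delta(X)\delta(Y)} = O\!\left(\frac{1-q}{qN}\right) = O\!\left(n^{(d-1)\alpha - 1}\right) = o(1).$$

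The main step is verifying the discrepancy condition $|e(U,V) - q|U||V|| \le \epsilon \sqrt{|U||V|}$ simultaneously over all $U \subseteq A$, $V \subseteq B$ with an $\epsilon$ that is small compared to $\delta(H)$. For fixed $U, V$, the edge count $e(U,V)$ is a sum of $|U||V|$ independent $\operatorname{Ber}(q)$ variables, so Bernstein's inequality gives exponential tail bounds; taking deviations of size $t = C\sqrt{q(1-q)|U||V| \cdot N \log n}$ with $C$ large makes each tail at most $\exp(-\omega(N \log n))$. Since there are at most $4^N$ pairs $(U, V)$, a union bound yields the discrepancy bound simultaneously with $\epsilon = C\sqrt{q(1-q) N \log n}$ with overwhelming probability. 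Combined with $\delta(H) \sim qN$ and $\log((\Delta + \overline d + \overline c)/\epsilon) = O(\log n)$, the first summand in the bound of Theorem~\ref{bipartiteexpansion} becomes
$$\frac{C\epsilon \log((\Delta + \overline d + \overline c)/\epsilon) + C}{\delta(H)} = O\!\left(\sqrt{\frac{(1-q)\log n}{qN}} \cdot \log n\right) = o(1),$$
using $\alpha < 1/d$ so that $(1-q)/N = n^{(d-1)\alpha - 1}$ is polynomially small in $n$.

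Plugging these estimates into Theorem~\ref{bipartiteexpansion} gives spectral gap of the normalized Laplacian of $H$ at least $1 - o(1)$, which exceeds $1 - \delta$ for every fixed $\delta > 0$ once $n$ is sufficiently large. The main technical obstacle is the uniform-in-$(U,V)$ discrepancy step, where exponentially many subsets must be controlled simultaneously via Bernstein concentration; this succeeds precisely because $N$ is polynomial in $n$, so the extra $\sqrt{\log n}$ factor absorbed into $\epsilon$ in order to pay the union bound is still dwarfed by $\delta(H) \sim qN$.
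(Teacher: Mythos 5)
Your overall strategy — verify the hypotheses of Theorem~\ref{bipartiteexpansion} via Chernoff/Bernstein and a union bound over $4^{\Theta(N)}$ subset pairs — is the same one the paper takes, and your treatment of $|A|,|B|$, the degree estimates, and the $\sigma_X\sigma_Y$ term is fine. However, the discrepancy estimate is wrong: the choice $\epsilon = C\sqrt{q(1-q)\,N\log n}$ cannot be achieved with overwhelming probability when $1/(d+1) < \alpha < 1/d$. The issue is the regime of small $|U||V|$. Bernstein's inequality for $e(U,V)\sim\mathrm{Bin}(m,q)$ has exponent $t^2/\bigl(2q(1-q)m + 2qt/3\bigr)$, and once $t \gtrsim (1-q)m$ (which happens for all $m \lesssim N\log n/(1-q)$, i.e.\ a wide range when $q\to 1$), the exponent degrades to $\Theta(t) = \Theta\bigl(\sqrt{(1-q)mN\log n}\bigr)$, which for $m$ small is $o(N)$ and cannot pay the $4^{\Theta(N)}$ union bound. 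Concretely, for $|U|=|V|=1$ and $\alpha > 1/(d+1)$ we have $\epsilon = C\sqrt{q(1-q)N\log n} = o(1)$, so the claimed bound $|e(U,V)-q| \le \epsilon$ requires $e(U,V)=1$; but $\Pr(e(U,V)=0) = 1-q = n^{-\alpha}$, a polynomially small but not super-polynomially small failure probability. The same obstruction persists up to $|U||V| = \Theta(N\log n)$: the non-edge count $\mathrm{Bin}(m,1-q)$ has a heavy (Poisson-type) upper tail relative to the Gaussian scale $\sqrt{q(1-q)m}$ you implicitly rely on.

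The paper sidesteps this by taking the much larger $\epsilon \approx \sqrt{C|A|q} = \Theta(\sqrt{N})$, dropping the $\sqrt{1-q}$ factor entirely. With this $\epsilon$ the individual failure probability is $\exp(-\Theta(C|A|))$, which survives the union bound by choosing $C$ large (no $\log n$ factor needed), and the problematic regime where the deviation exceeds the mean is handled deterministically by noting $e(U,V)\le |U||V|$ and $q > 1/2$. Since you've already verified $\delta(H) \sim qN$, this $\epsilon = \Theta(\sqrt{N})$ still gives a first summand of order $\log N/\sqrt{N} = o(1)$, so the conclusion survives. In short: the structure of your argument is correct, but the $\sqrt{1-q}$ savings in $\epsilon$ is illusory, and you must settle for the weaker discrepancy bound without it.
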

\begin{proof}
The randomness of the size of $A$ and $B$ can be effectively ignored because each of $A$ and $B$ are binomially distributed in size with expectation $\Theta(n^{1 - d \alpha})$ which goes to infinity for $\alpha < 1/d$, so by large deviation estimates of a binomial random variable we may assume that $1.01 n^{1 - d \alpha} \geq |A|, |B| \geq 0.99 n^{1 - d \alpha}$. Moreover for convenience of the rest of the argument we assume by symmetry that $|A| \geq |B|$. Conditioned on the high probability event that $|A|$ and $|B|$ are both bounded as above we can just consider $A$ and $B$ to be set and we examine the random bipartite graph on $(A, B)$ with each crossing edge included independently with probability $1 - n^{-\alpha}$.

We apply Theorem \ref{bipartiteexpansion} to this random graph. Let $U \subseteq A$ and $V \subseteq B$ be fixed and we bound the probability that
\[\left| e(U, V) - \frac{|U||V|\overline{c}}{|A|} \right| \leq \sqrt{C|A|p|U||V|}\]
for $C$ a fixed constant and $\overline{c} = \frac{1}{|B|}\sum_{b \in B} \deg(b)$.

By triangle inequality,
\begin{eqnarray*}
\left|e(U, V) - \frac{|U||V| \sum_{b \in B} \deg(v)}{|A||B|}\right| &\leq& |e(U, V) - p|U||V|| + \left|p|U||V| - \frac{|U||V| \sum_{b \in B} \deg(v)}{|A||B|}\right| \\
&=& |e(U, V) - p|U||V|| + \frac{|U||V|}{|A||B|} \left||A||B|p - \sum_{b \in B} \deg(b) \right|.
\end{eqnarray*}

Now $e(U, V)$ is distributed as a binomial with $|U||V|$ trials and success probability $p$, so the probability that
\[|e(U, V) - p|U||V|| \geq \sqrt{C |A| p |U||V|}/2\]
is at most $\exp(-C_1 |A|)$ for some constant $C_1$ that grows larger with $C$. 

Indeed by Chernoff bound (c.f. Theorem 23.6 of \cite{FriezeRandomGraphs}), 
\[\Pr(e(U, V) \leq p|U||V| - \sqrt{C |A| p|U||V|}/2) \leq \exp\left(-\frac{3 C|A|p|U||V|}{16 p|U||V|}\right) = \exp(-3C|A|/16)\]
and for 
\[\sqrt{Cp|A||U||V|} \leq 3p|U||V|\] we have the upper tail
\[\Pr(e(U, V) \geq p|U||V| + \sqrt{Cp|A||U||V|}) \leq \exp \left(-\frac{C|A||U||V| p}{4(p|U||V| + p|U||V|/2)}\right) \leq \exp(-C|A|/5).\]

But if $\sqrt{Cp|A||U||V|} \geq 2p|U||V|$ then
\[\Pr(e(U, V) \geq p|U||V| + \sqrt{Cp|A||U||V|})/2 \leq \Pr(e(U, V) \geq (p + p) |U||V|).\]
And as $p$ is tending to 1, for $p$ larger than 1/2 this probability is 0 since $e(U, V)$ is always at most~$|U||V|$.

Setting $C$ large enough $C_1$ becomes large enough that we can take a union bound over all $2^{|A| + |B|}$ choices of $|U|$ and $|V|$ to have that the above inequality holds for all $U$ and $V$.

For the second summand we have that 

\[\left||A||B|p - \sum_{b \in B} \deg(b) \right| = \left| e(A, B) - p|A||B| \right|\]
which is at most $\sqrt{C|A|^2p|B|}/2$ by what we showed in the first part in the special case that $A = U$ and $B = V$. Thus
\[\frac{|U||V|}{|A||B|} \left||A||B|p - \sum_{b \in B} \deg(b) \right| \leq \frac{\sqrt{C|A|p|U||V|}}{2}\]
as $\frac{|U||V|}{|A||B|} \leq \frac{\sqrt{|U||V|}}{\sqrt{|A||B|}}$.

Thus with overwhelming probability our random graph satisfies the vertex expansion condition in the assumptions of Theorem \ref{bipartiteexpansion} with $\epsilon = \sqrt{C |A| p}$ for $C$ a large constant. Trivially the maximum degree of our graph is at most $|A|$ and the minimum degree is at least $0.99|B|$ with overwhelming probability for $p = 1 - n^{-\alpha}$ sufficiently close to 1. Thus with overwhelming probability the spectral gap of our random bipartite graph is at least

\[1 - \frac{\sqrt{C |A|} \log(2(2|A| + |B|)/\sqrt{C|A|})}{0.99|B|} - \frac{2\sigma_A\sigma_B}{0.99|A|B|}\]
as $|A| \approx |B| = \Theta(n^{1 - d\alpha})$ we have that the middle term goes to zero with $n$. For the term involving the standard deviations of the degree sequence, it is intuitively clear that it out to go to zero by the fact that $\sigma_A^2$ has expectation $|A|p(1 - p)$ with $(1 - p) \rightarrow 0$. Thus by Markov's inequality, with \emph{high} probability this last term is at most $\delta$ for any fixed constant $\delta$. However we have to prove that it is small with overwhelming probability, which takes a small amount of work. We fix $\delta > 0$ and bound the probability that 
\[\frac{\sigma_A}{|B|} \leq \delta.\]

We have for $n$ large enough
\begin{eqnarray*}
\sigma_A &=& \sqrt{\frac{1}{|A| - 1}\sum_{a \in A}(\deg(a) - \overline{d})^2} \\
&\leq& 1.01 \frac{1}{\sqrt{|A|}} \sqrt{\sum_{a \in A} (\deg(a) - \overline{d})^2} \\
&\leq& \frac{1.01}{\sqrt{|A|}} \left( \sqrt{\sum_{a \in A} (\deg(a) - p|B|)^2} + \sqrt{|A|(p|B| - \overline{d})^2}  \right).\\
\end{eqnarray*}

For $\delta > 0$ given we have that the probability that 
\[\frac{\sum_{a \in A} (\deg(a) - p|B|)^2}{|A|} \geq \delta^2 |B|^2\]
can be bounded by observing that $\deg(a)$ is binomially distributed with $|B|$ trials and success probability $p$. As the degree of each vertex in $A$ is independent and binomially distributed by Chernoff bound for each $a \in A$, 
\[\Pr(|\deg(a) - p|B|| \geq \delta |B|) \leq 2 \exp(-\frac{\delta^2|B|}{3p}).\] Thus with probability at least
\[1 - |A|2 \exp(-\frac{\delta^2|B|}{3p}),\]
\[\frac{\sum_{a \in A} (\deg(a) - p|B|)^2}{|A|} \leq \delta^2 |B|^2.\]

In our case with overwhelming probability $|A| = \Theta(n^{1 - d\alpha})$ and $|B| = \Theta(n^{1 - d\alpha})$, so the above inequality holds with overwhelming probability.

For the second term, $\sqrt{|A|(p|B| - \overline{d})^2}$ we observe that
\[(p|A||B| - |A|\overline{d})^2 = (p|A||B| - e(A, B))^2.\]
By what we previously showed this is at most $\delta|A|^2 p |B|/4$ with overwhelming probability by a routine application of Chernoff bound, basically by the same argument that we used earlier to bound $|e(U, V) - p|U||V||$. Therefore with overwhelming probability
\[\sigma_A \leq \delta|B| + \frac{\delta \sqrt{|B|}}{2}.\]
Thus for $n$ sufficient large, $\sigma_A/|B|$ can be made smaller than any positive constant. The same holds for $\sigma_B/|A|$ and so the proof is complete.
\end{proof}

Lemma \ref{SymmetricCase} together with Garland's method essentially implies the statement about $2d$-homology in Theorem~\ref{RationalHomologyVanishingTheorem}. To give the full proof of Theorem~\ref{RationalHomologyVanishingTheorem} we need an analogous statement for the asymmetric case for~$\Pi_{d, d + 1}$. Within $\Pi_{d, d + 1}$, $\lk(\tau*\sigma)$ with $|\tau| = d-1$ and $|\sigma| = d$ is $H \sim H(n - (2d - 1), n^{-(d-1)\alpha}(1 - n^{-\alpha})^d, n^{-d\alpha}(1 - n^{-\alpha})^{d -1}, 0, 0, 1 - n^{-\alpha})$. With this motivation our main lemma for the asymmetric case is:
\begin{lemma}\label{AsymmetricCase}
For $d \geq 1$ and $\alpha < 1/(d + 1)$ and $\delta > 0$ with overwhelming probability $H \sim H(n - (2d - 1), n^{-(d-1)\alpha}(1 - n^{-\alpha})^d, n^{-d\alpha}(1 - n^{-\alpha})^{d -1}, 0, 0, 1 - n^{-\alpha})$ has spectral gap of the normalized Laplacian larger than $1 - \delta$.
\end{lemma}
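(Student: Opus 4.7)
The argument mirrors that of Lemma \ref{SymmetricCase} but must carefully track the asymmetry between $|A|$ and $|B|$. Since $\alpha < 1/(d+1) < 1/d$, both $|A|$ and $|B|$ are binomial with expectations $\Theta(n^{1-(d-1)\alpha})$ and $\Theta(n^{1-d\alpha})$ respectively, both tending to infinity, so by standard large deviation estimates we may condition on the overwhelmingly likely event that each is within a constant factor of its expectation. Note $|A| \geq |B|$ since $\alpha > 0$. Conditionally, the graph is a random bipartite graph on $A \cup B$ with each crossing edge present independently with probability $p := 1 - n^{-\alpha}$, so the average degrees in Theorem \ref{bipartiteexpansion} are $\bar{d} \approx p|B|$ on the $A$-side and $\bar{c} \approx p|A|$ on the $B$-side, with $\bar{c}/|A| \approx p$.

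The plan is to verify the vertex-expansion hypothesis of Theorem \ref{bipartiteexpansion} by the same Chernoff-plus-union-bound strategy used in Lemma \ref{SymmetricCase}. For fixed $U \subseteq A$ and $V \subseteq B$, $e(U,V)$ is binomial with $|U||V|$ trials and success probability $p$, so a Chernoff bound yields $|e(U,V) - p|U||V|| \leq \tfrac12\sqrt{C|A|p|U||V|}$ with failure probability at most $\exp(-C_1 |A|)$, where $C_1$ grows with $C$. Taking $C$ sufficiently large allows a union bound over the $2^{|A|+|B|}$ choices of $(U,V)$; the small discrepancy between $p$ and $\bar{c}/|A|$ is absorbed by the same estimate applied in the special case $U=A$, $V=B$, precisely as in Lemma \ref{SymmetricCase}. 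This certifies the hypothesis with $\epsilon = O(\sqrt{|A|p})$; crucially, $|A|$ (not $|B|$) appears because the union bound involves $2^{|A|+|B|}$ terms and $|A|$ dominates.

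Theorem \ref{bipartiteexpansion} then bounds the spectral gap below by $1 - \frac{O(\epsilon \log n)}{\delta(G)} - \frac{2\sigma_A \sigma_B}{\delta(A)\delta(B)}$. Standard Chernoff bounds give that with overwhelming probability every vertex of $A$ has degree $(1+o(1))p|B|$ and every vertex of $B$ has degree $(1+o(1))p|A|$, so $\delta(G) = \Theta(|B|)$. Hence the first error term is of order
\[
\frac{\sqrt{|A|p}\,\log n}{|B|} \;=\; \Theta\!\left(n^{(-1+(d+1)\alpha)/2}\right)\cdot \log n,
\]
which is $o(1)$ exactly when $\alpha < 1/(d+1)$ --- this is the sole reason the hypothesis must be strengthened from $\alpha < 1/d$. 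The standard-deviation term is controlled verbatim as in Lemma \ref{SymmetricCase}: each $\deg(a)$ for $a \in A$ is binomial with $|B|$ trials and success probability $p$, so individual Chernoff bounds plus a union bound over $a \in A$ yield $\sigma_A/|B| < \delta$ with overwhelming probability, and analogously $\sigma_B/|A| < \delta$. Since $\delta(A) = \Theta(|B|)$ and $\delta(B) = \Theta(|A|)$, the ratio $\sigma_A \sigma_B/(\delta(A)\delta(B))$ is also $o(1)$.

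The main obstacle, and the reason this lemma is not an immediate corollary of Lemma \ref{SymmetricCase}, is the asymmetric accounting: the minimum degree $\delta(G)$ is governed by the smaller side $B$, while the union bound determining $\epsilon$ in the Bilu--Linial input to Theorem \ref{bipartiteexpansion} is governed by the larger side $A$. This imbalance inflates the first error term by exactly a factor of $n^{\alpha/2}$, forcing the tighter hypothesis $\alpha < 1/(d+1)$. Once this single point of bookkeeping is handled, every remaining estimate is a direct transcription of the corresponding step in the proof of Lemma \ref{SymmetricCase}.
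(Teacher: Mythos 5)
Your argument matches the paper's proof of Lemma~\ref{AsymmetricCase} essentially verbatim: condition on $|A| = \Theta(n^{1-(d-1)\alpha})$ and $|B| = \Theta(n^{1-d\alpha})$, verify the vertex-expansion hypothesis of Theorem~\ref{bipartiteexpansion} via the same Chernoff-plus-union-bound argument as Lemma~\ref{SymmetricCase} with $\epsilon = \Theta(\sqrt{|A|p})$, and observe that the resulting middle term $\sqrt{|A|}\log n/|B|$ is $o(1)$ precisely because $\alpha < 1/(d+1)$. The extra explanatory remarks about why the asymmetry between $|A|$ and $|B|$ forces the tighter hypothesis are accurate and consistent with the paper's own discussion.
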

\begin{proof}
As in the proof of the symmetric case we can treat the sizes of $A$ and $B$ as basically fixed. More precisely we have that with overwhelming probability, 
\[1.01 n^{1 - (d - 1)\alpha} > |A| > 0.99 n^{1 - (d - 1)\alpha}\]
and
\[1.01 n^{1 - d\alpha} > |B| > 0.99 n^{1 - d\alpha}.\]

Under these overwhelmingly likely assumptions we apply Theorem \ref{bipartiteexpansion} when each edge between $A$ and $B$ is included with probability $1 - n^{-\alpha}$. Let $U \subseteq A$ and $V \subseteq B$  be fixed and we bound the probability that 
\[\left| e(U, V) - \frac{|U||V|\overline{c}}{|A|}\right| \leq \sqrt{C|A|p|U||V|} \]
for $C$ a fixed constant. The proof that this holds with probability at least $1 - \exp(-C_1 |A|)$ is exactly the same application of Chernoff bound as in the proof of Lemma \ref{SymmetricCase}. By taking $C$ sufficiently large, $C_1$ becomes large enough that we can take the union bound over all $2^{|A| + |B|}$ choices of $|U|$ and $|V|$. Thus just as in the symmetric case we have that with overwhelming probability the spectral gap of our random bipartite graph is at least
\[1 - \frac{\sqrt{C |A|} \log(2(2|A| + |B|)/\sqrt{C|A|})}{0.99|B|} - \frac{2\sigma_A\sigma_B}{0.99|A|B|}\]
As $\sqrt{C|A|} \leq \sqrt{C 1.01 n^{1 - (d - 1)\alpha}}$ and $|B| \geq 0.99 n^{1 - d\alpha}$ with 
$(1 - (d - 1)\alpha)/2 < 1 - d \alpha$ for $\alpha < 1/(d + 1)$, the middle term is $o(1)$. For the last term, we apply Chernoff bound exactly as in the proof of Lemma \ref{SymmetricCase}.
\end{proof}

We now have everything in place to prove Theorem \ref{RationalHomologyVanishingTheorem}. 
\begin{proof}[Proof of Theorem \ref{RationalHomologyVanishingTheorem}]
Suppose that $d \geq 1$ and $\alpha < 1/d$. By Lemma \ref{GluingStep} with overwhelming probability every $2d$-cycle in $\widetilde Z$ for $\widetilde Z \sim \widetilde Z(2n, n^{-\alpha})$ is rationally homologous to a cycle in $\Pi_{d + 1, d + 1}$. We will use Garland's method to verify that $\Pi_{d + 1, d + 1}$ has no rational homology in degree $2d$. Let $\tau*\sigma$ be a $(2d - 1)$-face in $\Pi_{d + 1, d + 1}$, where $|\tau| \leq |\sigma|$ without loss of generality. Thus we have either $|\tau| = d$ and $|\sigma| = d$ or $|\tau| = d - 1$ and $|\sigma| = d + 1$. 

In the first case $\lk(\tau*\sigma)$ within $\Pi_{d + 1, d + 1}$ is $H(n - 2d, n^{-d\alpha}(1 - n^{-\alpha})^d, n^{-d\alpha}(1 - n^{-\alpha})^d, 0, 0, 1 - n^{-\alpha})$. In this case by Lemma \ref{SymmetricCase} we have that with overwhelming probability $\lk(\tau*\sigma)$ has large spectral gap. As this holds with overwhelming probability and the number of choices for $\tau * \sigma$ is trivially at most $n^{2d}$, with overwhelming probability every such link has large spectral gap.

In the second case $\lk(\tau*\sigma)$ within $\Pi_{d + 1, d + 1}$ is $H(n - 2d, n^{-(d - 1)\alpha}(1 - n^{-\alpha})^{d + 1}, 0, n^{-\alpha}, 0, 0)$. This is just the two parameter random graph model $G(n - 2d; n^{-(d - 1)\alpha}(1 - n^{-\alpha})^{d + 1}, n^{-\alpha})$ with $p_0p_1 = \Theta(n^{-(d - 1)\alpha - \alpha}) = \Theta(n^{-\alpha d})$ and so by Lemma \ref{twoparametergeneral} with overwhelming probability every such link has large spectral gap. Thus by Garland's method $H_{2d}(\Pi_{d + 1, d + 1}; \Q) = 0$.

Now suppose that $\alpha < 1/(d + 1)$. By Lemma \ref{GluingStep} it is enough to show that $H_{2d - 1}(\Pi_{d, d + 1} \cup \Pi_{d+1, d}; \Q) = 0$. This will follow by induction if $H_{2d-1}(\Pi_{d, d + 1}; \Q) = 0$. Indeed $\Pi_{d + 1, d}$ is isomorphic as a simplicial complex to $\Pi_{d, d + 1}$ so if $H_{2d-1}(\Pi_{d, d + 1}; \Q)$ then so is $H_{2d-1}(\Pi_{d + 1, d}; \Q)$. The intersection of $\Pi_{d, d + 1}$ and $\Pi_{d + 1, d}$ with overwhelming probability is $\Pi_{d, d}$ (this invokes the fact that in the original flag complex every simplex of dimension at most $d$ belongs to a $d$-simplex when $\alpha < 1/d$). Thus we have the Mayer--Vietoris sequence
\[H_{2d - 1}(\Pi_{d + 1, d}; \Q) \oplus H_{2d - 1}(\Pi_{d, d + 1}; \Q) \rightarrow H_{2d - 1}(\Pi_{d + 1, d} \cup \Pi_{d, d + 1}; \Q) \rightarrow H_{2d - 2}(\Pi_{d, d}; \Q).\]
As $\alpha < 1/(d + 1) < 1/(d - 1)$, $H_{2d - 2}(\Pi_{d, d}) = 0$ with overwhelming probability. (If $d = 1$ we have to check that $\tilde{H}_0(\Pi_{1, 1})$, in this case $\Pi_{1, 1}$ is just a random bipartite graph on two copies of the original vertex set with an edge from $+u$ to $-v$ if $u$ is \emph{not} adjacent to $v$ in the original complex. Each vertex in the original complex has small degree when $\alpha > 0$ so it is routine to verify that this dense, random symmetric bipartite graph is connected with high probability.)

By induction therefore, with overwhelming probability $$H_{2d - 1}(\Pi_{d + 1, d} \cup \Pi_{d, d + 1}; \Q) = 0$$ if $H_{2d - 1}(\Pi_{d, d + 1}; \Q) = 0$. To verify that $H_{2d - 1}(\Pi_{d, d + 1}; \Q) = 0$ we observe that the link of a $2d - 2$ face $\tau *\sigma$ with $|\tau| = d - 1$ and $|\sigma| = d$ is $H(n - (2d - 1), n^{-(d - 1)\alpha}(1 - n^{-\alpha})^d, n^{-d\alpha}(1 - n^{-\alpha})^{d - 1}, 0, 0, 1 - n^{-\alpha}$ which is handled by Lemma \ref{AsymmetricCase}. If $|\tau| = d$ and $|\sigma| = d - 1$ then the link within $\Pi_{d, d + 1}$ is a two parameter random graph $G(n - (2d - 1); n^{-(d - 1)\alpha}(1 - n^{-\alpha})^d, n^{-\alpha})$ with $p_0p_1 = \omega(\log n/n)$ so Lemma \ref{twoparametergeneral} applies. If $|\tau| = d - 2$ then $|\sigma| = d + 1$ then the link within $\Pi_{d, d + 1}$ is $G(n - (2d - 1); n^{-(d-2)\alpha}(1 - n^{-\alpha})^{d + 1}, n^{-\alpha})$ with $n^{-d\alpha}n^{-\alpha} = n^{-(d - 1)\alpha} = \omega(\log n/n)$, so in this final case again Lemma \ref{twoparametergeneral} applies. Thus by Garland's method $H_{2d - 1}(\Pi_{d, d + 1}; \Q) = 0$.
\end{proof}
\begin{proof}[Proof of (2) and (3) of Theorem \ref{DZtheorem} and (3) of Theorem \ref{maintheorem}]
Points (2) and (3) of Theorem \ref{DZtheorem} follow immediately from Theorem \ref{RationalHomologyVanishingTheorem} and the Hurewicz Theorem (Theorem~\ref{thm:hurewicz}). 
Part (3) of Theorem \ref{maintheorem} follows from Theorem~\ref{thm:iso-hg}.
\end{proof}
\section{Homology at and above dimension $2d + 1$}\label{sec:CritialDim}
The goal of this section is to prove that $H_{2d + 1}(\widetilde Z; \Q) \neq 0$ for $\widetilde Z \sim \widetilde Z(2n, n^{-\alpha})$ and $1/(d + 1) < \alpha < 1/d$, but that integer homology in higher dimensions is vanishing, and the same is true for $Z \sim Z(n, n^{-\alpha})$. All of this is relatively straightforward. We start with the statement about homology above dimension $2d + 1$.

\begin{theorem}\label{CollapsiblityTheorem}
For $d \geq 1$ and $\alpha > 1/(d + 1)$ with high probability $H_k(Z) = H_{k}(\widetilde Z) = 0$ for $k \geq 2d + 2$, $Z \sim Z(n, n^{-\alpha})$, and $\widetilde Z \sim \widetilde Z(n, n^{-\alpha})$.
\end{theorem}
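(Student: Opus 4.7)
\textbf{Proof outline for Theorem~\ref{CollapsiblityTheorem}.}

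The plan is to show that with overwhelming probability $\widetilde Z$ collapses onto a subcomplex of dimension at most $2d+1$, which immediately yields $H_k(\widetilde Z)=0$ for all $k\geq 2d+2$. The starting observation is that any face $\sigma*\tau$ of $\widetilde Z$ with $|\sigma|+|\tau|\geq 2d+3$ satisfies $\max(|\sigma|,|\tau|)\geq d+2$; without loss of generality $|\sigma|\geq d+2$, so $\sigma$ is a clique of size at least $d+2$ in $G\sim G(n,n^{-\alpha})$. Since $\alpha>1/(d+1)$ implies $(d+2)\alpha>1$, the expected number of common neighbors in $G$ of a fixed such clique is $O(n\cdot n^{-(d+2)\alpha})=o(1)$, which is the local sparsity I would exploit.

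To produce the collapse, I build a discrete Morse matching on the face poset of $\widetilde Z$. Fix a total order on $V(G)$; for each face $F=\sigma*\tau$ with $|\sigma|+|\tau|\geq 2d+3$, choose (by convention in favor of the plus side when tied) the larger side $\sigma$ with $|\sigma|\geq d+2$, let $v=\min\sigma$, and pair $F$ with the facet $F'=(\sigma\setminus\{v\})*\tau$. The pair $(F',F)$ is a valid elementary collapse exactly when $F'$ is a free face of $F$, i.e., no vertex $w\neq v$ extends $F'$ to another face of dimension $\dim F$. Concretely this means (i) no $w\notin\sigma\cup\tau$ is adjacent in $G$ to every vertex of $\sigma\setminus\{v\}$ and non-adjacent to every vertex of $\tau$ (blocking alternative plus-extensions), and (ii) no such $w$ is adjacent to $\tau$ and non-adjacent to $\sigma\setminus\{v\}$ (blocking minus-extensions). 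For a fixed $F$ the expected number of witnesses violating either condition is $O(np^{|\sigma|-1})=O(n^{1-(d+1)\alpha})=o(1)$, by the assumption on $\alpha$.

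The hard part is making the matching valid \emph{uniformly} across all $\Theta(n^{2d+3})$ top-dimensional faces at once: a naive union bound over single-face failure probabilities does not close. I would address this by sharpening the per-face bound to super-polynomial decay via a concentration argument in the spirit of~\cite{HKP}, or by iterating elementary collapses and locally adjusting the choice of $v$ at the rare problematic faces (those witnessing an enlarging clique in $G$), whose expected count can be made super-polynomially small by a refined first-moment computation on $(d+3)$-clique-like configurations. Finally, to obtain the statement for $Z$, observe that the matching depends only on $G$ and the fixed total order on $V(G)$, so it is equivariant under the free $\Z/2$-action on $\widetilde Z$ swapping plus and minus copies; the collapse therefore descends to the quotient $Z$, giving $H_k(Z)=0$ for $k\geq 2d+2$.
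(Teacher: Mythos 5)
Your route is genuinely different from the paper's, and it has real gaps. The paper's proof is much shorter: it invokes the known result that the random flag complex $X\sim X(n,n^{-\alpha})$ is $(d+1)$-collapsible with high probability for $\alpha>1/(d+1)$ \cite{Malen,NewmanOneSideCollapse}, and then proves a purely deterministic lemma (Lemma~\ref{LiftCollapsing}) that an elementary collapse $(f,f\cup\{v\})$ in $X$ lifts to a sequence of elementary collapses in $\mdeljoin{X}$; iterating, $\mdeljoin{X}$ collapses to the separated deleted join of a $d$-dimensional complex, which has dimension $\le 2d+1$. Since the lifted collapses are symmetric under the $\Z/2$-action, the collapse descends to $Z$. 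All the probabilistic heavy lifting is thereby outsourced to the cited collapsibility theorem, and no union bound over high-dimensional faces of $\widetilde Z$ is needed.

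Your approach, in contrast, tries to build a discrete Morse matching directly on $\widetilde Z$, and you correctly identify the point where it breaks: the per-face failure probability is $O(n^{1-(d+1)\alpha})$, which decays only polynomially, so a union bound over the polynomially many faces of dimension $\ge 2d+2$ cannot close. The remedies you gesture at (a concentration estimate ``in the spirit of \cite{HKP},'' or locally repairing the matching) are not worked out, and it is far from clear that either works: the failure events for distinct faces sharing a large clique are highly correlated, and the HKP-style concentration is a statement about eigenvalues, not about the nonexistence of common-neighbor witnesses, so it is not directly applicable. There is also a second, unaddressed gap: you pair \emph{every} face $F=\sigma*\tau$ with $|\sigma|+|\tau|\ge 2d+3$ to the facet $(\sigma\setminus\{\min\sigma\})*\tau$, but if that facet still has $\ge 2d+3$ vertices your rule also wants to pair it downward to its own facet, so the assignment is not a matching; and even where it is, acyclicity of the resulting Morse matching is asserted rather than verified. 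Both issues are avoided entirely by the paper's ``collapse in $X$, lift to $\mdeljoin X$'' strategy, which I would recommend adopting.
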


The proof of this theorem is based on the fact that $\alpha > 1/(d + 1)$ implies that $X \sim X(n, n^{-\alpha})$ is $(d + 1)$-collapsible \cite{Malen, NewmanOneSideCollapse}. The following deterministic statement holds and so if $X$ collapses to a $d$-dimensional complex then the separated deleted join of $X$ collapses to a $2d + 1$ complex.
\begin{lemma}\label{LiftCollapsing}
If $(f, f \cup \{v\})$ is an elementary collapse in $X$ then there exists a sequence of elementary collapses in $\mdeljoin{X}$ to $\mdeljoin{(X \setminus \{f, f \cup \{v\}\})}$.
\end{lemma}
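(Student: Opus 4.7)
The plan is to prescribe an explicit sequence of elementary collapses in $\mdeljoin{X}$ removing exactly those faces that use $f$ or $f\cup\{v\}$ on either side, so that the remaining complex is $\mdeljoin{X\setminus\{f,f\cup\{v\}\}}$. These faces come in four classes,
\[
(f\times\{-1\})\cup(\tau\times\{+1\}), \qquad ((f\cup\{v\})\times\{-1\})\cup(\tau\times\{+1\}),
\]
together with their reflections under swapping the two sides. The natural pairing is to match each face of the first class with the face of the second obtained by adjoining $v$ on the minus side, and symmetrically on the plus side; this mirrors the pairing of $f$ with $f\cup\{v\}$ that defines the given elementary collapse in $X$.

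I would carry out the collapses in a specific order: first all minus-side pairs, indexed by decreasing $|\tau|$, then all plus-side pairs indexed by decreasing $|\sigma|$. At each minus-side step the smaller face $(f\times\{-1\})\cup(\tau\times\{+1\})$ has two types of cofaces in $\mdeljoin{X}$. Cofaces obtained by enlarging the minus side are forced, by the free-face hypothesis $\{\rho\in X : f\subsetneq\rho\}=\{f\cup\{v\}\}$, to go through $v$, so the unique candidate there is the proposed partner. Cofaces obtained by enlarging the plus side belong to the same class with strictly larger $\tau$ and have therefore already been removed thanks to the ordering. Only the proposed partner remains as a coface, which makes the collapse valid; the argument on the plus side is identical by symmetry.

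The main obstacle I anticipate is handling the case in which the intended partner $((f\cup\{v\})\times\{-1\})\cup(\tau\times\{+1\})$ is not itself a face of $\mdeljoin{X}$, because some vertex of $\tau$ is adjacent to $v$ in $X$ and the nonadjacency condition of the separated deleted join is violated. In that situation $(f\times\{-1\})\cup(\tau\times\{+1\})$ has no coface on the minus side, so it cannot be collapsed as a free face of the intended partner, and some alternative must be supplied. I would try to resolve this by making the offending face the larger partner in a collapse against a facet obtained by dropping a $v$-neighbor out of $\tau$, and by arranging the order so that such facets are free in the currently remaining complex when they are needed. Working out this bookkeeping cleanly, so that every exceptional face ultimately disappears, is the technically delicate heart of the argument and is where I would expect the bulk of the write-up to lie.
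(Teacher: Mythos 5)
Your plan reproduces the paper's own argument: remove the two families of faces $(f\times\{-1\})\cup(\tau\times\{+1\})$ and $((f\cup\{v\})\times\{-1\})\cup(\tau\times\{+1\})$ (and their reflections), pair them by adjoining $v$, and order the collapses by decreasing $|\tau|$ so that plus-side cofaces are already gone when needed. The obstacle you flag at the end is exactly the step the paper takes for granted: the paper's proof asserts that the maximal $f*\sigma$ ``only belongs to $(f\cup\{v\})*\sigma$'' without checking that $(f\cup\{v\})*\sigma$ is a face of $\mdeljoin{X}$ at all. If $\sigma$ contains a neighbor of $v$, the separated-deleted-join condition fails, $f*\sigma$ is then a facet with no coface whatsoever, and it cannot be the smaller member of any elementary collapse. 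So you have located a genuine gap, and it is a gap in the paper's proof as well, not only a difficulty in your write-up.

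Moreover, no amount of re-matching can close it, because the lemma is false as stated. Take $G$ the path $1\text{--}2\text{--}3$, $X=\overline{G}$, $f=\{1\}$, $v=2$; then $(\{1\},\{1,2\})$ is an elementary collapse in $X$. But $\mdeljoin{X}$ is a hexagon (in cyclic order $-1,-2,-3,+1,+2,+3$), which has no free face at all and hence admits no elementary collapse, whereas $\mdeljoin{(X\setminus\{\{1\},\{1,2\}\})}$ is a pair of disjoint edges; these two complexes do not even have the same homotopy type. Here the stranded face is $\{-1,+3\}$: its partner $\{-1,-2,+3\}$ is not a face since $3$ is adjacent to $v=2$, and your fallback $\{-1\}=f*(\tau\setminus\{3\})$ is not free either, since $\{-1\}\subset\{-1,-2\}$. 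Any correct version of the lemma needs either an extra hypothesis on the pair $(f,f\cup\{v\})$, or a weaker conclusion than collapsing onto $\mdeljoin{(X\setminus\{f,f\cup\{v\}\})}$ exactly; note that the paper's downstream use (Theorem~\ref{CollapsiblityTheorem}) only needs $\mdeljoin{X}$ to collapse to a complex of dimension at most $2d+1$, which is a strictly weaker claim and may be provable by a modified argument.
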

\begin{proof}
We want to show that if we can collapse away $(f, f \cup \{v\})$ in $X$ then we can collapse away exactly all faces $f * \Sigma_f \cup \Sigma_f*f$. For any maximal face $\sigma$ which is maximal as the negative side of $f*\sigma$ in $f * \Sigma_f$, we have that $f*\sigma$ is a free face in $\mdeljoin{X}$ as it only  belongs to $(f \cup \{v\}) * \sigma$, so it may be collapsed away. We can do this iteratively to get rid of all nonempty faces in $\Sigma_f$ by elementary collapses and then finally collapse $f$ into $f \cup \{v\}$ on the plus side. We can then do the same thing for $\Sigma_f * f$. In this way we eliminate the subcomplex of all faces that contain $f*\emptyset$ or $\emptyset*f$. Thus we collapse $\mdeljoin{X}$ to $\mdeljoin{(X \setminus \{f, f \cup \{v\}\})}$.
\end{proof}

\begin{proof}[Proof of Theorem \ref{CollapsiblityTheorem}]
With high probability $X \sim X(n, n^{-\alpha})$ is $(d + 1)$-collapsible for $\alpha > 1/(d + 1)$ by the main result of \cite{Malen}. In the case that it is $(d + 1)$-collapsible, $\mdeljoin{X}$ collapses to a complex of dimension at most $2d + 1$. As the collapses happen symmetrically $Z$ also collapses to a complex of dimension at most $2d + 1$.
\end{proof}

For homology in dimension $2d + 1$ we use the simple inequality that holds for any simplicial complex: 
\[\beta_{2d + 1} \geq f_{2d + 1} - f_{2d + 2} - f_{2d}.\]
Nonvanishing of $\beta_{2d + 1}(\widetilde Z; \Q)$ is then immediate by counting cliques in the graph of~$\widetilde Z$.

\begin{theorem}\label{FvectorCounts}
For $d \geq 1$, $\widetilde Z \sim \widetilde Z(2n, n^{-\alpha})$ if $\alpha < 1/d$ then with overwhelming probability $f_i(\widetilde Z) \gg f_{i - 1}(\widetilde Z)$ for $i \leq 2d + 1$, and if $1/(d + 1) < \alpha$ then with overwhelming probability $f_i(\widetilde Z) \ll f_{i - 1}(\widetilde Z)$ for $i \geq 2d + 2$. The same holds for $Z  \sim Z(n, n^{-\alpha})$ as well.
\end{theorem}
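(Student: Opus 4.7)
The plan is to count faces of $\widetilde Z$ directly, organized by their split between plus and minus vertices. A face of $\widetilde Z = \mdeljoin{\overline G}$ on $i$ vertices corresponds bijectively to an ordered pair $(\sigma,\tau)$ of disjoint, nonadjacent cliques in $G \sim G(n,n^{-\alpha})$, where $\sigma$ gives the plus vertices and $\tau$ the minus vertices, with $|\sigma|=k$, $|\tau|=\ell$, and $k+\ell = i$. Writing $N_{k,\ell}$ for the number of such ordered pairs,
\[
f_{i-1}(\widetilde Z) \;=\; \sum_{k+\ell = i} N_{k,\ell}, \qquad \E[N_{k,\ell}] \;=\; \binom{n}{k}\binom{n-k}{\ell}\, p^{\binom{k}{2}+\binom{\ell}{2}}(1-p)^{k\ell},
\]
and for $p = n^{-\alpha}$ with $k,\ell$ bounded the factor $(1-p)^{k\ell} = 1 - o(1)$, so $\E[N_{k,\ell}] = \Theta(n^{k+\ell - \alpha(\binom{k}{2}+\binom{\ell}{2})})$. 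Since $\binom{k}{2}+\binom{\ell}{2}$ subject to $k+\ell = i$ is minimized by the most balanced split $\{k,\ell\} = \{\lceil i/2\rceil, \lfloor i/2\rfloor\}$, this split dominates the sum.

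A direct computation then shows that the exponent at the balanced split on $i+1$ vertices exceeds that at the balanced split on $i$ vertices by exactly $1 - \alpha \lfloor i/2 \rfloor$. Hence for $\alpha < 1/d$ the increment is strictly positive whenever $\lfloor i/2 \rfloor \leq d$, that is for all $i \leq 2d+1$, giving $\E[f_i(\widetilde Z)]/\E[f_{i-1}(\widetilde Z)] \to \infty$; and for $\alpha > 1/(d+1)$ it is strictly negative whenever $\lfloor i/2 \rfloor \geq d+1$, that is for $i \geq 2d+2$, giving $\E[f_i(\widetilde Z)]/\E[f_{i-1}(\widetilde Z)] \to 0$. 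To promote this from expectation to almost sure, I would apply Kim--Vu polynomial concentration to each $N_{k,\ell}$ whose mean tends to infinity: each such $N_{k,\ell}$ is a degree-$(\binom{k}{2}+\binom{\ell}{2}+k\ell)$ polynomial in the independent edge indicators of $G$ with mean a positive power of $n$, so Kim--Vu (or Janson's inequalities applied to the corresponding configuration count) yields $|N_{k,\ell} - \E N_{k,\ell}| \leq \epsilon \E N_{k,\ell}$ with failure probability $\exp(-n^{c})$ for some $c > 0$. For those $(k,\ell)$ with $\E N_{k,\ell} = n^{-\Omega(1)}$, Markov's inequality gives $N_{k,\ell}=0$ with overwhelming probability. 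A union bound over the $O(1)$ relevant splits controls $f_i(\widetilde Z)$ for all $i \leq 2d+3$ simultaneously.

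Finally, the free $\Z/2\Z$-action on $\widetilde Z$ sends the face with plus part $\sigma$ and minus part $\tau$ to the face with plus part $\tau$ and minus part $\sigma$, and since any face of the separated deleted join has $\sigma \cap \tau = \emptyset$, every orbit has size exactly~$2$. Hence $f_i(Z) = f_i(\widetilde Z)/2$ and the same asymptotic ratios hold for $Z \sim Z(n,n^{-\alpha})$. The main obstacle is the concentration step, since the theorem demands super-polynomial rather than merely high-probability failure bounds; Kim--Vu (combined with Markov for the vanishing-expectation terms) supplies this, but one must check in each regime of $(\alpha,k,\ell)$ that $\E N_{k,\ell}$ is a genuine power of $n$ (positive or negative, bounded away from~$0$), so that no term sits in a critical window where concentration would only be polynomial. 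In the open intervals $\alpha \in (1/(d+1),1/d)$ this is straightforward to verify directly from the exponent $k+\ell - \alpha(\binom{k}{2}+\binom{\ell}{2})$.
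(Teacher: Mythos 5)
Your approach is the same as the paper's: compute $\E[N_{k,\ell}]$, observe that the balanced split dominates, verify that the exponent $i - \alpha\bigl(\binom{\lfloor i/2\rfloor}{2}+\binom{\lceil i/2\rceil}{2}\bigr)$ changes by exactly $1-\alpha\lfloor i/2\rfloor$ as $i$ increases, and then invoke a polynomial concentration inequality (the paper cites a textbook result of Frieze--Karo\'nski for clique counts where you cite Kim--Vu/Janson). The exponent arithmetic, the identification of the balanced split as dominant, and the $f_i(Z)=f_i(\widetilde Z)/2$ observation all match the paper.

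One step as written is wrong, though easily repaired. You assert that when $\E N_{k,\ell}=n^{-\Omega(1)}$, ``Markov's inequality gives $N_{k,\ell}=0$ with overwhelming probability.'' Markov gives only $\Pr(N_{k,\ell}\ge 1)\le \E N_{k,\ell}=n^{-c}$ for a fixed constant $c>0$, which is polynomial, not super-polynomial, failure probability. In the paper's terminology this is ``with high probability'' rather than ``with overwhelming probability,'' and the theorem needs the latter. The fix is standard: rather than handling each $N_{k,\ell}$ separately, apply concentration once to the full sum $f_{i-1}=\sum_{k+\ell=i}N_{k,\ell}$, which is itself a bounded-degree polynomial in the edge indicators with $\E f_{i-1}=\Theta(n^{E(i)})$ where $E(i)>0$ in the relevant range; the dominated splits with $n^{-\Omega(1)}$ expectation are then absorbed into the overall deviation bound and never need a separate Markov argument. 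With that repair your proof goes through and coincides with the paper's.
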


\begin{proof}
We count the expected number of $i$-dimensional faces in $\widetilde Z$ to be
\[\sum_{k + \ell = i + 1} 2\binom{n}{k}\binom{n - k}{\ell}(n^{-\alpha})^{\binom{k}{2} + \binom{\ell}{2}}(1 - n^{-\alpha})^{k\ell} = \Theta\left(n^{i + 1} n^{-\alpha \left(\binom{\lfloor (i + 1)/2 \rfloor}{2} + \binom{\lceil (i + 1)/2 \rceil}{2} \right)}\right).\]
Now we verify that 
\[i + 1 - \alpha \left(\binom{\lfloor (i + 1)/2 \rfloor}{2} + \binom{\lceil (i + 1)/2 \rceil}{2} \right)\]
is increasing in $i$ with $\alpha < 1/d$ and $i \leq 2d + 1$ and decreasing in $i$ with $1/(d + 1) < \alpha$ and $i \geq 2d + 2$. We consider the difference:
\begin{eqnarray*}
i + 1 - \alpha \left(\binom{\lfloor (i + 1)/2 \rfloor}{2} + \binom{\lceil (i + 1)/2 \rceil}{2} \right) - \left(i - \alpha \left(\binom{\lfloor i/2 \rfloor}{2} + \binom{\lceil i/2 \rceil}{2} \right)\right).
\end{eqnarray*}
This is equal to 
\begin{eqnarray*}
1 - \alpha \left( \binom{\lceil (i + 1)/2 \rceil}{2} - \binom{\lfloor i/2 \rfloor}{2} \right) &=& 1 - \alpha \left( \binom{\lfloor i/2 \rfloor + 1}{2} - \binom{\lfloor i/2 \rfloor}{2} \right) \\
&=& 1 - \alpha \left(\lfloor i/2 \rfloor \right).
\end{eqnarray*}
This is positive if 
\[\alpha < \frac{1}{\lfloor i/2 \rfloor}\]
and nonpositive otherwise. Thus for $\alpha < 1/d$ we have that the number of $i$-dimensional faces is increasing in $i$ for $i \leq 2d + 1$. On the other hand if $\alpha > 1/(d + 1)$ then the number of $i$-dimensional faces is increasing in $i$ for $i \geq 2d + 2$. To this point we have just proved that the expected value of the $i$-dimensional faces follows the unimodal pattern that we want. However, standard concentration inequalities for the number of cliques in an Erd\H{o}s--R\'{e}nyi random graph show that the $f_i$'s grow at the same rate as their expected values with overwhelming probability (c.f. Theorem 5.6 of \cite{FriezeRandomGraphs}). 

Finally as $f_i(\widetilde Z) = 2f_i(Z)$ the exact same inequalities hold for~$Z$.
\end{proof}

\begin{proof}[Proof of points 4 and 5 of Theorems \ref{maintheorem} and \ref{DZtheorem}]
In both theorems point 4 follows by Theorem \ref{FvectorCounts} and point 5 by Theorem \ref{CollapsiblityTheorem}.
\end{proof}

\section{The missing homology group}
\label{MissingGroup}

We have shown that for $1/(d + 1) < \alpha < 1/d$, $H_i(\widetilde Z; \Q) =0$ for $i \leq 2d -2$, $i = 2d$, and $i \geq 2d + 2$. We also verified that $H_{2d + 1}(\widetilde Z; \Q) \neq 0$. However, we do not know what happens with $H_{2d - 1}(\widetilde Z)$. Here we discuss why attempting to apply Garland's method to show that this homology group vanishes with $1/(d + 1) < \alpha$ does not work the same way as the homology vanishing statement for $H_{2d}(\widetilde Z)$ or the homology vanishing statement for $H_{2d - 1}(\widetilde Z)$ when $\alpha < 1/(d + 1)$. 

Recall that the strategy for Theorem \ref{RationalHomologyVanishingTheorem} involved finding a ``with overwhelming probability" bound for vertex expansion of a random bipartite graph and then applying Theorem \ref{bipartiteexpansion} to use strong vertex expansion to derive a bound on the spectral expansion. Proving a homology vanishing statement for the $(2d - 1)$st homology group when $\alpha > 1/(d + 1)$ could be accomplished by proving that we actually \emph{do} have strong enough vertex expansion in the case of the resulting random bipartite graph for Theorem \ref{bipartiteexpansion} to apply. Alternatively, maybe a stronger version of Theorem \ref{bipartiteexpansion} holds which still gives the same conclusion on spectral expansion but with some weaker assumption on vertex expansion.

We do not address the latter case here, but for the former case we demonstrate that new arguments would be needed to establish sufficiently strong vertex expansion. We first contrast the ``symmetric case" that the $2d$th homology group is vanishing for $\alpha < 1/d$ in the statement of Theorem \ref{RationalHomologyVanishingTheorem} with the ``asymmetric case" about the $(2d - 1)$st homology group in the statement of Theorem \ref{RationalHomologyVanishingTheorem}. ``Symmetric" and ``asymmetric" here refer to the relative sizes of the two partite sets $A$ and $B$ in Lemmas \ref{SymmetricCase} and \ref{AsymmetricCase}. With the bounds we have from Theorem \ref{bipartiteexpansion}, we see that in both cases we compare $|A|$ with $|B|$ and in order to establish concentration on the spectral expansion we need to have $\sqrt{|A|} \ll |B|$. In the symmetric case, $A$ and $B$ both have size $\Theta(n^{1 -d\alpha})$. In particular they are both about the same size, hence the name "symmetric case" and clearly $\sqrt{|A|} \ll |B|$ when $\alpha < 1/d$. By contrast in the asymmetric case $|A| = \Theta(n^{1 - (d - 1)\alpha})$ and $|B| = \Theta(n^{1 - d \alpha})$ so to have $\sqrt{|A|} \ll |B|$ we have to have $\alpha < 1/(d + 1)$. The $\sqrt{|A|}$ here is coming from the vertex expansion bound, thus to improve the result we might want to see if we can get a better vertex expansion bound. This reduces to the following question about random bipartite graphs. 
\begin{question}
Fix $\epsilon \in [0, 1)$, suppose we have a random bipartite graph with $n$ vertices in $A$, $m$ vertices in $B$ with $m \approx n^{1 - \epsilon}$ and each edge between $A$ and $B$ included independently with probability $p$ where $p = 1 - n^{-\epsilon}$. Is it true that there is some $\delta > 0$ so that with overwhelming probability every $U \subseteq A$ and every $V \subseteq B$ satisfies
\[|e(U, V) - p|U||V|| \leq K|B|^{1 - \delta} \sqrt{|U||V|}\]
for some large absolute constant $K$?
\end{question}
We do not know the answer to this question, but we do know that there is a simple proof by a union bound that works for $\epsilon < 1/2$, but does not work for $\epsilon \geq 1/2$. This will ultimately imply that applying Garland's method to prove a homology vanishing statement about the $(2d-1)$st homology group is definitely \emph{not} simply a matter of finding a better tail bound for estimating vertex expansion. A proof using Garland's method that the $(2d -1)$st homology group vanishes for $1/(d + 1) < \alpha < 1/d$ requires different ideas from what we have done already.

We first show an affirmative answer to the question above for $\epsilon < 1/2$. By Chernoff bound we can find a $\delta$ that works for the upper tail for any $\epsilon$:
\begin{eqnarray*}
\Pr(e(U, V) \geq p|U|V| + K|B|^{1 - \delta}\sqrt{|U||V|}) &=& \Pr(|U||V| - e(U, V) \geq (1 - p)|U||V| - K|B|^{1 - \delta}\sqrt{|U||V|})  \\
&\leq& \exp(-K^2|B|^{2 - 2\delta}|U||V|/(2(1 - p)|U||V|)) \\ 
&\leq& \exp(-0.49K^2 |B|^{2 - 2\delta}/(1 - p)) \\
&=& \exp(-0.49 K^2 |B|^{2 - 2\delta + \epsilon/(1 - \epsilon)})
\end{eqnarray*}
for $n$ large enough.
Now the number of choices for $U$ and $V$ is at most $2^{|A| + |B|}$ so by a union bound the probability that some pair $U$ and $V$ has way more edges going across than expected is at most
\[\exp(2|A| \log 2 - 0.49K^2 |B|^{2 -2 \delta + \epsilon/(1 - \epsilon)}).\]
So we can take any $\delta \leq 1/2$ regardless of $\epsilon$ and the upper tail estimate holds for all choices of $U$ and $V$.

For the lower tail
\[\Pr(e(U, V) \leq p|U||V| - K|B|^{1 - \delta}\sqrt{|U||V|}) \leq \exp(-0.49 K^2|B|^{2 - 2\delta})\]
for $n$ large enough. This survives the union bound with all $2^{|A| + |B|}$ sets if we can pick $\delta$ so that $(1 - \epsilon)(2 - 2\delta) \geq 1$ (with equality requiring setting $K$ to be a large constant), so this runs into a problem if $\epsilon \geq 1/2$, but is fine if $\epsilon < 1/2$.

When $\epsilon \geq 1/2$, however we can show that we cannot take a simple union bound over all $\exp(\Theta(n))$ subsets $U$ and $V$, as the following claim shows.
\begin{claim}
For $\epsilon \geq 1/2$ if $|A| = n$, $|B| = n^{1 - \epsilon}$ and $p = 1 - n^{-\epsilon}$ there is $U \subseteq A$ and $V \subseteq B$ so that for any $\delta > 0$
\[\Pr(e(U, B) \leq p|U||V| - K|B|^{1 - \delta}\sqrt{|U||V|}) \geq \exp(-o(n))\]
\end{claim}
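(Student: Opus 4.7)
The plan is to exhibit, for each fixed $\delta > 0$, a concrete pair $(U,V)$ for which a single-term Stirling lower bound on a binomial tail already gives probability at least $\exp(-o(n))$. Concretely, take $V := B$ and let $U \subseteq A$ be any subset of size $u := \lceil 2K^2\, n^{(1-\epsilon)(1-2\delta)}\rceil$; the hypothesis $\epsilon \geq 1/2$ together with $\delta > 0$ gives $(1-\epsilon)(1-2\delta) < 1$, so $u \leq n$ for $n$ large. Writing $N := |U||V| = u\, n^{1-\epsilon}$, $q := 1-p = n^{-\epsilon}$, and $Y := N - e(U,V)$, we have $Y \sim \mathrm{Bin}(N,q)$, and the bad event of the claim rewrites as
\[
\bigl\{Y \geq Nq + K|B|^{1-\delta}\sqrt{N}\bigr\}.
\]

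A direct substitution of $u$ into $K|B|^{1-\delta}\sqrt{N}$ yields exactly $N/\sqrt{2}$, while $Nq = N n^{-\epsilon} = o(N)$; so the event becomes $\{Y \geq (1/\sqrt{2} + o(1))N\}$ and is in particular feasible for large $n$. Setting $k := \lceil Nq + K|B|^{1-\delta}\sqrt{N}\rceil$, I bound the tail from below by a single term of the binomial sum:
\[
\Pr(Y \geq k) \;\geq\; \binom{N}{k} q^{k}(1-q)^{N-k}.
\]
Stirling gives $\log\binom{N}{k} = N h(k/N) + O(\log N) = \Theta(N)$ with $h$ the binary entropy (using $k/N \to 1/\sqrt{2}$); the factor $(1-q)^{N-k}$ contributes $O(Nq) = o(N)$; and the dominant contribution $\log q^{k} = -\epsilon k \log n = -\Theta(N\log n)$ controls the estimate. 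Finally, since $\epsilon \geq 1/2$, we have $(1-\epsilon)(2-2\delta) = 2(1-\epsilon)(1-\delta) \leq 1-\delta < 1$, so $N = \Theta\bigl(n^{(1-\epsilon)(2-2\delta)}\bigr)$ satisfies $N\log n = O(n^{1-\delta}\log n) = o(n)$. Therefore $\log \Pr(Y \geq k) \geq -O(N\log n) = -o(n)$, which is the required $\Pr \geq \exp(-o(n))$.

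The computation is otherwise routine; the subtle point is the calibration of $u$. It is chosen precisely so that the required deviation $K|B|^{1-\delta}\sqrt{N}$ is a constant fraction of $N$ (here $1/\sqrt{2}$), placing the event in the binomial heavy-tail regime where the single-term Stirling lower bound is already tight enough. A smaller $u$ would make the event infeasible because the required deviation would exceed the available gap $N-Nq$; a much larger $u$ would force $N\log n$ to fail to be $o(n)$ once $\epsilon$ approaches $1/2$. This window is exactly what the $\epsilon \geq 1/2$ hypothesis opens up, and it is precisely this failure of the naive union bound that the authors wish to highlight.
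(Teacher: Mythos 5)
Your argument is correct, but it takes a more elaborate route than the one in the paper. You both choose $(U,V)$ so that $N := |U||V| = \Theta\bigl(n^{2(1-\epsilon)(1-\delta)}\bigr)$, which is the calibration that makes the required deviation a constant fraction of $N$ while keeping $N\log n = o(n)$. Where you differ is in how the probability is lower-bounded. You compute the tail $\Pr(Y \geq k)$ for $Y \sim \Bin(N,q)$, $k \approx N/\sqrt{2}$, by retaining a single term $\binom{N}{k}q^k(1-q)^{N-k}$ and invoking Stirling, then collecting the $\Theta(N)$, $-\Theta(N\log n)$ and $o(N)$ contributions. The paper instead uses the much cruder but cleaner observation that the single extremal outcome $e(U,V)=0$ already lies inside the bad event whenever $p|U||V| \geq K|B|^{1-\delta}\sqrt{|U||V|}$, and this outcome has probability exactly $q^{N} = n^{-\epsilon N} \geq \exp(-o(n))$ once $N = o(n/\log n)$. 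Both get the same order $\exp(-\Theta(N\log n))$; your version is more informative about the shape of the binomial tail but the zero-edge shortcut avoids Stirling and the entropy bookkeeping entirely. One cosmetic remark: the identity ``$K|B|^{1-\delta}\sqrt{N}$ is exactly $N/\sqrt{2}$'' only holds before taking the ceiling in $u$, and for $\delta > 1/2$ the ceiling forces $u=1$, at which point the deviation is $o(N)$ rather than $N/\sqrt{2}$. This does not break anything (the tail is only easier to hit and your bound only improves), but it is worth noting so the reader does not read the calibration as an identity; alternatively, observe that the statement is monotone in $\delta$ and only the small-$\delta$ regime needs the calibration.
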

\begin{proof}
For any $|U|$ and $|V|$ the probability that $e(U, V) = 0$ is clearly $(n^{-\epsilon})^{|U||V|}$. Now
\begin{eqnarray*}
n^{-\epsilon |U||V|} \geq \exp(-o(n))
\end{eqnarray*}
whenever
\[|U||V| \leq o \left( \frac{n}{\log n} \right).\]
On the other hand $e(U, V) = 0$ is sufficient to conclude that 
$e(U, V) \leq p|U||V| - K|B|^{1 - \delta}\sqrt{|U||V|}$ whenever 
\[p|U||V| \geq K|B|^{1 - \delta}\sqrt{|U||V|}\]
i.e. for
\[|U||V| = \Theta(n^{2(1 - \epsilon)(1 - \delta)})\]
So if 
\[1 > 2(1 - \epsilon)(1 - \delta)\]
Then we can find a $|U||V|$ that satisfies both inequalities. As $(1 - \epsilon) \leq 1/2$ this holds for any $\delta > 0$. 
\end{proof}

From this claim we see that while it may turn out to be true that a random bipartite graph with $n$ vertices on one side and $n^L$ vertices on the other, for $L$ arbitrarily large, and $p$ close to 1 is a spectral expander {with overwhelming probability}, the proof when $L \geq 2$ requires a different approach than the approach we take with $L < 2$. 
We leave the following as open question:
\begin{itemize}
    \item Can the rational homology vanishing statement in Theorem \ref{DZtheorem} be improved to integer homology vanishing statements?
    \item For $1/(d + 1) < \alpha < 1/d$ is $H_{2d - 1}(\widetilde Z; \Q) = 0$ and if it is can this be proved by Garland's method?
\end{itemize}

\section*{Acknowledgments}
The authors thank Alan Frieze and Peleg Michaeli for helpful discussion regarding one of the tail bound estimates for Lemma \ref{SymmetricCase}.
\begingroup

\endgroup
\end{document}